\documentclass[12pt,reqno]{amsart}

\newcommand\version{March 23, 2016}


\usepackage{amsmath,amsfonts,amsthm,amssymb,amsxtra,dsfont}

\usepackage{hyperref}



\setlength{\voffset}{-.7truein}
\setlength{\textheight}{8.8truein}
\setlength{\textwidth}{6.05truein}
\setlength{\hoffset}{-.7truein}


\newtheorem{theorem}{Theorem}[section]
\newtheorem{proposition}[theorem]{Proposition}
\newtheorem{lemma}[theorem]{Lemma}
\newtheorem{corollary}[theorem]{Corollary}
\newtheorem{conjecture}[theorem]{Conjecture}

\theoremstyle{definition}

\newtheorem{definition}[theorem]{Definition}

\theoremstyle{remark}

\newtheorem{remark}[theorem]{Remark}


\numberwithin{equation}{section}


\newcommand{\cB}{\mathcal{B}}

\newcommand{\C}{\mathbb{C}}

\newcommand{\cO}{\mathcal{O}}

\newcommand{\cR}{\mathcal{R}}
\newcommand{\cS}{\mathcal{S}}

\renewcommand{\epsilon}{\varepsilon}

\newcommand{\F}{\mathcal{F}}
\newcommand{\ii}{\infty}
\newcommand{\loc}{{\rm loc}}

\newcommand{\N}{\mathbb{N}}

\renewcommand{\phi}{\varphi}
\newcommand{\R}{\mathbb{R}}

\newcommand{\Sph}{\mathbb{S}}

\newcommand{\Z}{\mathbb{Z}}
\newcommand\1{{\ensuremath {\mathds 1} }}

\renewcommand{\hat}{\widehat}

\DeclareMathOperator{\re}{Re}

\DeclareMathOperator{\supp}{supp}


\begin{document}

\title[Stein--Tomas inequality --- \version]{Maximizers for the Stein--Tomas inequality}

\author{Rupert L. Frank}
\address{Rupert L. Frank, Mathematics 253-37, Caltech, Pasadena, CA 91125, USA}
\email{rlfrank@caltech.edu}

\author{Elliott H. Lieb}
\address{Elliott H. Lieb, Departments of Mathematics and Physics, Princeton
University, Princeton, NJ 08544, USA}
\email{lieb@princeton.edu}

\author{Julien Sabin}
\address{Julien Sabin, Laboratoire de Math\'ematiques d'Orsay, Univ. Paris-Sud, CNRS, Universit\'e
Paris-Saclay, 91405 Orsay, France.}
\email{Julien.Sabin@math.u-psud.fr}

\begin{abstract}
We give a necessary and sufficient condition for the precompactness of all optimizing sequences for the Stein--Tomas inequality. In particular, if a well-known conjecture about the optimal constant in the Strichartz inequality is true, we obtain the existence of an optimizer in the Stein--Tomas inequality. Our result is valid in any dimension. 
\end{abstract}


\maketitle

\renewcommand{\thefootnote}{${}$} \footnotetext{\copyright\, 2016 by the authors. This paper may be reproduced, in its entirety, for non-commercial purposes.}


\section{Main result}

A fundamental result in harmonic analysis is the Stein--Tomas theorem \cite{Stein-book-84,Tomas-75}, which states that if $f\in L^2(\Sph^{N-1})$, $N\geq 2$, then the inverse Fourier transform $\check f$ of $f\,d\omega$, with $d\omega$ the surface measure on $\Sph^{N-1}$, that is,
$$
\check f(x) := \frac{1}{(2\pi)^{N/2}} \int_{\Sph^{N-1}} e^{ix\cdot\omega} f(\omega)\,d\omega \,,
$$
belongs to $L^q(\R^N)$ with
\begin{equation}
\label{eq:q}
q := 2(N+1)/(N-1)
\end{equation}
and its $L^q(\R^N)$ norm is bounded by a constant times the $L^2(\Sph^{N-1})$ norm of $f$. Moreover, it is well known that the exponent $q$ is optimal (smallest possible) for this to hold for any $f\in L^2(\Sph^{N-1})$.

In this paper we are interested in the optimal Stein--Tomas constant,
$$
\mathcal R_N := \sup_{0\not\equiv f\in L^2(\Sph^{N-1})} \frac{\int_{\R^N} |\check f |^q\,dx}{\|f\|^q} \,,
$$
where $\|\cdot\|$ denotes the norm in $L^2(\Sph^{N-1})$. The value of $\mathcal R_N$ and optimizing functions are only known in dimension $N=3$ due to a remarkable work of Foschi \cite{Foschi-15}; see \cite{CarFosOliThi-15} for partial progress in $N=2$. Our main concern here is whether the supremum defining $\mathcal R_N$ is attained and, more generally, the description of maximizing sequences for $\mathcal R_N$. These questions were recently considered in fundamental papers by Christ and Shao, where the existence of a maximizer for $N=3$ \cite{ChrSha-12a} and $N=2$ \cite{Shao-15} was shown, as well as a precompactness result for maximizing sequences for $N=3$ \cite{ChrSha-12b}. What makes dimensions $N=2$ and $3$ special is that the exponent $q$ in \eqref{eq:q} is an even integer, so that one can multiply out $|\check f|^q$. Our results will be valid in any dimension.

Christ and Shao discovered that for the problem of existence of an maximizer for $\mathcal R_N$ a key role is played by the Strichartz inequality \cite{Strichartz-77}. The optimal constant in this inequality is
$$
\mathcal S_d := (2\pi)^{-(d+2)/d} \sup_{0\neq\psi\in L^2(\R^d)} \frac{\iint_{\R\times\R^d} |e^{it\Delta/2}\psi(x)|^{2+4/d}\,dx\,dt}{\|\psi\|^{2+4/d}} \,.
$$
(Here $\|\cdot\|$ denotes the norm in $L^2(\R^d)$.) Note that $2+4/d=q$ when $d=N-1$. The overall factor $(2\pi)^{-(d+2)/d}$ and the factor $1/2$ in front of the Laplacian are not important, but simplify some formulas below.

We say that a sequence $(f_n)\subset L^2(\Sph^{N-1})$ is precompact in $L^2(\Sph^{N-1})$ \emph{up to modulations} if there is a subsequence $(f_{n_k})$ and a sequence $(a_k)\subset\R^N$ such that $e^{-ia_k\cdot\omega} f_{n_k}$ converges in $L^2(\Sph^{N-1})$.

The following is our main result.

\begin{theorem}\label{main}
Let $N\geq 2$. If
\begin{equation}
\label{eq:gap}
\mathcal R_N > \frac{2^{q/2}}{\sqrt\pi} \frac{\Gamma(\frac{q+1}{2})}{\Gamma(\frac{q+2}{2})}\, \mathcal S_{N-1} \,,
\end{equation}
then maximizing sequences for $\mathcal R_N$, normalized in $L^2(\Sph^{N-1})$, are precompact in $L^2(\Sph^{N-1})$ up to modulations and, in particular, there is a maximizer for $\mathcal R_N$.
\end{theorem}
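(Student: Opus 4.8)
The strategy follows the concentration-compactness paradigm adapted to this scale-invariant setting, in the spirit of Christ--Shao but dimension-independent. Let $(f_n)$ be a maximizing sequence, normalized so that $\|f_n\| = 1$ and $\int_{\R^N} |\check f_n|^q\,dx \to \mathcal R_N$. The plan is to apply a profile decomposition to $(f_n)$: there is a well-developed ``linear profile decomposition'' for the extension operator of the sphere (or, via a change of variables near caps, for the Schrödinger propagator $e^{it\Delta/2}$ in $d = N-1$ dimensions), which writes, after passing to a subsequence,
\begin{equation*}
f_n = \sum_{j=1}^{J} e^{i a_n^j \cdot \omega}\, \phi^j + r_n^J
\end{equation*}
with the frequency/translation parameters $a_n^j \in \R^N$ pairwise divergent in the sense that $|a_n^j - a_n^{j'}| \to \infty$ for $j\neq j'$, with asymptotic orthogonality of $L^2$-norms $\|f_n\|^2 = \sum_{j} \|\phi^j\|^2 + \|r_n^J\|^2 + o(1)$, and with the remainder $r_n^J$ having vanishing extension norm in a suitable sense as $J\to\infty$. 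The crucial analytic input is an asymptotic decoupling of the $L^q$ norm of the extension, $\int |\check f_n|^q \to \sum_j \int |\check{\phi^j}|^q$, which uses that $q > 2$ strictly and the divergence of the modulation parameters; this is where the Brézis--Lieb lemma and a careful treatment of cross terms enter. (Only modulations $e^{ia_n\cdot\omega}$, not dilations or space translations, appear because the sphere is compact and the problem is genuinely only modulation-invariant.)

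Granting the decomposition, the argument is then a dichotomy. From the $\ell^2$-orthogonality $\sum_j \|\phi^j\|^2 \le 1$ and the superadditivity estimate one gets
\begin{equation*}
\mathcal R_N = \sum_j \int_{\R^N} |\check{\phi^j}|^q\,dx \le \sum_j \mathcal R_N \|\phi^j\|^q \le \mathcal R_N \Big(\sup_j \|\phi^j\|^2\Big)^{q/2 - 1} \sum_j \|\phi^j\|^2 \le \mathcal R_N \Big(\sup_j \|\phi^j\|^2\Big)^{q/2-1}.
\end{equation*}
Since $q/2 - 1 > 0$, this forces $\sup_j \|\phi^j\| = 1$, hence exactly one profile $\phi = \phi^{j_0}$ is nonzero, it has $\|\phi\| = 1$, all other profiles vanish, and the remainder vanishes in $L^2$; that is, $e^{-ia_n^{j_0}\cdot\omega} f_n \to \phi$ in $L^2(\Sph^{N-1})$, which is precompactness up to modulations and makes $\phi$ a maximizer. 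The only way the single-profile conclusion can fail is if in the profile decomposition the dominant behavior is carried not by a genuine $L^2(\Sph^{N-1})$ profile but by mass escaping to infinitely high frequency $|a_n^j| \to \infty$, i.e. a profile living ``at infinity.'' This is exactly the Schrödinger regime: a standard stationary-phase/rescaling analysis shows that such a concentrating bubble contributes to the extension $L^q$ norm at the rate governed by the Strichartz constant $\mathcal S_{N-1}$, with the explicit prefactor coming from integrating out the Gaussian curvature of the sphere — this is the origin of the constant $\frac{2^{q/2}}{\sqrt\pi}\frac{\Gamma(\frac{q+1}{2})}{\Gamma(\frac{q+2}{2})}$.

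Hence one obtains the refined estimate
\begin{equation*}
\mathcal R_N \le \max\left\{ \mathcal R_N \Big(\sup_{j:\ a^j \text{ finite}} \|\phi^j\|^2\Big)^{q/2-1},\ \ \frac{2^{q/2}}{\sqrt\pi}\frac{\Gamma(\frac{q+1}{2})}{\Gamma(\frac{q+2}{2})}\,\mathcal S_{N-1}\,\Big(\sup_{j:\ a^j \text{ infinite}} \|\phi^j\|^2\Big)^{q/2-1}\right\},
\end{equation*}
and assumption \eqref{eq:gap} rules out the second alternative, leaving only the first, which as above yields a single finite profile and precompactness. \textbf{The main obstacle} I anticipate is establishing the profile decomposition together with the $L^q$ decoupling and, crucially, the precise identification of the constant in the ``bubbling at infinity'' scenario: one must zoom in near the point of frequency concentration, approximate the sphere locally by a paraboloid, and compute exactly how the Strichartz extension norm in $\R \times \R^{N-1}$ emerges in the limit, keeping track of every Jacobian factor so that the numerical constant matches \eqref{eq:gap}. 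A secondary technical point is handling the cross terms in $|\sum_j \check{\phi^j} + \check r_n^J|^q$ for non-even $q$, which requires the fractional Brézis--Lieb-type bounds rather than a simple multinomial expansion.
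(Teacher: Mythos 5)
Your overall framework (concentration--compactness, superadditivity in $L^q$, dichotomy between a surviving profile and escape to infinity) is in the right spirit and shares the broad architecture of the paper, which is a two-step method of the missing mass combined with a Stein--Tomas refinement playing the role of inverse inequality. However, there are two genuine gaps, and they are precisely where the difficulty of the problem lies.

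First, the profile decomposition as you state it,
$f_n = \sum_j e^{i a_n^j\cdot\omega}\phi^j + r_n^J$,
carries only a modulation parameter. This cannot detect the loss of compactness by concentration. A sequence such as $\mathcal B_{\delta_n}(\widehat{\psi^+},\widehat{\psi^-})$ with $\delta_n\to 0$ has \emph{no} divergent modulation parameter (take $a_n^j=0$), yet it converges weakly to zero and its extension carries nontrivial $L^q$ mass in the limit. To see this in a profile decomposition you would have to incorporate a scale parameter $\delta_n$ and a rotation $R_n$ (the analogue of the $\cB_{R_n,\delta_n}$ operators), i.e.\ you need the almost--dilation symmetry that becomes exact in the paraboloid limit. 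Modulation invariance alone is not the only source of non-compactness, even though it is the only exact symmetry.

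Second, and more seriously, you misidentify the source of the constant $\frac{2^{q/2}}{\sqrt\pi}\frac{\Gamma(\frac{q+1}{2})}{\Gamma(\frac{q+2}{2})}$. It is not a Jacobian or curvature factor arising when a single bubble concentrates at one point and the sphere is approximated by its osculating paraboloid; that limit would give \emph{exactly} $\mathcal S_{N-1}$, with no extra factor, because the Strichartz constant is invariant under the parabolic rescaling and the Fourier normalization is already accounted for. The factor, which is strictly greater than $1$, is the \emph{enhancement} produced when a sequence concentrates simultaneously at a \emph{pair of antipodal points} with (conjugate) profiles. The two resulting pieces of the extension are $L^2$-orthogonal but do \emph{not} decouple in $L^q$: because they oscillate at the same rate and opposite phase, $\lim_{\lambda\to\infty}\int |e^{it\Delta/2}\psi^+ + e^{i\lambda x_N}e^{-it\Delta/2}\psi^-|^q$ equals an explicit averaged quantity $\Phi_q$ that exceeds the sum $\|e^{it\Delta/2}\psi^+\|_q^q + \|e^{-it\Delta/2}\psi^-\|_q^q$; the $\Gamma$-ratio is precisely $\Phi_q$ evaluated at $|\psi^+|=|\psi^-|$. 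This non-local interaction between the two concentration points means the standard Br\'ezis--Lieb asymptotic decoupling that underlies your plan \emph{fails}: the ``main piece'' is not a single profile, it is a superposition that never converges, and one needs a generalized Br\'ezis--Lieb argument that separates this oscillating two-bubble piece from a pointwise-vanishing remainder without requiring the main piece itself to converge. In particular, a single-bubble-at-infinity analysis would incorrectly produce the threshold $\mathcal S_{N-1}$, which is strictly smaller than the true threshold $\frac{2^{q/2}}{\sqrt\pi}\frac{\Gamma(\frac{q+1}{2})}{\Gamma(\frac{q+2}{2})}\mathcal S_{N-1}$, and would therefore assert precompactness of maximizing sequences that are in fact not precompact (the antipodal concentrating sequence is the counterexample showing \eqref{eq:gap} with $\geq$ always holds). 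Closing these gaps requires: (i) introducing the scale-plus-rotation parameter in the decomposition; (ii) proving the two-point concentration bound via a computation of $\Phi_q$; (iii) replacing Br\'ezis--Lieb by a version allowing a non-convergent main term.
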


Clearly, the optimization problem for $\mathcal R_N$ is invariant under modulations, so precompactness up to modulations is the best one can expect. Our theorem says that assumption \eqref{eq:gap} is sufficient for this. In fact, it is easy to see that \eqref{eq:gap} is also necessary for the precompactness modulo modulations of all maximizing sequences. We will comment on this in Remark \ref{gapnotstrict}, where we will also see that \eqref{eq:gap} holds with $\geq$ instead of $>$.

As we will argue below, in dimensions $N=2$ and $N=3$, the strict inequality \eqref{eq:gap} holds and so we recover the Christ--Shao results on the existence of optimizers \cite{ChrSha-12a,Shao-15} and precompactness in $N=3$ \cite{ChrSha-12b} and we obtain, for the first time, precompactness of maximizing sequences for $N=2$.

We believe, but cannot prove, that the strict inequality \eqref{eq:gap} holds in any dimension. To verify it, it seems natural to first compute $\mathcal S_{N-1}$ and then to use a perturbation argument to establish \eqref{eq:gap}. In fact, by a remarkable work of Foschi \cite{Foschi-07} (see also \cite{HunZha-06,BenBezCarHun-09}), the value of $\mathcal S_{N-1}$ is known for $N=2$ and $N=3$. We cite the following conjecture from \cite{Foschi-07}; see also \cite{HunZha-06}.

\begin{conjecture}\label{conj}
Let $d\geq 3$. Then the supremum defining $\mathcal S_d$ is attained for $\psi(x) = e^{-x^2/2}$, $x\in\R^d$.
\end{conjecture}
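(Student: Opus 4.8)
Since this is stated as a conjecture, what follows is a plan together with the point at which I expect it to stall, rather than a complete argument. The natural strategy is to adapt the two known proofs, due to Foschi and to Hundertmark--Zharnitsky/Bennett--Bez--Carbery--Hundertmark, in the even-exponent cases $d=1$ (where $q=6$) and $d=2$ (where $q=4$). When $q=2+4/d$ is an even integer, so is $k:=q/2=1+2/d$, and one can multiply out $|e^{it\Delta/2}\psi|^q=\big(e^{it\Delta/2}\psi\cdot\overline{e^{it\Delta/2}\psi}\big)^{k}$; inserting the kernel of $e^{it\Delta/2}$ and carrying out the $x$- and $t$-integrations --- which produce exactly two $\delta$-functions precisely because $d(k-1)=2$ --- yields the identity
$$
\iint_{\R\times\R^d}|e^{it\Delta/2}\psi(x)|^q\,dx\,dt = c_d\,\big\langle\psi^{\otimes k},\,P_d\,\psi^{\otimes k}\big\rangle_{L^2((\R^d)^{k})},
$$
where $P_d$ is the orthogonal projection (conditional expectation) onto functions on $(\R^d)^{k}$ depending only on $y_1+\cdots+y_k$ and $|y_1|^2+\cdots+|y_k|^2$. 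Since $P_d$ is a projection, the Strichartz quotient is bounded by $c_d\|\psi^{\otimes k}\|^2$, with equality iff $\psi^{\otimes k}\in\ran P_d$; as $g(x):=e^{-x^2/2}$ gives $g^{\otimes k}=e^{-(|y_1|^2+\cdots+|y_k|^2)/2}\in\ran P_d$, and solving this functional equation forces $\psi$ to be a Gaussian, $g$ is the unique maximizer up to symmetries. Equivalently, the heat-flow method exhibits $g$ as the only stationary profile of a quotient that is monotone along a suitable flow. Since the supremum defining $\mathcal S_d$ is attained in every dimension, the content of the conjecture for $d\geq3$ is to push one of these mechanisms past the obstruction $k\notin\Z$ and thereby identify the extremizer with $g$.

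I would attempt the flow method, as it is least tied to the multilinear identity. Flow a trial function by the harmonic-oscillator semigroup $\psi_s:=e^{-sH}\psi$, $H:=\tfrac12(-\Delta+|x|^2)$, whose ground state is $g$ (with $Hg=\tfrac d2 g$), so that $\psi_s/\|\psi_s\|\to g/\|g\|$ in $L^2$ generically, hence $Q(\psi_s)\to Q(g)$ for all $\psi$ by continuity of the Strichartz quotient $Q$ on $L^2\setminus\{0\}$. It then suffices to prove $Q(\psi_s)$ is nondecreasing; differentiating $\log Q(\psi_s)$ in $s$ and requiring monotonicity, this reduces to the space-time differential inequality
$$
\re\iint_{\R\times\R^d}|u|^{q-2}\,\bar u\,(\mathcal Lu)\,dx\,dt \;\le\; \mu\iint_{\R\times\R^d}|u|^q\,dx\,dt,
$$
where $u:=e^{it\Delta/2}\psi_s$ solves the free Schr\"odinger equation, $\mu:=\langle H\psi_s,\psi_s\rangle/\|\psi_s\|_2^2$, and $\mathcal L:=i\partial_t+\tfrac12\sum_j(x_j-it\partial_j)^2$ is the conjugate of $H$ by $e^{it\Delta/2}$ (using $e^{it\Delta/2}(-\Delta)=2i\partial_t\,e^{it\Delta/2}$ and $e^{it\Delta/2}x_j=(x_j-it\partial_j)e^{it\Delta/2}$). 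Integrating by parts in $(t,x)$ turns the left side into an integral of a local expression in $u$ and its first derivatives, and the goal is to reorganize it, using the equation for $u$, into something manifestly dominated by the right side; in the even case this is exactly the negative-semidefiniteness of a quadratic form on the $k$-fold tensor product. A partially equivalent viewpoint is the lens transform, which recasts the Strichartz inequality as a space-time estimate for the Mehler (harmonic-oscillator) propagator on the bounded interval $(-\tfrac\pi2,\tfrac\pi2)$, for which $g$ is the ground-state solution; sharp hypercontractivity of that semigroup is of the right flavour but has not been shown to yield the sharp Strichartz constant.

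The main obstacle is that for $k\notin\Z$ none of these reorganizations is available: the local integrand above contains the irrational power $q-2=4/d$ in factors of the form $|u|^{q-2}u\,\bar v$, and these do not assemble into a sum of squares (or into a quadratic form on a product space) by the manipulations --- multiplying out $|u|^q$, using the equation, integrating by parts --- that close the computation when $q$ is even. The same non-integrality removes the Hundertmark--Zharnitsky quadratic-form picture and, with it, any hope of a rearrangement proof: there is no analogue of the Riesz rearrangement inequality for a genuinely $L^q$ (as opposed to multilinear) functional, so a competing-symmetries scheme --- iterating a phase-space rotation against a pseudo-conformal inversion, whose common fixed points are exactly the Gaussians --- lacks its one-step monotonicity input. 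The least unpromising route around this, and the step I expect to absorb most of the work and to be the real difficulty, is a factorization/interpolation argument: express $\mathcal S_d$ in terms of the even-exponent constants $\mathcal S_1$ or $\mathcal S_2$ via a Brascamp--Lieb-type bound for the extension operators on a product of paraboloids together with a limiting argument in the number of factors, and then verify that the resulting estimate is \emph{saturated} by $g$ rather than merely bounding $\mathcal S_d$ from above by something strictly larger. Whether any such interpolation can be made sharp is, I believe, the crux of why the conjecture remains open.
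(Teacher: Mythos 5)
This statement is Conjecture \ref{conj}, which the paper does \emph{not} prove: it is quoted from Foschi and Hundertmark--Zharnitsky and enters only as a hypothesis in Proposition \ref{pert}, so there is no proof in the paper to compare yours against. You recognize this correctly, and your proposal is an accurate survey rather than a proof: the Hundertmark--Zharnitsky representation of the Strichartz functional as $c_d\langle\psi^{\otimes k},P_d\,\psi^{\otimes k}\rangle$ with $k=q/2=1+2/d$, valid exactly when $d\in\{1,2\}$ because $d(k-1)=2$ forces the two delta constraints, the Foschi and Bennett--Bez--Carbery--Hundertmark arguments in those cases, and the correct fact (Kunze, Shao, cited in the paper) that a maximizer for $\mathcal S_d$ exists in every $d$, so that the conjecture is purely about identifying it with the Gaussian. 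Your diagnosis of the obstruction --- that for $d\geq3$ the exponent $q=2+4/d$ is not an even integer, so neither the multilinear/projection identity nor the sum-of-squares reorganization behind the heat-flow proofs is available --- is exactly why the problem is open.

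Two caveats on the speculative part. First, the proposed monotonicity of the Strichartz quotient along the harmonic-oscillator semigroup $e^{-sH}$ is itself an unproven (and, as far as is known, unverified) statement of essentially the same difficulty as the conjecture; reducing to the displayed differential inequality does not advance matters unless that inequality can be closed, and you candidly say it cannot with current tools. Second, a small technical slip: with the paper's conventions one has $e^{it\Delta/2}x_j e^{-it\Delta/2}=x_j+it\partial_j$, not $x_j-it\partial_j$, so the conjugated operator $\mathcal L$ should be $i\partial_t+\tfrac12\sum_j(x_j+it\partial_j)^2$; this does not affect the structural point. In summary: no error of substance, but also no proof --- which is the correct state of affairs for this statement.
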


Assuming that this conjecture is true we can generalize an argument from \cite{ChrSha-12a,Shao-15} and obtain existence of a maximizer for the $\mathcal R_{d+1}$ problem.

\begin{proposition}\label{pert}
Let $N\geq 4$. If Conjecture \ref{conj} holds for $d=N-1$, then \eqref{eq:gap} holds and therefore the conclusions of Theorem \ref{main} hold.
\end{proposition}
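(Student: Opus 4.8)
The plan is to produce, for each small $\lambda>0$, a test function $f_\lambda\in L^2(\Sph^{N-1})$ whose Stein--Tomas quotient \emph{strictly} exceeds $C_{N,q}\mathcal{S}_{N-1}$, where $C_{N,q}:=\tfrac{2^{q/2}}{\sqrt\pi}\tfrac{\Gamma((q+1)/2)}{\Gamma((q+2)/2)}$ is the constant on the right of \eqref{eq:gap}; this is exactly \eqref{eq:gap}, and then Theorem \ref{main} applies. I would take $f_\lambda(\omega):=e^{-(1-\omega_N^2)/(2\lambda^2)}$, a smooth function on $\Sph^{N-1}$ which, as $\lambda\to0$, concentrates like a Gaussian of width $\lambda$ simultaneously at the two poles $\pm e_N$. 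Since $f_\lambda$ is real and depends only on $|\omega'|$ (writing $\omega=(\omega',\omega_N)$, $\omega'\in\R^{N-1}$), splitting the sphere into its two hemispheres gives $\check f_\lambda = 2\,\re\Phi_\lambda$, where $\Phi_\lambda$ is the inverse Fourier transform of the part of $f_\lambda\,d\omega$ carried by $\{\omega_N>0\}$ and $\Phi_\lambda(x',-x_N)=\overline{\Phi_\lambda(x',x_N)}$. Near the north pole $f_\lambda(\omega)\approx e^{-|\omega'|^2/(2\lambda^2)}$, so after the rescaling $\omega'=\lambda\eta$, $x'=\xi/\lambda$, $x_N=\tau/\lambda^2$ and the expansion $\sqrt{1-\lambda^2|\eta|^2}=1-\tfrac12\lambda^2|\eta|^2-\tfrac18\lambda^4|\eta|^4-\dots$,
$$\Phi_\lambda(\xi/\lambda,\tau/\lambda^2)=\tfrac{\lambda^{N-1}}{(2\pi)^{N/2}}\,e^{i\tau/\lambda^2}\big(\mathcal{E}_0g(\xi,\tau)+\lambda^2\mathcal{E}_1g(\xi,\tau)+O(\lambda^4)\big),\qquad g(\eta)=e^{-|\eta|^2/2},$$
where $\mathcal{E}_0g(\xi,\tau)=\int e^{i(\xi\cdot\eta-\tau|\eta|^2/2)}g\,d\eta$ is (a constant multiple of) a free Schr\"odinger evolution, and $\mathcal{E}_1g$ collects the two first corrections: the genuine curvature of the sphere beyond its osculating paraboloid, contributing $-\tfrac{i}{8}\tau|\eta|^4$ to the phase, and the deviation of the surface measure $(1-|\omega'|^2)^{-1/2}d\omega'$ from the flat measure, contributing a factor $1+\tfrac12\lambda^2|\eta|^2$; equivalently $\mathcal{E}_1 g=-\tfrac{i\tau}{8}\Delta_\xi^2\mathcal{E}_0g+\tfrac12(-\Delta_\xi)\mathcal{E}_0g$.

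Two elementary observations identify the leading value. First, the phase $e^{i\tau/\lambda^2}$ oscillates infinitely fast relative to $|\Phi_\lambda|$ as $\lambda\to0$, so by the averaging identity $\tfrac1\pi\int_0^\pi|\cos\theta|^q\,d\theta=\tfrac1{\sqrt\pi}\tfrac{\Gamma((q+1)/2)}{\Gamma((q+2)/2)}$ one gets $\int_{\R^N}|2\re\Phi_\lambda|^q = 2^q\,\tfrac1{\sqrt\pi}\tfrac{\Gamma((q+1)/2)}{\Gamma((q+2)/2)}\int_{\R^N}|\Phi_\lambda|^q\,(1+O(\lambda^4))$; combined with the fact that $\|f_\lambda\|^2$ is twice the $L^2$-mass of $f_\lambda$ on the northern hemisphere, this is precisely where the factor $2^{q/2}\,\tfrac1{\sqrt\pi}\tfrac{\Gamma((q+1)/2)}{\Gamma((q+2)/2)}=C_{N,q}$ comes from. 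Second, the remaining one-hemisphere quotient converges, by the rescaling above, to the (suitably normalised) Strichartz quotient of the Gaussian $g$ in dimension $d=N-1$ — and here Conjecture \ref{conj} enters: once $\mathcal{S}_{N-1}$ has been computed (equivalently, once we know the Gaussian is the Strichartz maximiser for $d=N-1$), this limit equals $\mathcal{S}_{N-1}$ itself. Hence $\lim_{\lambda\to0}\|\check f_\lambda\|_q^q/\|f_\lambda\|^q = C_{N,q}\,\mathcal{S}_{N-1}$, which is also the mechanism behind the ``$\geq$'' form of \eqref{eq:gap} discussed in Remark \ref{gapnotstrict}.

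The crux is the next order. Expanding, $\|\check f_\lambda\|_q^q/\|f_\lambda\|^q = C_{N,q}\mathcal{S}_{N-1}\,(1+B\lambda^2+o(\lambda^2))$, and I must show $B>0$; the constant $B$ coincides with the $O(\lambda^2)$-coefficient of the one-hemisphere quotient, namely
$$B \;=\; q\,\frac{\iint_{\R\times\R^{N-1}}|\mathcal{E}_0g|^{q-2}\,\re\!\big(\overline{\mathcal{E}_0g}\,\mathcal{E}_1g\big)}{\iint_{\R\times\R^{N-1}}|\mathcal{E}_0g|^q} \;-\; \frac{q}{2}\cdot\frac{\int_{\R^{N-1}}|\eta|^2 g(\eta)^2\,d\eta}{2\int_{\R^{N-1}} g(\eta)^2\,d\eta},$$
the second term coming from the inflation of $\|f_\lambda\|^2$ by the surface-measure factor. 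Since $g$ is Gaussian, $\mathcal{E}_0g(\xi,\tau)=\big(2\pi/(1+i\tau)\big)^{(N-1)/2}e^{-|\xi|^2/(2(1+i\tau))}$ is explicit, $\mathcal{E}_1g$ is obtained from it by the two differential operators above, and $|\mathcal{E}_0g|^{q-2}$ is, for each fixed $\tau$, a Gaussian in $\xi$ (in particular it never vanishes, so differentiation under the integral is legitimate); thus every integral above evaluates as a Gaussian moment in $\xi\in\R^{N-1}$ for fixed $\tau$, followed by a Beta-type integral in $\tau\in\R$. I would carry this out and verify $B>0$. Granting this, fixing $\lambda>0$ small enough that $1+B\lambda^2+o(\lambda^2)>1$ gives $\mathcal{R}_N\geq\|\check f_\lambda\|_q^q/\|f_\lambda\|^q>C_{N,q}\mathcal{S}_{N-1}$, i.e.\ \eqref{eq:gap}, whence the conclusions of Theorem \ref{main}. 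The main obstacle is precisely the verification that $B>0$: the computation is routine but long, and its sign is not forced by any soft argument — were $B$ to fail to be positive, one would have to optimise the trial function further (a non-Gaussian profile, or a different relative position/phase of the two bubbles) or push the expansion to higher order in $\lambda$.
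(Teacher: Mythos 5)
Your strategy is exactly the one the paper takes: build a two-bubble trial function concentrating at antipodal points, rescale parabolically, extract the factor $C_{N,q}=\frac{2^{q/2}}{\sqrt\pi}\frac{\Gamma((q+1)/2)}{\Gamma((q+2)/2)}$ via an averaging identity for the fast phase $e^{i\tau/\lambda^2}$, identify the leading order with $\mathcal{S}_{N-1}^G$ (which equals $\mathcal{S}_{N-1}$ under Conjecture~\ref{conj}), and then show the second-order coefficient is strictly positive. The trial function differs slightly (you take $e^{-(1-\omega_N^2)/(2\lambda^2)}$, which is an exact Gaussian in $\omega'$ on each hemisphere with no cutoff; the paper takes $\chi(\omega_N)e^{-(1-\omega_N)/\epsilon^2}+\chi(-\omega_N)e^{-(1+\omega_N)/\epsilon^2}$), but this changes only lower-order amplitude corrections and not the structure. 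Your formula for $B$ and the decomposition $\mathcal{E}_1g=-\tfrac{i\tau}{8}\Delta_\xi^2\mathcal{E}_0g+\tfrac12(-\Delta_\xi)\mathcal{E}_0g$ are set up correctly for your choice of trial function.

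However, there is a genuine gap, and you identify it yourself: you do not compute $B$ and do not verify $B>0$. That verification is the entire content of the proposition---without it, all you have re-derived is the non-strict inequality $\mathcal{R}_N\geq C_{N,q}\mathcal{S}_{N-1}$ already noted in Remark~\ref{gapnotstrict}. The paper carries out the Gaussian and Beta integrals in Lemma~\ref{singlebump} and finds $\partial_{\epsilon^2}\log(\|\check g_\epsilon\|_q^q/\|g_\epsilon\|^q)\big|_{\epsilon^2=0}=\tfrac14>0$ for its trial function; the arithmetic is the key finite computation, and it is precisely what you defer. (You remark that the sign ``is not forced by any soft argument'' --- in fact there is a useful soft observation you could have invoked: since the Gaussian is, under Conjecture~\ref{conj}, a \emph{critical point} of the Strichartz quotient, any $O(\lambda^2)$ perturbation of the amplitude profile alone contributes only at order $\lambda^4$. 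Thus the real-amplitude corrections $\tfrac12|\eta|^2$ and, in the paper's variant, $-\tfrac18|\eta|^4$ cancel between numerator and denominator, and $B$ is determined solely by the imaginary phase correction $-\tfrac{i\tau}{8}|\eta|^4$. This shows your $B$ and the paper's coefficient coincide at $\tfrac14$ despite the different trial functions, but one must still evaluate that one remaining integral.) You also assert without proof the $O(\lambda^4)$ error bound in the averaging step and differentiability under the integral, which the paper handles via a Fourier series argument with integration by parts and uniform $L^q$ bounds on $\phi_\epsilon$, $\partial_{x_N}\phi_\epsilon$, $\partial_{x_N}^2\phi_\epsilon$. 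These are secondary to the missing positivity check, but should be supplied.
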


In connection with Conjecture \ref{conj} we would like to mention that the existence and precompactness problem for the optimization corresponding to $\mathcal{S}_d$ was solved by Kunze \cite{Kunze-03} in $d=1$ and by Shao \cite{Shao-09} in $d\ge1$. As we will explain next, this problem is considerably easier than that for $\mathcal{R}_N$ since on the paraboloid $\{(\xi,\omega)\in\R^d\times\R:\ |\xi|^2=\omega\}$, no points have parallel normal vectors (which is also a consequence of the fact that the paraboloid can be written globally as a graph and has non-vanishing curvature). In fact, our proof technique allows one to simplify the proofs in \cite{Kunze-03,Shao-09}.

Let us discuss some of the challenges in proving Theorem \ref{main}. As in most optimization problems the key difficulty here is to find a weak limit of an optimizing sequence which is non-zero. There is an obvious way how a maximizing sequence can go weakly to zero, namely by modulations. However, potentially there is another way, namely by concentration and, in fact, the largest part of our proof is concerned with showing that concentration does \emph{not} occur. If a sequence would concentrate at a point, we could approximate the sphere close to this concentration point by a paraboloid and we are in the setting of the Strichartz inequality. (Note that the Strichartz inequality is invariant under dilations.) Therefore, if a maximizing sequence concentrates at a point, one could naively expect that the largest possible `energy' it can have is $\mathcal S_{N-1}$. What makes this problem interesting is that a maximizing sequence can do better than concentrating at a single point! Namely, it can concentrate at a \emph{pair of antipodal points}. What we will show is that the largest possible `energy' in this case is $\frac{2^{q/2}}{\sqrt\pi} \frac{\Gamma(\frac{q+1}{2})}{\Gamma(\frac{q+2}{2})}\, \mathcal S_{N-1}$ with a factor
$$
\frac{2^{q/2}}{\sqrt\pi} \frac{\Gamma(\frac{q+1}{2})}{\Gamma(\frac{q+2}{2})} >1 \,.
$$
From this and our assumption \eqref{eq:gap} we will deduce that maximizing sequences cannot concentrate at two antipodal points and therefore will be precompact.

The fact that a strict `energy' inequality leads to precompactness of minimizing sequences is frequently used in the calculus of variations, for instance, in the linear Schr\"odinger operator theory. In a non-linear context it seems to appear for the first time in the Br\'ezis--Nirenberg problem \cite[Lem. 1.2]{BreNir-83}. (Existence of minimizers, but not precompactness of minimizing sequences, under a strict `energy' inequality was shown earlier in the Yamabe problem \cite{Au}.) We emphasize that both in the Yamabe and in the Br\'ezis--Nirenberg problem one has to deal with the loss of compactness due to concentration around a point.

However, the fact that concentration at two points is better than concentration at a single point is a \emph{non-local phenomenon} and is a novel feature of the optimization problem $\mathcal R_{N}$. As far as we know, it does not appear in optimization problems related to Sobolev spaces (for instance, the Yamabe problem or the Br\'ezis--Nirenberg problem mentioned before -- not even in non-local versions of these problems) or in the optimization problem related to the Strichartz inequality. In order to deal with this non-local effect we have to modify existing strategies in the calculus of variations and we hope that our techniques will be useful in problems with a similar flavor. In particular, our method should allow to solve the case of a general manifold with positive Gauss curvature. In this case the role of antipodal points is played by pairs of points with opposite normal vectors. For earlier results in the case of general curves ($N=2$), but with pairs of points with opposite normal vectors excluded, we refer to \cite{Oliveira-14}.

The mechanism of antipodal concentration was discovered by Christ and Shao in \cite{ChrSha-12a}. In their analysis, however, the fact that $q$ is even plays a major role. First, it allows them to restrict their attention to non-negative functions, which eliminates the loss of compactness due to modulations. More importantly, however, it also allows them to restrict their attention to antipodally symmetric functions. In this way the concentration at antipodal points is built into their proof automatically and, for instance, it is trivial in their case that the concentration happens with the same profile at both points, whereas this is a non-trivial step in our proof.

In order to prove Theorem \ref{main} we use the \emph{method of the missing mass} (MMM) which was invented in \cite{Lieb-83b} and \cite[Lem.~1.2]{BreNir-83}; see also \cite{BreLie-84,FroLieLos-86} for early and \cite{FraLie-12,FraLie-15} for more recent applications of this method. The basic idea is to decompose a maximizing sequence into a main piece, which converges in a strong sense, and a remainder piece, which vanishes in a suitable sense. The goal of the decomposition is that each of the quantities involved in the maximization problem splits into a contribution of the main piece and the remainder piece, without any interaction between them. The crucial point is to not ignore the remainder piece (i.e., the missing mass), but to treat it as a potential optimizer. Because of the non-linear nature of the optimization problem, one can then conclude that the missing mass is either everything (which is impossible, since the main piece does not vanish) or nothing, which means that the maximizing sequence converges, in fact, strongly.

The MMM can deal both with exact symmetries (as in \cite{BreLie-84}) and with almost symmetries (as in \cite{BreNir-83}). One novelty of our work is that we need to apply the method twice, once to deal with the exact modulation symmetry (Proposition \ref{step1}) and once to deal with the almost dilation symmetry (Proposition \ref{step2}).

The method relies on two main ingredients which have to be verified in each problem. First, one needs to identify a main piece which does not vanish in the limit. This usually comes from a compactness theorem. In our case we prove a refinement of the Stein--Tomas inequality (Proposition \ref{steintomref}) which relies on a deep bilinear restriction theorem of Tao \cite{Tao-03}. Our strategy here is reminiscent of Tao's proof of what he calls the `inverse Strichartz theorem' \cite{Tao-09}. We feel that this approach is more direct than earlier approaches using $X_{p}$ spaces, which were used in connection with refined Strichartz inequalities (and were also an ingredient in \cite{Kunze-03,Shao-09}). Refinements of the Stein--Tomas inequality in terms of these spaces also play an important role in the works of Christ--Shao \cite{ChrSha-12a} and Shao \cite{Shao-15}.

The second ingredient in the MMM is the decoupling of the main and the remainder piece. While for a Hilbertian norm  involved in the maximization problem this follows simply from weak convergence, one usually uses almost everywhere convergence and the Br\'ezis--Lieb lemma \cite{Lieb-83b,BreLie-83} for an $L^q$ norm. Indeed, we are able to verify almost everywhere convergence in our setting by proving an analogue of the local smoothing property of the Schr\"odinger equation (Lemma \ref{locsmooth}). However, we need a generalization of the Br\'ezis--Lieb lemma (Lemma \ref{bl}) since in our second application of MMM the main piece will \emph{not} be convergent. Nevertheless, we will be able to separate its contribution from that of the remainder piece. The fact that the main piece is not convergent is ultimately a consequence of the non-local interaction between concentration points.

The outline of this paper is as follows. In Section \ref{sec:mmm} we present the overall strategy of our argument in more detail and explain how the MMM works. Section \ref{sec:bl} contains the new generalization of the Br\'ezis--Lieb lemma, Section \ref{sec:top} the results on almost everywhere convergence and Section \ref{sec:comp} (and Appendix \ref{sec:strichref}) the compactness result mentioned before. In Section \ref{sec:equal} we complete the computation of the compactness level by showing that, if concentration at antipodal points happens, then it is energetically favorable to have the same concentration profile on both points. Finally, Section \ref{sec:pert} is devoted to the proof of Proposition \ref{pert}.

\subsection*{Acknowledgement}

R.L.F. and J.S. would like to thank D. Oliveira e Silva and C. Thiele for the summer school `Sharp inequalities in harmonic analysis' in August 2015 which stimulated our interest in this project. Partially support by U.S. National Science Foundation DMS-1363432 (R.L.F.) and PHY-1265118 (E.H.L.) is acknowledged.


\section{Outline of the proof. Method of the missing mass}\label{sec:mmm}

In this section we explain the main steps in the proof of Theorem \ref{main}. In Proposition~\ref{step1} we will show that the conclusions of Theorem \ref{main} hold if $\frac{2^{q/2}}{\sqrt\pi} \frac{\Gamma(\frac{q+1}{2})}{\Gamma(\frac{q+2}{2})}\, \mathcal S_{N-1}$ on the right side of \eqref{eq:gap} is replace by a certain quantity $\mathcal R_N^*$, which is abstractly defined through certain sequences in $L^2(\Sph^{N-1})$ that converge weakly to zero. In a second step in Proposition \ref{step2} we will show that
\begin{equation}
\label{eq:step2}
\mathcal R_N^* = \tilde{\mathcal S}_{N-1} \,,
\end{equation}
where $\tilde{\mathcal S}_{N-1}$ is a quantity defined in terms of pairs of functions in $L^2(\R^{N-1})$ and is a generalization of the Strichartz constant $\mathcal S_{N-1}$. Finally, in Section \ref{sec:equal} we will show that
\begin{equation}
\label{eq:step3}
\tilde{\mathcal S}_d = \frac{2^{q/2}}{\sqrt\pi} \frac{\Gamma(\frac{q+1}{2})}{\Gamma(\frac{q+2}{2})}\, \mathcal S_d \,,
\end{equation}
which will complete the proof of Theorem \ref{main}.

We now present these steps in more detail.

\begin{definition}\label{mod}
Let $(f_n)\subset L^2(\Sph^{N-1})$. We write
$$
f_n\rightharpoonup_{\text{mod}} 0
$$
if for every sequence $(a_n)\subset\R^N$ one has
$$
e^{-ia_n\cdot\omega} f_n \rightharpoonup 0
\qquad\text{in}\ L^2(\Sph^{N-1}) \,.
$$
\end{definition}

(Here and in the following, we slightly abuse notation and write $e^{-ia_n\cdot\omega} f$ for the function $\omega\mapsto e^{-ia_n\cdot\omega} f(\omega)$.)

Define
$$
\mathcal R_N^* := \sup \left\{ \limsup \int_{\R^N} |\check f_n|^q\,dx :\ \|f_n\|=1 \,,\ f_n \rightharpoonup_{\text{mod}} 0 \right\}.
$$

\begin{proposition}\label{step1}
If
\begin{equation*}
\mathcal R_N > \mathcal R_N^* \,,
\end{equation*}
then maximizing sequences for $\mathcal R_N$, normalized in $L^2(\Sph^{N-1})$, are precompact in $L^2(\Sph^{N-1})$ up to modulations and, in particular, there is a maximizer for $\mathcal R_N$.
\end{proposition}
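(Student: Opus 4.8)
The plan is to use the method of the missing mass (MMM) as outlined in the introduction. Let $(f_n)$ be a maximizing sequence for $\mathcal R_N$ with $\|f_n\|=1$, so that $\int_{\R^N}|\check f_n|^q\,dx \to \mathcal R_N$. First I would pass to a subsequence and, using the modulation invariance, extract a nontrivial weak limit. More precisely, I claim there is a sequence $(a_n)\subset\R^N$ such that, along a subsequence, $g_n := e^{-ia_n\cdot\omega} f_n \rightharpoonup g$ in $L^2(\Sph^{N-1})$ with $g\not\equiv 0$. Indeed, if no such sequence existed, then by Definition \ref{mod} we would have $f_n\rightharpoonup_{\text{mod}} 0$, and then $\mathcal R_N = \limsup \int |\check f_n|^q \le \mathcal R_N^*$, contradicting the hypothesis $\mathcal R_N > \mathcal R_N^*$. (One should be a little careful that this dichotomy genuinely produces a \emph{single} modulating sequence rather than requiring a diagonal argument; this is a standard concentration-compactness maneuver, but extracting the nonvanishing weak limit is really the crux where the refined Stein--Tomas inequality of Proposition \ref{steintomref} — mentioned in the introduction — does the work, since $\|g_n\|$ bounded together with $\int|\check g_n|^q$ bounded below forces a modulation along which mass survives weakly.) Note $\check g_n(x) = \check f_n(x + a_n)$, so $\int|\check g_n|^q = \int|\check f_n|^q \to \mathcal R_N$, i.e. $(g_n)$ is again a maximizing sequence.

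Next I would set up the splitting. Write $g_n = g + r_n$ where $r_n := g_n - g \rightharpoonup 0$ in $L^2(\Sph^{N-1})$. By the Hilbert space structure, $1 = \|g_n\|^2 = \|g\|^2 + \|r_n\|^2 + o(1)$, so $\|r_n\|^2 \to 1 - \|g\|^2 =: 1 - \mu$ with $0 < \mu \le 1$. For the $L^q$ term I want the Br\'ezis--Lieb-type decoupling
$$
\int_{\R^N} |\check g_n|^q\,dx = \int_{\R^N}|\check g|^q\,dx + \int_{\R^N}|\check r_n|^q\,dx + o(1) \,.
$$
This requires a.e.\ convergence of $\check g_n \to \check g$, which is supplied by the local smoothing Lemma \ref{locsmooth} (again as advertised in the introduction: the extension operator maps $L^2(\Sph^{N-1})$ into an $H^{1/2}_{\loc}$-type space, hence weak convergence upgrades to strong local convergence and, along a further subsequence, to a.e.\ convergence), combined with the Br\'ezis--Lieb lemma. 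Taking the limit gives
$$
\mathcal R_N = \int_{\R^N}|\check g|^q\,dx + \limsup \int_{\R^N}|\check r_n|^q\,dx \,.
$$

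Now comes the MMM conclusion. For the main piece, by the definition of $\mathcal R_N$ as a supremum, $\int|\check g|^q \le \mathcal R_N \|g\|^q = \mathcal R_N\, \mu^{q/2}$. For the remainder piece: I claim $r_n \rightharpoonup_{\text{mod}} 0$. Indeed, for any sequence $(b_n)$, $e^{-ib_n\cdot\omega} r_n = e^{-ib_n\cdot\omega} g_n - e^{-ib_n\cdot\omega} g$; if this did not tend to $0$ weakly along a subsequence, then some subsequence of $e^{-ib_n\cdot\omega} g_n$ would have a nonzero weak limit $h$, but testing against a fixed $\phi\in L^2$ shows the map $\phi\mapsto$ (weak limit) is controlled, and in fact one checks $|b_n|\to\infty$ is forced (otherwise $b_n$ subconverges and $e^{-ib_n\cdot\omega}g$ converges strongly, reducing to $g_n\rightharpoonup g$, consistent with $r_n\rightharpoonup 0$), while if $|b_n|\to\infty$ then $e^{-ib_n\cdot\omega} g \rightharpoonup 0$ by Riemann--Lebesgue on the sphere and $e^{-ib_n\cdot\omega} g_n = e^{-i(b_n+a_n)\cdot\omega} f_n \rightharpoonup 0$ by a similar argument would contradict our maximal choice of the modulation $a_n$. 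Granting $r_n \rightharpoonup_{\text{mod}} 0$, the definition of $\mathcal R_N^*$ gives $\limsup\int|\check r_n|^q\,dx \le \mathcal R_N^* \limsup\|r_n\|^q = \mathcal R_N^*(1-\mu)^{q/2}$. Combining,
$$
\mathcal R_N \le \mathcal R_N\,\mu^{q/2} + \mathcal R_N^*(1-\mu)^{q/2} \,.
$$
Since $q > 2$, the function $t\mapsto t^{q/2}$ is strictly convex with $t^{q/2} < t$ for $t\in(0,1)$, so $\mu^{q/2} + (1-\mu)^{q/2} \le 1$ with equality iff $\mu\in\{0,1\}$, and moreover $\mathcal R_N^* < \mathcal R_N$. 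Hence the right side is $\le \mathcal R_N$, with equality forcing $\mu = 1$ (the case $\mu=0$ is excluded since $g\not\equiv 0$) — unless $\mathcal R_N^* = 0$, in which case one argues directly. Therefore $\mu = 1$, i.e. $\|g\| = 1 = \lim\|g_n\|$, and combined with $g_n\rightharpoonup g$ this yields $g_n \to g$ strongly in $L^2(\Sph^{N-1})$; unwinding, $e^{-ia_n\cdot\omega} f_{n} \to g$, which is precompactness up to modulations, and $g$ is a maximizer.

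The main obstacle I expect is the first step: producing a nonvanishing weak limit after modulation, i.e.\ ruling out the escape-to-oblivion scenario by a genuine compactness mechanism. This is exactly where the refined Stein--Tomas inequality (Proposition \ref{steintomref}, resting on Tao's bilinear restriction theorem) is indispensable — plain Stein--Tomas gives no profile decomposition — and it is also the step where one must be careful that the extracted modulation sequence is consistent across the later arguments (so that $r_n\rightharpoonup_{\text{mod}}0$). The a.e.\ convergence needed for Br\'ezis--Lieb is a secondary technical point handled by Lemma \ref{locsmooth}, and the final convexity argument is routine.
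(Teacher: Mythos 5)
Your overall MMM skeleton matches the paper's proof, but there is a genuine gap in the remainder-piece bound, and you have misplaced two of the key technical tools.

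The central flaw is your claim that $r_n\rightharpoonup_{\text{mod}}0$. This is false in general. Consider $f_n = g + e^{ic_n\cdot\omega}h$ with fixed $g,h\neq 0$ and $|c_n|\to\infty$: taking $a_n=0$ gives $f_n\rightharpoonup g$ and $r_n = e^{ic_n\cdot\omega}h$, yet $e^{-ic_n\cdot\omega}r_n = h\not\rightharpoonup 0$, so $r_n\not\rightharpoonup_{\text{mod}}0$. Your supporting argument invokes a ``maximal choice of the modulation $a_n$,'' but no such choice was made (and there is no natural maximality to appeal to); and when $|b_n|\to\infty$ you cannot conclude $e^{-ib_n\cdot\omega}g_n\rightharpoonup 0$, since a second nontrivial profile may emerge along $b_n$. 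The fix, which is what the paper does, is to bound the remainder by the \emph{plain} Stein--Tomas inequality: $\|\check r_n\|_q^q\leq \mathcal R_N\|r_n\|^q$, giving $\mu\leq\mathcal R_N m^{q/2}$. This weaker bound still closes the MMM loop via $(1-t)^{q/2}\leq 1-t^{q/2}$, yielding $g$ a maximizer and $m=0$; the hypothesis $\mathcal R_N>\mathcal R_N^*$ is consumed entirely in the first step of producing $g\not\equiv 0$.

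Two secondary remarks, both about misattributing where the heavy machinery enters. First, the nonvanishing weak limit in Proposition~\ref{step1} does \emph{not} rest on the refined Stein--Tomas inequality (Proposition~\ref{steintomref}); it follows immediately from the negation of Definition~\ref{mod} once $\mathcal R_N>\mathcal R_N^*$ forces $f_n\not\rightharpoonup_{\text{mod}}0$. The refined inequality is invoked only in Proposition~\ref{step2}, where the abstract quantity $\mathcal R_N^*$ must be identified with $\tilde{\mathcal S}_{N-1}$. Second, the a.e.\ convergence needed for Br\'ezis--Lieb here does not require Lemma~\ref{locsmooth}: since $\omega\mapsto e^{-ix\cdot\omega}\in L^2(\Sph^{N-1})$ for each fixed $x$, the weak convergence $r_n\rightharpoonup 0$ already gives $\check r_n(x)\to 0$ pointwise. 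Lemma~\ref{locsmooth} is needed only later, for the operators $\mathcal T_{\delta_n}$ in Proposition~\ref{step2}, where the phase depends on $\delta_n$ and the simple pairing argument no longer applies.
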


\begin{proof}
Let $(f_n)\subset L^2(\Sph^{N-1})$ be a maximizing sequence with $\|f_n\|=1$. Since
$$
\lim \int_{\R^N} |\check f_n|^q\,dx = \mathcal R_N >\mathcal R_N^* \,,
$$
we infer that $f_n \not \rightharpoonup_{\text{mod}} 0$. That is, there is an $h\in L^2(\Sph^{N-1})$ and a sequence $(a_n)\subset\R^N$ such that $\limsup_{n\to\infty} \left| \int h e^{-ia_n\cdot\omega} f_n \,d\omega\right|>0$. After passing to a subsequence we may assume that $\inf_n \left| \int h e^{-ia_n\cdot\omega} f_n \,d\omega\right|>0$. By weak compactness, after passing to another subsequence, we may assume that $e^{-ia_n\cdot\omega} f_n\rightharpoonup g$ in $L^2(\Sph^{N-1})$. Then $\int h e^{-ia_n\cdot\omega} f_n \,d\omega\to \int h g\,d\omega$ and this is non-zero, so we conclude that $g\not\equiv 0$.

Let us denote $r_n:= e^{-ia_n\cdot\omega} f_n - g$. Then $r_n\rightharpoonup 0$ in $L^2(\Sph^{N-1})$ and therefore
$$
m:= \lim_{n\to\infty} \|r_n\|^2
\qquad\text{exists and satisfies}\ 1=\|g\|^2 + m \,.
$$
Moreover, since $e^{ix\cdot\omega}\in L^2(\Sph^{N-1})$, weak convergence implies that $\check r_n\to 0$ pointwise and therefore, by the Br\'ezis--Lieb lemma \cite{Lieb-83b,BreLie-83},
$$
\mu := \lim_{n\to\infty} \| \check r_n\|_q^q
\qquad\text{exists and satisfies}\ \mathcal R_N=\|\check g\|^q_q + \mu \,.
$$
Since $\| \check r_n\|_q^q \leq \mathcal R_N \|r_n\|^q$, we have $\mu\leq\mathcal R_N m^{q/2}$ and therefore
$$
\mathcal R_N = \|\check g\|^q_q + \mu \leq \|\check g\|^q_q + \mathcal R_N m^{q/2} = \|\check g\|^q_q + \mathcal R_N (1-\|g\|^2)^{q/2} \leq \|\check g\|^q_q + \mathcal R_N -\mathcal R_N \|g\|^q \,, 
$$
where we used the elementary inequality $(1-t)^{q/2}\leq 1-t^{q/2}$ for $t\in[0,1]$. Thus, we have shown that $0\leq \|\check g\|^q_q -\mathcal R_N \|g\|^q$, which means that $g$ is a maximizer (recall that $g\not\equiv 0$) and that equality must hold everywhere. Since the elementary inequality is strict unless $t\in\{0,1\}$, we conclude that $\|g\|^2=1$. Thus, $m=0$, which means that $e^{-ia_n\cdot\omega} f_n$ converges to $g$ strongly in $L^2(\Sph^{N-1})$. This completes the proof.
\end{proof}

This proposition reduces the proof of our main theorem to showing that
$$
\mathcal R_N^* = \frac{2^{q/2}}{\sqrt\pi} \frac{\Gamma(\frac{q+1}{2})}{\Gamma(\frac{q+2}{2})}\, \mathcal S_{N-1} \,,
$$
which we will verify in two steps. Let
\begin{align*}
& \tilde{\mathcal S}_d := (2\pi)^{-(d+2)/d} \\
& \quad \times \!\!\!\!\sup_{(0,0)\neq(\psi^+,\psi^-)\in L^2(\R^d)^2} \frac{\lim_{\lambda\to\infty} \iint_{\R\times\R^d} \left| e^{it\Delta/2}\psi^+(x) + e^{i\lambda x_N} e^{-it\Delta/2}\psi^-(x)\right|^{2+4/d}\,dx\,dt}{\left( \|\psi^+\|^2 + \|\psi^-\|^2\right)^{1+2/d}}.
\end{align*}
It is easy to see that the limit $\lambda\to\infty$ exists. We discuss this in some more detail before Lemma \ref{equal}.

Our next goal is to prove equality \eqref{eq:step2}. Intuitively, this equality says that for the computation of $\mathcal R_N^*$ we only need to consider sequences which concentrate on a pair of antipodal points. Approximating the sphere near the concentration points by a paraboloid, we arrive at $\tilde{\mathcal S}_{N-1}$. (The factor of $(2\pi)^{-(d+2)/d}$ comes from the normalization of the Fourier transform.)

In order to make this intuition precise we have to quantify the notion of concentration. We will introduce a family of maps $\mathcal B_{R,\delta}$ with $R\in\cO(N)$ and $\delta>0$ which identifies pairs of functions on $L^2(\R^{N-1})$ with a function on $L^2(\Sph^{N-1})$. The orthogonal matrix $R\in\cO(N)$ will determine the equator along which we cut the function in $L^2(\Sph^{N-1})$ into two pieces. The parameter $\delta>0$ corresponds to a scaling in $L^2(\R^{N-1})$.

We begin with the case $R=\text{Id}$, in which the equator along which we cut is the standard equator. For $\phi^+,\phi^-\in L^2(\R^{N-1})$ and $\delta>0$ we define a function $\mathcal B_{\delta}(\phi^+,\phi^-)\in L^2(\Sph^{N-1})$ by
\begin{equation}\label{eq:defi-Bn}
\begin{cases}
    \displaystyle  \mathcal B_{\delta}(\phi^+,\phi^-)\left(\frac{\xi}{\sqrt{1+\xi^2}},\frac{1}{\sqrt{1+\xi^2}}  \right) := (1+\xi^2)^{N/4}\,\delta^{-(N-1)/2}\, \phi^+(\xi/\delta),\\
    \displaystyle  \mathcal B_{\delta}(\phi^+,\phi^-)\left(\frac{\xi}{\sqrt{1+\xi^2}},\frac{-1}{\sqrt{1+\xi^2}}\right) := (1+\xi^2)^{N/4} \, \delta^{-(N-1)/2}\, \phi^-(\xi/\delta)
  \end{cases} 
\end{equation}
for $\xi\in\R^{N-1}$. (It is inessential that $B_\delta(\phi^+,\phi^-)$ is not defined on the set $\{\omega\in\Sph^{N-1}:\ \omega_N=0\}$ of measure zero.) A simple change of variables shows that
\begin{equation}
\label{eq:bunitary}
\|\mathcal B_{\delta}(\phi^+,\phi^-)\|^2 = \|\phi^+\|^2 + \|\phi^-\|^2 \,.
\end{equation}
The map $\mathcal B_\delta$ will be $\mathcal B_{R,\delta}$ with $R=\text{Id}$. Now for any $R\in\cO(N)$,  $\phi^+,\phi^-\in L^2(\R^{N-1})$, and $\delta>0$ we define a function $\mathcal B_{R,\delta}(\phi^+,\phi^-)\in L^2(\Sph^{N-1})$ by
\begin{equation}
\label{eq:defi-B}
\mathcal B_{R,\delta}(\phi^+,\phi^-)(\omega) = \mathcal B_{\delta}(\phi^+,\phi^-)(R^{-1}\omega) \,.
\end{equation}
Since $\cB_\delta$ concentrates as $\delta\to 0$ around the north pole $(0,\ldots,0,1)$ and the south pole $(0,\ldots,0,-1)$, $\cB_{R,\delta}$ concentrates around $R(0,\ldots,0,1)$ and $R(0,\ldots,0,-1)$ as $\delta\to0$. 

\begin{definition}\label{conc}
Let $(f_n)\subset L^2(\Sph^{N-1})$. We write
$$
f_n\rightharpoonup_{\text{conc}} 0
$$
if for all sequences $(a_n)\subset\R^N$, $(R_n)\subset\cO(N)$ and $(\delta_n)\subset (0,\infty)$ with $\sup\delta_n<\infty$ one has
$$
\mathcal B_{R_n,\delta_n}^{-1} \left( e^{-ia_n\cdot\omega} f_n \right) \rightharpoonup (0,0)
\qquad\text{in}\ L^2(\R^{N-1})\times L^2(\R^{N-1}) \,.
$$
\end{definition}

(Recall that, with our slight abuse of notation, $e^{-ia_n\cdot\omega} f$ denotes the function $\omega\mapsto e^{-ia_n\cdot\omega} f(\omega)$.)

Let us briefly comment on this definition. At first sight it might look unnecessary to include a sequence of orthogonal maps $(R_n)$ in this definition since the space $\cO(N)$ is compact and hence, up to a subsequence, $(R_n)$ will converge to a fixed orthogonal map. However, if $\delta_n\to0$, the sphere gets `blown-up' and the maps $(R_n)$ might move a point on the sphere on a distance $1/\delta_n$ when looking around the concentration point at the scale $1/\delta_n$. As a consequence, the $(R_n)$ play the role of the $v$-translations (modulation symmetry) in the symmetries of the Strichartz inequality (see the appendix of \cite{Tao-09}). The importance of keeping these rotations will become clear in the proof of Lemma \ref{lem:Linfty-estimate-Tao}. Let us also remark that the analogue of our $(a_n)$ are $(t_n,x_n)$-translations in the Strichartz case.

Our definition of the convergence $f_n\rightharpoonup_{\text{conc}} 0$ is specific to the sphere: we used that any rotation stabilizes the sphere. If one tries to adapt our approach to a general compact manifold with positive Gauss curvature one probably needs to work with local versions of the $\cB$ operators.

\medskip

We introduce two auxiliary functions $\zeta_1,\zeta_2$ on $[0,\infty)$ by
$$
\zeta_1(k) = \frac{1}{\sqrt{1+k^2}} \,,
\qquad
\zeta_2(k) = \frac{2}{k^2} \left( 1- \frac{1}{\sqrt{1+k^2}} \right).
$$
For $\psi\in L^2(\R^{N-1})$ we define with $x=(x',x_N)\in\R^{N-1}\times\R$
\begin{equation}
\label{eq:deft}
\left(\mathcal T_\delta\psi\right)(x) := \frac{1}{(2\pi)^{(N-1)/2}} \int_{\R^{N-1}} \hat\psi(\xi) e^{i\left( \xi\cdot x' \zeta_1(\delta|\xi|) - \frac12 \xi^2 x_N \zeta_2(\delta|\xi|)\right)} \,\frac{d\xi}{(1+\delta^2\xi^2)^{N/4}} \,.
\end{equation}
The operators $\mathcal T_\delta$ arise naturally in this context since for any pair of functions $\psi^+,\psi^-\in L^2(\R^{N-1})$ and any $\delta>0$, setting
$$
f := \mathcal B_\delta (\widehat{\psi^+},\widehat{\psi^-}) \,,
$$
we find
\begin{align}
\label{eq:bt}
\delta^{-(N-1)/2} \check f(x'/\delta,x_N/\delta^2) = (2\pi)^{-1/2} & \left( e^{ix_N/\delta^2} \left( \mathcal T_\delta \psi^+\right)(x) + \, e^{-ix_N/\delta^2} \left( \mathcal T_\delta \psi^-\right)(x',-x_N) \right).
\end{align}
This follows by a simple change of variables.

We are now able to carry out the second step in the proof of Theorem \ref{main}, which is a variation of the argument used to prove Proposition \ref{step1} combined with a compactness theorem for the $f_n\rightharpoonup_{\text{conc}}0$ convergence (Corollary \ref{concentration}) and two convergence theorems for the operators $\mathcal T_\delta$ (Propositions \ref{tconv} and \ref{aeconv}).

\begin{proposition}\label{step2}
$\mathcal R_N^* = \tilde{\mathcal S}_{N-1}$
\end{proposition}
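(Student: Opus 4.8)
The plan is to prove the two inequalities $\mathcal R_N^* \leq \tilde{\mathcal S}_{N-1}$ and $\mathcal R_N^* \geq \tilde{\mathcal S}_{N-1}$ separately, following the structure of the proof of Proposition~\ref{step1} for the first one and a direct test-function construction for the second.

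\textbf{The lower bound $\mathcal R_N^* \geq \tilde{\mathcal S}_{N-1}$.} This direction should be relatively direct. Fix any pair $(\psi^+,\psi^-)\in L^2(\R^{N-1})^2$, not both zero, and let $\lambda_n\to\infty$. Using the relation \eqref{eq:bt} with $\delta_n = \lambda_n^{-1/2}$ (so that $\delta_n^{-2} = \lambda_n$), set $f_n := \mathcal B_{\delta_n}(\widehat{\psi^+},\widehat{\psi^-})$. By \eqref{eq:bunitary} we have $\|f_n\|^2 = \|\psi^+\|^2 + \|\psi^-\|^2$, which we normalize to $1$. A change of variables in $\|\check f_n\|_q^q$ turns it, via \eqref{eq:bt}, into an integral of $|e^{ix_N/\delta_n^2}(\mathcal T_{\delta_n}\psi^+)(x) + e^{-ix_N/\delta_n^2}(\mathcal T_{\delta_n}\psi^-)(x',-x_N)|^q$; using the convergence theorem for $\mathcal T_\delta$ (Proposition~\ref{tconv}) one replaces $\mathcal T_{\delta_n}$ by the free Schr\"odinger propagators $e^{\pm it\Delta/2}$ and recognizes the resulting limit as exactly the numerator in the definition of $\tilde{\mathcal S}_{N-1}$ (after the change of variables $x_N\mapsto -x_N$ in the second term and relabeling). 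One must also check that $f_n \rightharpoonup_{\text{mod}} 0$: for any $(a_n)\subset\R^N$, the functions $e^{-ia_n\cdot\omega}f_n$ concentrate on the two poles, so testing against a fixed $h\in L^2(\Sph^{N-1})$ gives something tending to zero (the mass escapes to high frequencies in the blown-up picture). Taking the supremum over $(\psi^+,\psi^-)$ yields $\mathcal R_N^* \geq \tilde{\mathcal S}_{N-1}$.

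\textbf{The upper bound $\mathcal R_N^* \leq \tilde{\mathcal S}_{N-1}$.} This is the harder direction and is where the second application of the method of the missing mass enters. Start with a sequence $(f_n)$ with $\|f_n\|=1$, $f_n\rightharpoonup_{\text{mod}}0$, and $\lim\|\check f_n\|_q^q = \mathcal R_N^*$ (or close to it). If in addition $f_n \rightharpoonup_{\text{conc}} 0$, then by the compactness theorem (Corollary~\ref{concentration}) $\check f_n \to 0$ in $L^q(\R^N)$ — i.e., no mass survives even after rescaling — which forces $\mathcal R_N^* = 0$, a trivial case. Otherwise, along a subsequence there are $(a_n)$, $(R_n)\subset\cO(N)$, $(\delta_n)$ bounded, such that $\mathcal B_{R_n,\delta_n}^{-1}(e^{-ia_n\cdot\omega}f_n) \rightharpoonup (\phi^+,\phi^-) \neq (0,0)$ in $L^2(\R^{N-1})^2$. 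Decompose $e^{-ia_n\cdot\omega}f_n = \mathcal B_{R_n,\delta_n}(\phi^+,\phi^-) + \rho_n$ where $\mathcal B_{R_n,\delta_n}^{-1}\rho_n \rightharpoonup (0,0)$. One now needs: (i) the $L^2$ splitting $1 = \|\phi^+\|^2 + \|\phi^-\|^2 + m$ with $m = \lim\|\rho_n\|^2$ (which uses that $\mathcal B_{R_n,\delta_n}$ is essentially an isometry and that the cross term vanishes by weak convergence); (ii) an a.e.-convergence statement for the rescaled functions, provided by the local smoothing lemma (Lemma~\ref{locsmooth}) together with the a.e.-convergence theorem for $\mathcal T_\delta$ (Proposition~\ref{aeconv}); and (iii) the generalized Br\'ezis--Lieb lemma (Lemma~\ref{bl}) to split $\mathcal R_N^* = \lim\|\check f_n\|_q^q$ into the contribution of the main piece $\mathcal B_{R_n,\delta_n}(\phi^+,\phi^-)$ and that of $\rho_n$ — crucially \emph{without} assuming the main piece converges in $L^q$, since $\mathcal B_{R_n,\delta_n}(\phi^+,\phi^-)$ still carries its own $\delta_n$-dependence. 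The contribution of the main piece is, by \eqref{eq:bt} and Proposition~\ref{tconv}, bounded by $\tilde{\mathcal S}_{N-1}\,(\|\phi^+\|^2+\|\phi^-\|^2)^{1+2/d}$, while the remainder contributes at most $\mathcal R_N^*\, m^{q/2}$. Exactly as in Proposition~\ref{step1}, one then writes
\[
\mathcal R_N^* \leq \tilde{\mathcal S}_{N-1}\,(1-m)^{q/2} + \mathcal R_N^*\, m^{q/2},
\]
and if $\tilde{\mathcal S}_{N-1} \leq \mathcal R_N^*$ the elementary inequality $(1-m)^{q/2}\leq 1-m^{q/2}$ combined with $\tilde{\mathcal S}_{N-1}\le\mathcal R_N^*$ forces $m\in\{0,1\}$; since the main piece is nonzero, $m=1$ is impossible, so $m=0$ and all inequalities are saturated, giving $\mathcal R_N^* \leq \tilde{\mathcal S}_{N-1}$. (If instead $\tilde{\mathcal S}_{N-1} > \mathcal R_N^*$ there is nothing to prove for the upper bound.)

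\textbf{Main obstacle.} The delicate point is step (iii): the ordinary Br\'ezis--Lieb lemma does not apply because the main piece $\mathcal B_{R_n,\delta_n}(\phi^+,\phi^-)$ does not converge in any strong sense — its mass lives at scale $\delta_n$ and, when $\delta_n\to 0$, at the two antipodal points $R_n(0,\dots,\pm1)$. One must therefore invoke the generalized version (Lemma~\ref{bl}) and verify its hypotheses, which is precisely where almost-everywhere convergence of the rescaled remainder (via the local smoothing Lemma~\ref{locsmooth} and Proposition~\ref{aeconv}) is needed. A second subtlety is bookkeeping the interaction between the two poles: after rescaling, the main piece becomes $e^{ix_N/\delta_n^2}(\mathcal T_{\delta_n}\psi^+) + e^{-ix_N/\delta_n^2}(\mathcal T_{\delta_n}\psi^-)(x',-x_N)$ with the two rapidly oscillating phases $e^{\pm ix_N/\delta_n^2}$, and one has to argue — this is where the $(R_n)$ in Definition~\ref{conc} matter, cf.\ the remark before Lemma~\ref{lem:Linfty-estimate-Tao} — that the supremum over configurations reduces exactly to the two-profile Strichartz-type quantity $\tilde{\mathcal S}_{N-1}$ and not something larger. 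Finally one should record that when $\delta_n$ stays bounded away from $0$ the main piece is genuinely compact and the argument degenerates to the easier single-scale case, so the relevant regime is indeed $\delta_n\to 0$.
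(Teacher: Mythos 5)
Your proposal follows essentially the same route as the paper: the lower bound by testing against $\mathcal B_{\delta_n}(\widehat{\psi^+},\widehat{\psi^-})$ with $\delta_n\to0$ and invoking Proposition~\ref{tconv}; the upper bound via Corollary~\ref{concentration} to extract a nonzero concentration profile, decomposition into a main piece $p_n=\mathcal B_{\delta_n}(\widehat{\psi^+},\widehat{\psi^-})$ and a remainder $r_n$, the $L^2$ splitting via the isometry~\eqref{eq:bunitary}, and the $L^q$ splitting via the generalized Br\'ezis--Lieb Lemma~\ref{bl} (whose hypotheses are checked using the a.e.\ convergence of Proposition~\ref{aeconv} and the $L^q$ convergence of Proposition~\ref{tconv}). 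Your identification of the obstacle — that the main piece does not converge, which is precisely why the generalized Br\'ezis--Lieb lemma is needed rather than the classical one — is correct and is exactly what the paper emphasizes.

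The only substantive difference is cosmetic bookkeeping at the end of the upper bound. The paper keeps the limit $P=\lim_n\|\pi_n\|_q^q$ as a separate quantity and derives the chain $\mathcal R_N^*(\|\psi^+\|^2+\|\psi^-\|^2)^{q/2}\le P\le\tilde{\mathcal S}_{N-1}(\|\psi^+\|^2+\|\psi^-\|^2)^{q/2}$, from which $\mathcal R_N^*\le\tilde{\mathcal S}_{N-1}$ drops out directly since the profile is nonzero. You instead replace $P$ by $\tilde{\mathcal S}_{N-1}(1-m)^{q/2}$ immediately and then have to case on whether $\tilde{\mathcal S}_{N-1}\le\mathcal R_N^*$. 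Your casework is sound (under $\tilde{\mathcal S}_{N-1}\le\mathcal R_N^*$, the chain of inequalities collapses to equalities forcing $m\in\{0,1\}$, and $m=1$ contradicts nontriviality of the profile), but the paper's version is a bit tighter. Two very small remarks: $\mathcal B_{\delta}$ is an exact isometry, not merely ``essentially'' one, so no error terms arise in step (i); and the fact that $\delta_n\to0$ is not a separate ``easier case'' to dispose of — it is \emph{forced} by the hypothesis $f_n\rightharpoonup_{\text{mod}}0$ (a bounded $\delta_n$ together with a nonzero weak limit of $\mathcal B_{\delta_n}^{-1}f_n$ would contradict that hypothesis), which is how the paper puts it.
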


\begin{proof}
We begin with the proof of $\leq$. Let $(f_n)\subset L^2(\Sph^{N-1})$ be a sequence with $\|f_n\|=1$, $f_n\rightharpoonup_{\text{mod}} 0$ and $\|\check f_n\|_q^q \to\mathcal R_N^*$. We may assume that $\mathcal R_N^*>0$, for otherwise there is nothing to prove, and therefore $\check f_n\not\to 0$ in $L^q(\R^N)$. According to Corollary \ref{concentration} and weak compactness, after passing to a subsequence, we may assume that there are sequences $(a_n)\subset\R^N$, $(R_n)\subset\cO(N)$ and $(\delta_n)\subset(0,\infty)$ with $\sup\delta_n<\infty$ and functions $\psi^+,\psi^-\in L^2(\R^{N-1})$ with
\begin{equation}
\label{eq:step2nonzero}
\|\psi^+\|^2 + \|\psi^-\|^2 \neq 0
\end{equation}
such that $\mathcal B_{R_n,\delta_n}^{-1} \left( e^{-ia_n\cdot\omega} f_n \right) \rightharpoonup (\widehat{\psi^+},\widehat{\psi^-})$ in $L^2(\R^{N-1})\times L^2(\R^{N-1})$. Since $f_n\rightharpoonup_{\text{mod}} 0$, we have $\delta_n\to 0$. Because of rotation and modulation invariance of the problem, we may assume that $R_n=\text{Id}$ and $a_n=0$ for all $n$ and we write $\mathcal B_{\delta_n}$ instead of $\mathcal B_{R_n,\delta_n}$.

We define
$$
p_n:= \mathcal B_{\delta_n}(\widehat{\psi^+},\widehat{\psi^-})
\qquad\text{and}\qquad
r_n:= f_n - \mathcal B_{\delta_n}(\widehat{\psi^+},\widehat{\psi^-}) \,.
$$
We shall show that
\begin{equation}
\label{eq:mm1}
m := \lim_{n\to\infty} \|r_n\|^2
\qquad\text{exists and satisfies}\
1= \|\psi^+\|^2 + \|\psi^-\|^2 + m
\end{equation}
and
\begin{equation}
\label{eq:mm2}
\mu := \lim_{n\to\infty} \int_{\R^N} |\check r_n|^q\,dx
\qquad\text{exists and satisfies}\
\mathcal R_N^* = P + \mu \,,
\end{equation}
where
$$
P := \lim_{n\to\infty} \int_{\R^N} |\pi_n|^q\,dx
$$
and
$$
\pi_n(x) := (2\pi)^{-1/2} \left( e^{ix_N/\delta_n^2} \left( e^{ix_N\Delta/2}\psi^+\right)(x') + e^{-ix_N/\delta_n^2} \left( e^{-ix_N\Delta/2}\psi^-\right)(x') \right) \,.
$$
(The fact that the limit definining $P$ exists is again a consequence of the arguments before Lemma \ref{equal}. In fact, we do not really need here the existence of the limit, but could simply work with the limsup in the definitions of both $\mu$ and $P$.)

Before proving \eqref{eq:mm1} and \eqref{eq:mm2}, let us show that they imply the proposition. Since $\delta_n\to0$ we have $\mathcal B_{\delta_n}(\widehat{\psi^+},\widehat{\psi^-})\rightharpoonup_{\text{mod}}0$, and since $f_n\rightharpoonup_{\text{mod}}0$ by assumption, we have $r_n\rightharpoonup_{\text{mod}} 0$. Thus,
\begin{equation}
\label{eq:mm3}
\mu \leq \mathcal R_N^*\, m^{q/2} \,.
\end{equation}
(In fact, if $m=0$, this follows from the Stein--Tomas inequality and, if $0<m<1$, it follows by using the definition of $\mathcal R_N^*$ for the sequence $r_n/\|r_n\|$.) Combining \eqref{eq:mm1}, \eqref{eq:mm2} and \eqref{eq:mm3} and recalling the elementary inequality used in the proof of Proposition \ref{step1} we obtain
\begin{align*}
\mathcal R_N^* & = P + \mu \leq P + \mathcal R_N^* m^{q/2} = P + \mathcal R_N^* \left( 1 - \|\psi^+\|^2 - \|\psi^-\|^2\right)^{q/2} \\
& \leq P + \mathcal R_N^* \left( 1 - \left( \|\psi^+\|^2 + \|\psi^-\|^2\right)^{q/2} \right),
\end{align*}
that is,
$$
\mathcal R_N^* \left( \|\psi^+\|^2 + \|\psi^-\|^2\right)^{q/2} \leq P \,.
$$
Because of \eqref{eq:step2nonzero} this is the claimed upper bound on $\mathcal R_N^*$.

It remains to prove \eqref{eq:mm1} and \eqref{eq:mm2}. For the proof of \eqref{eq:mm1} we recall the unitarity relation \eqref{eq:bunitary} for $\mathcal B_{\delta_n}$. Thus, the weak convergence $\mathcal B_{\delta_n}^{-1}r_n\rightharpoonup 0$ implies
$$
1 = \left\|f_n\right\|^2 = \left\|\mathcal B_{\delta_n}^{-1} f_n \right\|^2 = \left\| (\widehat{\psi^+},\widehat{\psi^-}) + \mathcal B_{\delta_n}^{-1}r_n \right\|^2 = \left\|(\widehat{\psi^+},\widehat{\psi^-})\right\|^2 + \left\|\mathcal B_{\delta_n}^{-1}r_n\right\|^2 + o(1) \,.
$$
Using once again $\|\mathcal B_{\delta_n}^{-1}r_n\|^2 = \|r_n\|^2$, we obtain \eqref{eq:mm1}.

For the proof of \eqref{eq:mm2} we denote $(\widehat{\psi_n^+},\widehat{\psi_n^-}):=\mathcal B_{\delta_n}^{-1} f_n$ and decompose, using \eqref{eq:bt},
\begin{align*}
\delta_n^{-(N-1)/2} \check f_n(x'/\delta_n,x_N/\delta_n^2) 
= \pi_n(x) + \rho_n(x) + \sigma_n(x) \,,
\end{align*}
where we have set
$$
\rho_n(x) := (2\pi)^{-1/2} \left( e^{ix_N/\delta_n^2} \rho_n^+(x',x_N) + e^{-ix_N/\delta_n^2} \rho_n^-(x',-x_N) \right)
$$
with
$$
\rho_n^\pm(x) := \mathcal T_{\delta_n}\left(\psi_n^\pm -\psi^\pm\right)(x)
$$
and
$$
\sigma_n(x) := (2\pi)^{-1/2} \left( e^{ix_N/\delta_n^2} \sigma_n^+(x',x_N) + e^{-ix_N/\delta_n^2} \sigma_n^-(x',-x_N) \right)
$$
with
$$
\sigma_n^\pm(x) := \left(\mathcal T_{\delta_n}\psi^\pm\right)(x) - \left( e^{ix_N\Delta/2}\psi^\pm\right)(x') \,.
$$
It follows from Proposition \ref{aeconv} that, after passing to a subsequence if necessary, $\rho_n^\pm\to 0$ almost everywhere and from Proposition \ref{tconv} that $\sigma_n^\pm \to 0$ in $L^q$. Moreover,
$$
|\pi_n(x)| \leq (2\pi)^{-1/2} \left( \left| \left( e^{ix_N\Delta/2}\psi^+\right)(x')\right| + \left| \left( e^{-ix_N\Delta/2}\psi^-\right)(x') \right| \right) \in L^q_x(\R^N) \,.
$$
Therefore, the generalized Br\'ezis--Lieb Lemma \ref{bl} implies
\begin{align*}
\int_{\R^N} \left| \delta_n^{-(N-1)/2} \check f_n(x'/\delta_n,x_N/\delta_n^2) \right|^q dx = & \int_{\R^N} |\pi_n(x)|^q\,dx + \int_{\R^N} |\rho_n(x)|^q\,dx + o(1) \,.
\end{align*}
By scaling, the left side equals $\|\check f_n\|_q^q$ and, since
$$
\rho_n(x) = \delta_n^{-(N-1)/2} \check r_n(x'/\delta_n,x_N/\delta_n^2) \,,
$$
the second term on the right side equals $\|\check r_n\|_q^q$. Taking the limit as $n\to\infty$ and using the fact that the limit definining $P$ exists we obtain \eqref{eq:mm3}. This completes the proof of the inequality $\leq$ in the proposition.

The proof of the inequality $\geq$ is similar, but simpler. Indeed, pick any pair of functions $(\psi^+,\psi^-)\in L^2(\R^{N-1})^2$ such that $\|\psi^+\|^2+\|\psi^-\|^2=1$ and any sequence $(\delta_n)$ of positive numbers converging to zero. Then, the sequence $f_n:=\mathcal B_{\delta_n}(\widehat{\psi^+},\widehat{\psi^-})$ satisfies $\|f_n\|=1$ and $f_n\rightharpoonup_{\text{mod}}0$. As a consequence,
$$\limsup_{n\to\ii}\int_{\R^N}|\check{f_n}|^q\,dx\le\cR_N^*.$$
On the other hand, by the same method as in the proof of the inequality $\leq$, we have
\begin{align*}
& \liminf_{n\to\ii}\int_{\R^N}|\check{f_n}|^q\,dx \\
& \quad \geq (2\pi)^{-q/2} \lim_{n\to\ii} \int_{\R^N}\left|\left(e^{ix_N\Delta/2}\psi^+\right)(x')+e^{-2ix_N/\delta_n^2} \left( e^{-ix_N\Delta/2}\psi^-\right)(x')\right|^q\,dx \,,
\end{align*}
showing that $\tilde{\mathcal{S}}_{N-1}\le\cR_N^*$.
\end{proof}

To complete the proof of Theorem \ref{main} it suffices to show equality \eqref{eq:step3}. This is the content of Corollary \ref{step3}.

\begin{remark}\label{gapnotstrict}
Similar arguments to those used before show that
\begin{equation}
\label{eq:gapnotstrict}
\mathcal R_N \geq \frac{2^{q/2}}{\sqrt\pi} \frac{\Gamma(\frac{q+1}{2})}{\Gamma(\frac{q+2}{2})}\, \mathcal S_{N-1} \,,
\end{equation}
which is the non-strict version of \eqref{eq:gap}. In fact, we clearly have $\mathcal R_N \geq \mathcal R_N^*$, so that \eqref{eq:gapnotstrict} follows from Proposition \ref{step2} and \eqref{eq:step3}. Moreover, by definition there is a sequence $(f_n)\subset L^2(\Sph^{N-1})$ with $\|f_n\|=1$ and $\|\check f_n\|_q^q \to \mathcal R_N^*$ which is \emph{not} precompact in $L^2(\Sph^{N-1})$. Thus, the \emph{strict} inequality \eqref{eq:gap} is necessary for the precompactness of all maximizing sequences.
\end{remark}


\section{A generalization of the Br\'ezis--Lieb lemma}\label{sec:bl}

The following abstract lemma decouples the main piece from a remainder piece that converges to zero almost everywhere.

\begin{lemma}\label{bl}
Let $(X,dx)$ be a measure space and $p>0$. Let $(\alpha_n)$ be a bounded sequence in $L^p(X)$ such that
$$
\alpha_n = \pi_n + \rho_n + \sigma_n \,,
$$
where, for some $\Pi\in L^p(X)$,
$$
|\pi_n| \leq \Pi
\qquad\text{for all}\ n
$$
and where
$$
\rho_n \to 0 \ \text{almost everywhere}
\qquad\text{and}\qquad
\sigma_n \to 0 \ \text{in} \ L^p(X) \,.
$$
Then
$$
\int_X |\alpha_n|^p\,dx = \int_X |\pi_n|^p\,dx + \int_X |\rho_n|^p\,dx + o(1)
\qquad\text{as}\ n\to\infty \,.
$$
\end{lemma}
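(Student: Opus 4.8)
The plan is to follow the strategy of the classical Br\'ezis--Lieb lemma, with the role of the (fixed) strong limit played by the uniformly dominated, but possibly non-convergent, sequence $(\pi_n)$. Throughout I would use the elementary inequality: for every $p>0$ and $\epsilon>0$ there is a constant $C_{p,\epsilon}$ such that
$$
\big| |a+b|^p - |b|^p \big| \le \epsilon\,|b|^p + C_{p,\epsilon}\,|a|^p \qquad\text{for all } a,b\in\C,
$$
which for $0<p\le1$ follows from subadditivity of $t\mapsto t^p$ and for $p\ge1$ from the mean value theorem combined with Young's inequality. I also record that $z\mapsto|z|^p$ is uniformly continuous on bounded subsets of $\C$ for every $p>0$.

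First I would dispose of $\sigma_n$. Since $\sigma_n\to0$ in $L^p$ and $(\alpha_n)$ is bounded in $L^p$, the sequence $(\pi_n+\rho_n)=(\alpha_n-\sigma_n)$ is bounded in $L^p$; applying the elementary inequality with $a=\sigma_n$, $b=\pi_n+\rho_n$ and integrating gives
$$
\int_X \big| |\alpha_n|^p - |\pi_n+\rho_n|^p \big|\,dx \le \epsilon\int_X|\pi_n+\rho_n|^p\,dx + C_{p,\epsilon}\int_X|\sigma_n|^p\,dx,
$$
so letting $n\to\infty$ and then $\epsilon\to0$ shows $\int_X|\alpha_n|^p\,dx - \int_X|\pi_n+\rho_n|^p\,dx\to0$. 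Hence it suffices to prove the lemma when $\sigma_n\equiv0$, i.e. to show $\int_X\big(|\pi_n+\rho_n|^p - |\pi_n|^p - |\rho_n|^p\big)\,dx\to0$.

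Set $W_n := \big| |\pi_n+\rho_n|^p - |\pi_n|^p - |\rho_n|^p \big|$. The main step is a pointwise-convergence-plus-domination argument. For the pointwise convergence, at any $x$ with $\rho_n(x)\to0$ and $\Pi(x)<\infty$, both $\pi_n(x)$ and $\pi_n(x)+\rho_n(x)$ eventually lie in the fixed bounded set $\{|z|\le\Pi(x)+1\}$ and differ by $\rho_n(x)\to0$, so uniform continuity of $z\mapsto|z|^p$ on that set gives $|\pi_n(x)+\rho_n(x)|^p - |\pi_n(x)|^p\to0$; since also $|\rho_n(x)|^p\to0$, we get $W_n(x)\to0$ a.e. For the domination, the elementary inequality with $a=\pi_n$, $b=\rho_n$ gives $\big||\pi_n+\rho_n|^p - |\rho_n|^p\big| \le \epsilon|\rho_n|^p + C_{p,\epsilon}|\pi_n|^p$, hence $W_n \le \epsilon|\rho_n|^p + (C_{p,\epsilon}+1)|\pi_n|^p \le \epsilon|\rho_n|^p + (C_{p,\epsilon}+1)\Pi^p$. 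Therefore $(W_n - \epsilon|\rho_n|^p)_+ \le (C_{p,\epsilon}+1)\Pi^p \in L^1(X)$ and $(W_n-\epsilon|\rho_n|^p)_+\to0$ a.e., so dominated convergence yields $\int_X(W_n-\epsilon|\rho_n|^p)_+\,dx\to0$. Since $\rho_n=(\pi_n+\rho_n)-\pi_n$ is a difference of $L^p$-bounded sequences, $\int_X|\rho_n|^p\,dx$ is bounded, and from $W_n\le \epsilon|\rho_n|^p+(W_n-\epsilon|\rho_n|^p)_+$ we get $\limsup_n\int_X W_n\,dx \le \epsilon\,\sup_n\int_X|\rho_n|^p\,dx$; letting $\epsilon\to0$ completes the proof.

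The only genuinely new point compared to the classical lemma --- and the step to watch --- is the a.e.\ convergence $W_n\to0$: one cannot appeal to convergence of the main piece $\pi_n$, only to its pointwise boundedness by the finite function $\Pi$, which is exactly what makes $z\mapsto|z|^p$ usable as a (uniformly continuous) function on the relevant bounded set. The hypothesis $\Pi\in L^p$ is then used, separately, to supply the integrable majorant needed for dominated convergence.
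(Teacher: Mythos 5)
Your proof is correct and follows essentially the same route as the paper's: first strip off $\sigma_n$ using the $L^p$-boundedness of $\alpha_n-\sigma_n$, then control $W_n:=\bigl||\pi_n+\rho_n|^p-|\pi_n|^p-|\rho_n|^p\bigr|$ by splitting off an $\epsilon|\rho_n|^p$ piece and applying dominated convergence to the remainder $(W_n-\epsilon|\rho_n|^p)_+$, which is dominated by $(C_{p,\epsilon}+1)\Pi^p$ and tends to $0$ a.e.\ because $|\pi_n|\le\Pi<\infty$ pointwise and $\rho_n\to0$ a.e. The only cosmetic difference is that you dispose of $\sigma_n$ with the elementary pointwise inequality, whereas the paper uses the triangle inequality (or its $p\le1$ analogue) in $L^p$; this is immaterial.
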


Note that, if $\pi_n$ is independent of $n$ and $\sigma_n=0$, this is the result from \cite{Lieb-83b} which was generalized in \cite{BreLie-83}. Our lemma follows by similar arguments as in \cite{BreLie-83}.

\begin{proof}
As a preliminary step we show that the asymptotics are independent of $\sigma_n$. By the triangle inequality we have for $p\geq 1$,
$$
\left| \left\|\alpha_n\right\|_p - \left\|\pi_n+\rho_n\right\|_p \right| \leq \|\sigma_n\|_p
$$
and for $0<p\leq 1$
$$
\left| \left\|\alpha_n\right\|_p^p - \left\|\pi_n+\rho_n\right\|_p^p \right| \leq \|\sigma_n\|_p^p \,.
$$
We conclude that
$$
\int_X |\alpha_n|^p\,dx = \int_X |\pi_n+\rho_n|^p\,dx + o(1) \,.
$$
(For $p>1$ we also use the fact that $\sup_n\|\alpha_n\|_p$ and $\sup_n\|\pi_n+\rho_n\|_p$ are finite; see the proof below.) Thus, the lemma will follow if we can prove that
\begin{equation}
\label{eq:blgoal}
\int_X \left| \left| \pi_n+\rho_n\right|^p - \left|\pi_n\right|^p - \left|\rho_n\right|^p \right| dx =o(1)
\qquad\text{as}\ n\to\infty \,.
\end{equation}

Let $\epsilon>0$ and put
$$
R_n := \left( \left| \left| \pi_n+\rho_n\right|^p - \left|\pi_n\right|^p - \left|\rho_n\right|^p \right| - \epsilon \left|\rho_n\right|^p \right)_+ \,.
$$
Then
$$
\int_X \left| \left| \pi_n+\rho_n\right|^p - \left|\pi_n\right|^p - \left|\rho_n\right|^p \right| dx \leq \epsilon \int_X \left|\rho_n\right|^p \,dx + \int_X R_n\,dx \,,
$$
and asymptotics \eqref{eq:blgoal} will follow if we can prove that
\begin{equation}
\label{eq:blproof0}
\limsup_{n\to\infty} \int_X \left|\rho_n\right|^p \,dx <\infty
\end{equation}
and
\begin{equation}
\label{eq:blproof}
\int_X R_n \,dx = o(1)
\qquad\text{as}\ n\to\infty\,.
\end{equation}

For the proof of \eqref{eq:blproof0} we simply bound
$$
|\rho_n|^p \leq C_p \left( |\alpha_n|^p + |\pi_n|^p + |\sigma_n|^p \right) \leq C_p \left( |\alpha_n|^p + \Pi^p + |\sigma_n|^p \right)
$$
with $C_p =3^{p-1}$ if $p\geq 1$ and $C_p = 1$ if $p<1$. Thus, by assumption,
$$
\limsup_{n\to\infty} \int_X |\rho_n|^p\,dx \leq C_p \limsup_{n\to\infty} \int_X \left( |\alpha_n|^p + \Pi^p \right)dx <\infty \,,
$$
which gives \eqref{eq:blproof0}.

We will prove \eqref{eq:blproof} by dominated convergence. Clearly, there is a $C_{\epsilon,p}$ such that for all $a,b\in\C$,
$$
\left| \left|a+b\right|^p - \left|b\right|^p \right| \leq \epsilon \left|b\right|^p + C_{\epsilon,p} \left|a\right|^p \,.
$$
Thus,
$$
\left| \left| \pi_n+\rho_n\right|^p - \left|\pi_n\right|^p - \left|\rho_n\right|^p \right| 
\leq \left| \left| \pi_n+\rho_n\right|^p - \left|\rho_n\right|^p \right|  + \left|\pi_n\right|^p 
\leq 
\epsilon \left|\rho_n\right|^p + \left( C_{\epsilon,p} + 1 \right) \left|\pi_n\right|^p
$$
and so
$$
R_n \leq \left( C_{\epsilon,p} + 1 \right) \left|\pi_n\right|^p \leq \left( C_{\epsilon,p} + 1 \right) \Pi^p \,.
$$
By assumption, the right side is integrable.

To complete the proof we show that $R_n\to 0$ almost everywhere. Note that $\rho_n\to 0$ almost everywhere and that $|\pi_n|\leq \Pi$. The set $\{\rho_n\to 0\}\cap\{\Pi<\infty\}$ has full measure and on this set we have $R_n\to 0$ almost everywhere. This simply follows from the fact that for sequences $(a_n),(b_n)\subset\C$ with $\sup|a_n|<\infty$ and $b_n\to 0$, we have $|a_n+b_n|^p-|a_n|^p\to 0$ for any $p>0$. This proves the lemma.
\end{proof}


\section{Some a-priori estimates and convergence results}\label{sec:top}

In this section we discuss the convergence properties of the operators $\mathcal T_\delta$ from \eqref{eq:deft} as $\delta\to 0$. These properties were used in the proof of Proposition \ref{step2}. As we have already seen in Section \ref{sec:mmm}, the operators $\mathcal T_\delta$ appear naturally in our problem for functions on $\Sph^{N-1}$ which concentrate near the north pole with the parameter $\delta$ denoting the scale on which the functions live.

\subsection{$L^q$ convergence}

We recall that we always assume $q=2(N+1)/(N-1)$. The purpose of this subsection is to prove the following convergence result.

\begin{proposition}\label{tconv}
Let $\psi\in L^2(\R^{N-1})$. Then, as $\delta\to 0$,
$$
\left(\mathcal T_\delta\psi \right)(x) \to \left( e^{ix_N\Delta/2}\psi \right)(x')
\ \text{in}\ L^q(\R^N) \,.
$$
\end{proposition}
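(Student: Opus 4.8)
The plan is to combine three ingredients: a uniform-in-$\delta$ bound for $\mathcal T_\delta$, a reduction to a dense class of $\psi$, and, on that class, pointwise convergence together with a $\delta$-uniform $L^q(\R^N)$ majorant, so that the conclusion follows by dominated convergence. First I would record that $\|\mathcal T_\delta\|_{L^2(\R^{N-1})\to L^q(\R^N)}$ is bounded uniformly for $\delta\in(0,1]$: applying \eqref{eq:bt} with $\psi^-=0$ to $f:=\mathcal B_\delta(\widehat\psi,0)$ and using the scaling $x=(\delta y',\delta^2 y_N)$ together with $q(N-1)/2=N+1$ gives $\|\mathcal T_\delta\psi\|_{L^q(\R^N)}^q=(2\pi)^{q/2}\|\check f\|_{L^q(\R^N)}^q$, while $\|f\|^2=\|\widehat\psi\|^2=\|\psi\|^2$ by \eqref{eq:bunitary} and Plancherel on $\R^{N-1}$, so the Stein--Tomas inequality yields $\|\mathcal T_\delta\psi\|_{L^q(\R^N)}\le(2\pi)^{1/2}\mathcal R_N^{1/q}\|\psi\|$. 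Since $\psi\mapsto(e^{ix_N\Delta/2}\psi)(x')$ is bounded $L^2(\R^{N-1})\to L^q(\R^N)$ by the Strichartz inequality ($d=N-1$, $2+4/d=q$), a standard three-term estimate reduces the claim to $\psi$ in the dense subspace $\{\psi:\widehat\psi\in C_c^\infty(\R^{N-1})\}$.

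So fix such a $\psi$, with $\widehat\psi$ supported in $\{|\xi|\le M\}$. Since $\zeta_1(\delta|\xi|)\to1$, $\zeta_2(\delta|\xi|)\to1$ and $(1+\delta^2|\xi|^2)^{-N/4}\to1$ uniformly on $\{|\xi|\le M\}$ as $\delta\to0$, dominated convergence in the $\xi$-integral of \eqref{eq:deft} shows that $(\mathcal T_\delta\psi)(x)\to(e^{ix_N\Delta/2}\psi)(x')$ for every $x$. It then suffices to produce a single $G\in L^q(\R^N)$ with $|\mathcal T_\delta\psi|\le G$ for all $\delta\in(0,1]$ and $|(e^{ix_N\Delta/2}\psi)(x')|\le G(x)$; the natural choice is $G(x)=C_\psi(1+|x|)^{-(N-1)/2}$, which lies in $L^q(\R^N)$ precisely because $q(N-1)/2=N+1>N$. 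The bound $|(e^{ix_N\Delta/2}\psi)(x')|\lesssim_\psi(1+|x|)^{-(N-1)/2}$ is the classical combination of nonstationary phase (where $|x'|\gg|x_N|$) and the dispersive estimate for $e^{ix_N\Delta/2}$ (elsewhere). For $\mathcal T_\delta\psi$ one writes \eqref{eq:deft} as an oscillatory integral $\int\widehat\psi(\xi)\,e^{ix\cdot\Gamma_\delta(\xi)}(1+\delta^2|\xi|^2)^{-N/4}\,d\xi$ with $\Gamma_\delta(\xi)=(\xi\zeta_1(\delta|\xi|),-\tfrac12|\xi|^2\zeta_2(\delta|\xi|))$ and notes that, after the substitution $\xi\mapsto\xi/\delta$, the surface $\Gamma_\delta(\{|\xi|\le M\})$ is a fixed-shape cap of the unit sphere of angular width $\sim M\delta$. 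Hence $\{\Gamma_\delta(\{|\xi|\le M\})\}_{\delta\in[0,1]}$ is a continuous family of smooth hypersurfaces (degenerating at $\delta=0$ to a compact piece of the paraboloid) with second fundamental form bounded below uniformly in $\delta$, and the standard stationary/nonstationary-phase bound with $C_c^\infty$ amplitude gives $|(\mathcal T_\delta\psi)(x)|\le C_\psi(1+|x|)^{-(N-1)/2}$ with $C_\psi$ independent of $\delta$. Dominated convergence then completes the argument, and the general case follows by density.

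The step I expect to require the most care is this $\delta$-uniform decay bound for $\mathcal T_\delta\psi$. For small $\delta$ it is a perturbation of the paraboloid estimate, but it must be established for \emph{all} $\delta\in(0,1]$; the point of viewing the $\Gamma_\delta$ as a compact family of uniformly curved surfaces is to make this transparent. Concretely, in the region $|x'|\gtrsim M|x_N|$ one has $|\nabla_\xi(x\cdot\Gamma_\delta(\xi))|\gtrsim|x|$ on $\supp\widehat\psi$, with all $\xi$-derivatives of amplitude and phase bounded uniformly in $\delta\le1$, so integration by parts yields rapid decay in $|x|$ there; in the complementary region the Hessian $\nabla_\xi^2(x\cdot\Gamma_\delta(\xi))$ is comparable to $|x_N|$ times a fixed nondegenerate matrix uniformly in $\delta$, so stationary phase gives decay of order $|x_N|^{-(N-1)/2}\sim|x|^{-(N-1)/2}$. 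Everything else --- the three-term reduction and the pointwise convergence --- is routine.
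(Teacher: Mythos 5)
Your proposal is correct and follows essentially the same route as the paper: reduce by density to $\hat\psi\in C_c^\infty$ using a $\delta$-uniform $L^2\to L^q$ bound on $\mathcal T_\delta$ (the paper records this as Lemma~\ref{tbounded} via a factorization, while you obtain it directly from \eqref{eq:bt}, scaling, and Stein--Tomas, which is equivalent), then establish pointwise convergence and a $\delta$-uniform decay estimate $|(\mathcal T_\delta\psi)(x)|\lesssim (1+|x|)^{-(N-1)/2}$ by stationary/nonstationary phase for the curved family $\Gamma_\delta$, and conclude by dominated convergence. The only cosmetic difference is that the paper splits into $|x|\geq R$ and $|x|\leq R$ rather than using a single $L^q$ majorant, and observe that the decay bound is really only needed for $\delta$ in a neighborhood of $0$.
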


We begin with an a-priori bound for $\mathcal T_\delta$.

\begin{lemma}\label{tbounded}
$\mathcal T_\delta$ is a bounded operator from $L^2(\R^{N-1})$ to $L^q(\R^N)$ and $\|\mathcal T_\delta\|_{L^2\to L^q}$ is independent of $\delta>0$.
\end{lemma}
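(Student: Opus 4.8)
The plan is to show that $\mathcal{T}_\delta$ is, up to the pointwise-bounded factor $(1+\delta^2\xi^2)^{-N/4}$, a Fourier extension-type operator for a hypersurface that converges to the paraboloid as $\delta\to 0$, and that the relevant Stein--Tomas/Strichartz bound is uniform in $\delta$. Concretely, writing out the definition \eqref{eq:deft}, the phase $\xi\cdot x'\zeta_1(\delta|\xi|) - \tfrac12\xi^2 x_N\zeta_2(\delta|\xi|)$ is, after the change of variables $\eta = \xi\zeta_1(\delta|\xi|)$ (which is a diffeomorphism of $\R^{N-1}$ since $k\mapsto k\zeta_1(\delta k)$ is increasing), exactly $\eta\cdot x' - t\,\Psi_\delta(\eta)$ with $t=x_N$ and a new profile function $\Psi_\delta$. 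The graph $\{(\eta,\Psi_\delta(\eta))\}$ is precisely the rescaled lower hemisphere of $\Sph^{N-1}$ written as a graph over the equatorial plane — indeed, this is what formula \eqref{eq:bt} together with \eqref{eq:defi-Bn} is telling us. Hence $\mathcal{T}_\delta\psi$ is (a constant multiple of) the inverse Fourier transform of $f\,d\omega$ restricted to a fixed piece of the sphere, rescaled, and Lemma~\ref{tbounded} is then just a restatement of the Stein--Tomas inequality after undoing the rescaling.

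First I would make this reduction precise: starting from \eqref{eq:bt}, note that for $\psi^- = 0$ and $\psi^+ = \psi$ one has
$$
\delta^{-(N-1)/2}\,\check{f}(x'/\delta, x_N/\delta^2) = (2\pi)^{-1/2} e^{ix_N/\delta^2}\,(\mathcal{T}_\delta\psi)(x), \qquad f = \mathcal{B}_\delta(\hat\psi, 0).
$$
Taking $L^q(\R^N)$ norms on both sides, the left-hand side equals $(2\pi)^{-1/2}\,\delta^{-(N-1)/2 + (N-1)/q + 2/q}\,\|\check f\|_q$ by the scaling $x'\mapsto \delta x'$, $x_N\mapsto \delta^2 x_N$; with $q = 2(N+1)/(N-1)$ one checks $(N-1)/q + 2/q = (N-1)/2$, so the powers of $\delta$ cancel and $\|\mathcal{T}_\delta\psi\|_q = (2\pi)^{1/2}\|\check f\|_q$. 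By the Stein--Tomas inequality, $\|\check f\|_q \le \mathcal{R}_N^{1/q}\|f\|$, and by \eqref{eq:bunitary}, $\|f\| = \|\hat\psi\| = \|\psi\|$. Thus $\|\mathcal{T}_\delta\psi\|_q \le (2\pi)^{1/2}\mathcal{R}_N^{1/q}\|\psi\|$, a bound independent of $\delta$.

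Alternatively, and perhaps cleaner to write without invoking the $\mathcal{B}$-operators, one can argue directly from \eqref{eq:deft}: since $(1+\delta^2\xi^2)^{-N/4}\le 1$, it suffices to bound the operator with that weight replaced by $1$, and after the substitution $\eta=\xi\zeta_1(\delta|\xi|)$ with Jacobian bounded above and below uniformly in $\delta$ on compact sets — actually one should keep track that the Jacobian is comparable to a function of $|\eta|$ bounded away from $0$ and $\infty$ only after noting the map is onto $\R^{N-1}$ — one recognizes the extension operator for the graph of $\Psi_\delta$. One then needs a version of Stein--Tomas that is uniform over this one-parameter family of hypersurfaces; since they degenerate (flatten) only at infinity in $\eta$, not on compact sets, and since the weight $(1+\delta^2\xi^2)^{-N/4}$ in the original variable exactly compensates the large-$\eta$ behavior, the uniform bound holds. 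I expect the main obstacle — really the only nontrivial point — to be verifying that the change of variables $\xi\mapsto\eta$ genuinely identifies $\mathcal{T}_\delta$ with the (rescaled) spherical extension operator, i.e. carrying out the computation behind \eqref{eq:bt} carefully enough to see that the weight, the Jacobian, and the $\delta$-powers all conspire correctly; once that identification is in hand, boundedness with a $\delta$-independent constant is immediate from Stein--Tomas and \eqref{eq:bunitary}. The cleanest writeup therefore simply invokes \eqref{eq:bt} and \eqref{eq:bunitary} as already established, and reduces everything to the scaling computation above.
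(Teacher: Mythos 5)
Your proof is correct and is essentially the same as the paper's: the paper packages the same change of variables as the factorization $\mathcal T_\delta = (2\pi)^{1/2} e^{-ix_N/\delta^2}\,\mathcal V_\delta\, \mathcal F^{-1}\, \mathcal B\, \mathcal U_\delta\, \mathcal F$ with $\mathcal V_\delta$, $\mathcal U_\delta$ isometries and $\mathcal B$ unitary onto $L^2(\Sph^{N-1}_+)$, while you invoke the equivalent identity \eqref{eq:bt} with $\psi^-=0$ together with \eqref{eq:bunitary}, and the $\delta$-powers cancel exactly because $(N+1)/q=(N-1)/2$. Both reductions show that $\|\mathcal T_\delta\|_{L^2\to L^q}$ equals $(2\pi)^{1/2}$ times the Stein--Tomas constant for the (fixed, $\delta$-independent) upper hemisphere, which gives not only a uniform bound but exact independence of $\delta$.
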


\begin{proof}[Proof of Lemma \ref{tbounded}]
We claim that for all $\psi\in L^2(\R^{N-1})$ and for all $x\in\R^N$,
\begin{equation}
\label{eq:tfactorization}
(\mathcal T_\delta\psi)(x) =(2\pi)^{1/2}e^{-ix_N/\delta^2}(\mathcal V_{\delta} \mathcal F^{-1} \mathcal B \mathcal U_\delta \mathcal F\psi)(x) \,.
\end{equation}
where $\mathcal V_{\delta}$ and $\mathcal U_\delta$ are isometric isomorphisms in $L^q(\R^N)$ and $L^2(\R^N)$, respectively, $\mathcal F$ denotes the Fourier transform and $\mathcal B$ is a unitary operator from $L^2(\R^{N-1})$ to $L^2(\Sph^{N-1}_+)$ ($\Sph^{N-1}_+$ denoting the northern hemisphere). Thus,
$$
\|\mathcal T_\delta\|_{L^2\to L^q} = (2\pi)^{1/2}\|\mathcal F^{-1}\|_{L^2(\Sph^{N-1}_+)\to L^q(\R^N)} \,,
$$
which is finite by the Stein--Tomas theorem. The operators $\mathcal V_{\delta^{-1}}$ and $\mathcal U_\delta$ are simply defined by
$$
\left( \mathcal V_{\delta} F\right)(x) = \delta^{-(N+1)/q} F(x'/\delta,x_N/\delta^2) \,,
\qquad
\left( \mathcal U_\delta \phi \right)(\xi) = \delta^{-(N-1)/2} \phi(\xi/\delta) \,. 
$$
The operator $\mathcal B$ is defined by
\begin{equation}\label{eq:defi-B0}
\left(\mathcal B\phi\right)\left(\frac{\xi}{\sqrt{1+\xi^2}}, \frac{1}{\sqrt{1+\xi^2}}\right)=(1+|\xi|^2)^{N/4}\phi(\xi) \,.
\end{equation}
The fact that $\mathcal B$ is a unitary operator from $L^2(\R^{N-1})$ to $L^2(\Sph^{N-1}_+)$ follows by a simple change of variables. The claimed identity \eqref{eq:tfactorization} follows by the same change of variables.
\end{proof}

We now use this lemma to prove the proposition.

\begin{proof}[Proof of Proposition \ref{tconv}]
Because of Lemma \ref{tbounded} it suffices to prove the proposition for $\psi\in L^2(\R^{N-1})$ with $\hat\psi\in C_c^\infty(\R^{N-1})$. For such $\psi$ we shall show that for all $(x',x_N)\in\R^N$, 
\begin{equation}\label{eq:ae-convergence}
\lim_{\delta\to0} \left( \mathcal T_\delta\psi\right)(x',x_N)=\left(e^{ix_N\Delta/2}\psi\right)(x')\,, 
\end{equation}
\begin{equation}\label{eq:uniform-decay}
\left|\left(\mathcal T_\delta\psi\right)(x',x_N)\right|+\left|\left(e^{ix_N\Delta/2}\psi\right)(x')\right|\leq C |x|^{-(N-1)/2}
\end{equation}
for some constant $C>0$ independent of $\delta$ (but dependent of $\psi$). The limit \eqref{eq:ae-convergence} follows immediately from Lebesgue's dominated convergence theorem, since we have the correct limit under the integral and $\hat \psi\in L^1(\R^{N-1})$. Assume for the moment the decay estimate \eqref{eq:uniform-decay} and let us show that this implies the claimed $L^q$ convergence. We have for some $C'$ and all $\delta>0$
$$
\int_{|x|\geq R} \left( \left|\left(\mathcal T_\delta \psi\right)(x) \right|^q + \left|\left(e^{ix_N\Delta/2}\psi\right)(x')\right|^q \right) dx \leq \frac{C'}{R}\,.
$$
This can be made arbitrarily small, uniformly in $\delta>0$, by choosing $R>0$ large. Thus, it suffices to prove that for any fixed $R>0$
$$
\chi_{B_R}(x) \left(\mathcal T_\delta\psi \right)(x) \to \chi_{B_R}(x) \left( e^{ix_N\Delta/2}\psi \right)(x')
\ \text{in}\ L^q(\R^N) \,,
$$
where $B_R$ denotes the ball of radius $R>0$. This follows immediately by dominated convergence from the pointwise convergence \eqref{eq:ae-convergence} together with the uniform bound
$$
\left|\left(\mathcal T_\delta\psi\right)(x)\right| \leq (2\pi)^{-(N-1)/2} \int_{\R^{N-1}} |\hat\psi(\xi)|\,d\xi <\infty \,.
$$

Thus, it thus remains to prove the decay estimate \eqref{eq:uniform-decay}, which follows from stationary phase estimates as in Stein \cite[p.349]{Stein-book-93}. Let us recall how it is done when there is no dependence on $\delta$. The integral
$$
\left(e^{ix_N\Delta/2}\psi\right)(x')=\frac{1}{(2\pi)^{(N-1)/2}}\int_{\R^{N-1}}e^{ix'\cdot\xi-ix_N \xi^2/2}\hat \psi(\xi)\,d\xi
$$
can be written as an oscillatory integral
$$\int_{\R^{N-1}}e^{i\lambda\Phi(\omega,\xi)}a(\xi)\,d\xi$$
with a large parameter $\lambda=|x|$, a smooth phase function $\Phi(\omega,\xi)=\omega'\cdot\xi-\omega_N\xi^2/2$, where $\omega=(\omega',\omega_N)\in\Sph^{N-1}$, and an amplitude $a=\hat\psi/(2\pi)^{N-1} \in C^\ii_c(\R^{N-1})$. We distinguish two cases: when $\omega$ is close to the poles, then the phase has critical points but we have a uniform lower bound on the determinant of the Hessian, so we may use stationary phase. Away from the poles, there is no critical point and we have a uniform lower bound on $|\nabla \Phi|$, so that we may use integration by parts.
  
First, when $|\omega_N|\ge b$ for some $0<b<1$ to be determined later (that is, when $\omega$ is close to the poles), then the phase is stationary at the points where
$$\nabla_\xi\Phi(\omega,\xi)=\omega'-\omega_N\xi=0,$$
that is for $\xi=\omega'/\omega_N$. Furthermore, we have $D^2_\xi\Phi(\omega,\xi)=\omega_N$, meaning that 
$$|\det D^2_\xi\Phi(\omega,\xi)|=|\omega_N|^{N-1}\ge b^{N-1}.$$
All the $\xi$-derivatives of $a$ and $\Phi$ are uniformly bounded in $\omega$ in this region, so that we may use the uniform stationary phase estimates of Alazard, Burq, and Zuily \cite{AlaBurZui-15} to infer that 
$$\left|\int_{\R^{N-1}}e^{i\lambda\Phi(\omega,\xi)}a(\xi)\,d\xi\right|\le C_{a,b}\lambda^{-(N-1)/2}$$
for all $\omega$ such that $|\omega_N|\ge b$. In the region $|\omega_N|\le b$, we have $|\omega'|\ge(1-b^2)^{1/2}$ and hence
$$|\nabla_\xi\Phi(\omega,\xi)|\ge\sqrt{1-b^2}-bR,$$
where $R>0$ is such that $\text{supp}\ a\subset B(0,R)$. Hence, if $b$ is sufficiently small such that $\sqrt{1-b^2}/b>R$, then the phase has no critical point and we have by integration by parts
$$\left|\int_{\R^{N-1}}e^{i\lambda\Phi(\omega,\xi)}a(\xi)\,d\xi\right|\le C_n(a)\lambda^{-n}$$
for any $n\in\N$, where $C_n(a)$ is uniform in $\omega$ such that $|\omega_N|\le b$, since we have a uniform lower bound on $|\nabla_\xi \Phi|$ in this region and uniform upper bounds on higher $\xi$-derivatives of $\Phi$. 

We have to do the same thing when $\delta>0$, and all the bounds that were uniform in $\omega$ should now be uniform in $(\omega,\delta)$. In this case, the new phase function has the form
$$
\Phi(\delta,\omega,\xi)=\omega'\cdot\xi\zeta_1(\delta|\xi|)-\omega_N\frac{\xi^2}{2}\zeta_2(\delta|\xi|)\,,
$$
and the amplitude has the form
$$
a(\delta,\xi)=\frac{\hat\psi(\xi)}{(1+\delta^2\xi^2)^{N/4}} \,.
$$
The functions $\xi\mapsto \zeta_1(\delta |\xi|)$ and $\xi\mapsto\zeta_2(\delta|\xi|)$, and $a$ are $C^\ii$ and satisfy $\zeta_1(0)=1=\zeta_2(0)$. All the $\xi$-derivatives of $\Phi$ and $a$ are uniformly bounded in $(\delta,\omega)$, on the support of $a$. First, consider the case $|\omega_N|\ge b$. We have 
$$D^2_\xi\Phi(\delta,\omega,\xi)=\omega_N \zeta_2(\delta|\xi|) +O(\delta),$$
where the $O(\delta)$ is uniform in $(\omega,\xi)\in\Sph^{N-1}\times\text{supp}(a)$. Hence, there exists $\delta_0=\delta_0(R)>0$ such that 
$$|\det D^2_\xi\Phi(\delta,\omega,\xi)|\ge\left(\frac b2\right)^{N-1},$$
for all $|\omega_N|\ge b$, $0\le\delta\le\delta_0$, $\xi\in\text{supp}(a)$.
Using again the result of Alazard--Burq--Zuily (notice here that we do not need to describe where the critical points are, a lower bound on the determinant of the Hessian is enough to apply their result - we could have done the same in the $\delta=0$ case actually), we obtain again that
$$\left|\int_{\R^{N-1}}e^{i\lambda\Phi(\delta,\omega,\xi)}a(\delta,\xi)\,d\xi\right|\le C_{a,b}\lambda^{-(N-1)/2}$$
for all $\omega$ such that $|\omega_N|\ge b$ and all $0\le\delta\le\delta_0$. In the region $|\omega_N|\le b$, we use the fact that
$$
\nabla_\xi\Phi(\delta,\omega,\xi)=\omega'\zeta_1(\delta|\xi|)-\omega_N\xi\zeta_2(\delta|\xi|)+O(\delta) \,,
$$
and hence
$$|\nabla_\xi\Phi(\delta,\omega,\xi)|\ge\frac12\sqrt{1-b^2}-\frac32 bR$$
for all $0\le\delta<\delta_1(b,R)$, $|\omega_N|\le b$, $\xi\in\text{supp}(a)$. For $b=b(R)$ small enough, this lower bound is positive. Using again integration by parts with this uniform lower bound on $|\nabla\Phi|$, we deduce
$$\left|\int_{\R^{N-1}}e^{i\lambda\Phi(\omega,\xi)}a(\delta,\xi)\,d\xi\right|\le C_n(a)\lambda^{-n}$$
for any $n\in\N$, where $C_n(a)$ is uniform in $(\omega,\delta)$ such that $|\omega_N|\le b$ and $0\le\delta\le\delta_1$. This finishes the proof of \eqref{eq:uniform-decay} and the proof of Proposition \ref{tconv}.
\end{proof}


\subsection{Almost everywhere convergence}

While in the previous subsection we dealt with $L^q$ convergence of $\mathcal T_{\delta_n}\psi$ when $\delta_n\to 0$, we will now deal with almost everywhere convergence of $\mathcal T_{\delta_n}\psi_n$ when $\delta_n\to 0$ and $\psi_n \rightharpoonup 0$ in $L^2(\R^{N-1})$. The purpose of this subsection is to prove the following convergence result.

\begin{proposition}\label{aeconv}
Let $\psi_n\rightharpoonup 0$ in $L^2(\R^{N-1})$ and $\delta_n\to 0$ in $(0,\infty)$. Then $\mathcal T_{\delta_n}\psi_n \to 0$ in $L^2_{\loc}(\R^N)$ and, in particular, there is a subsequence such that $\mathcal T_{\delta_{n_k}}\psi_{n_k} \to 0$ almost everywhere on $\R^N$.
\end{proposition}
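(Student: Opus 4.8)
The plan is to mimic the classical fact that a sequence tending weakly to zero in $L^2$ produces a free Schrödinger evolution tending to zero in $L^2$ of every compact subset of space-time, the new feature being the auxiliary parameter $\delta_n\to0$. Since convergence in $L^2_{\loc}(\R^N)$ entails almost everywhere convergence along a subsequence, and since $L^2_{\loc}(\R^N)$ just means $L^2(K)$ for every compact $K$, it suffices to prove that $\mathcal T_{\delta_n}\psi_n\to0$ in $L^2(K)$ for each fixed compact $K\subset\R^N$. I would deduce this from two facts: (i) $\mathcal T_{\delta_n}\psi_n\rightharpoonup0$ in $L^2_{\loc}(\R^N)$, and (ii) $\{\mathcal T_{\delta_n}\psi_n\}$ is precompact in $L^2(K)$; together they force strong convergence to the weak limit, namely $0$.

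For (i): pairing with $g\in C^\ii_c(\R^N)$ gives $\int_{\R^N}g\,\mathcal T_{\delta_n}\psi_n\,dx=\langle\psi_n,\mathcal T_{\delta_n}^{*}g\rangle_{L^2(\R^{N-1})}$, so since $\psi_n\rightharpoonup0$ and $\sup_n\|\psi_n\|<\ii$ it is enough that $\mathcal T_\delta^{*}g$ converges \emph{strongly} in $L^2(\R^{N-1})$ as $\delta\to0$. Up to a constant the Fourier transform of $\mathcal T_\delta^{*}g$ equals $(1+\delta^2\xi^2)^{-N/4}\,(\mathcal F g)\big(\xi\zeta_1(\delta|\xi|),-\tfrac12\xi^2\zeta_2(\delta|\xi|)\big)$, with $\mathcal F g$ the $N$-dimensional Fourier transform of $g$; since $\mathcal F g$ is Schwartz and $\zeta_1,\zeta_2$ are bounded with $\zeta_i(0)=1$, this quantity is bounded uniformly in $\delta\in(0,1]$ by $C_k\langle\xi\rangle^{-k}$ on $\{|\xi|\le\delta^{-1}\}$ and is $O_k(\delta^{2k})$ on $\{|\xi|>\delta^{-1}\}$ (there $\tfrac12\xi^2\zeta_2(\delta|\xi|)\ge c\,\delta^{-2}$), so dominated convergence yields $L^2$-convergence to $(2\pi)^{-(N-1)/2}(\mathcal F g)(\xi,-\tfrac12\xi^2)$, the symbol of the adjoint Schrödinger extension. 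This is just the stationary/non-stationary phase information already exploited in Lemma~\ref{tbounded} and Proposition~\ref{tconv}.

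The substance is (ii). Here I would split $\psi_n=\psi_n^{\flat}+\psi_n^{\sharp}$, where $\widehat{\psi_n^{\flat}}$ coincides with $\widehat{\psi_n}$ on $\{|\xi|\le\Lambda\delta_n^{-1}\}$ and vanishes elsewhere, $\Lambda$ being a large parameter. On that frequency range $\mathcal T_{\delta_n}\psi_n^{\flat}$ solves in the $x_N$-variable the dispersive equation $i\partial_{x_N}u=a_{\delta_n}(D_{x'})u$, with real symbol $a_\delta(\eta)=\delta^{-2}\big(1-\sqrt{1-\delta^2|\eta|^2}\big)\to\tfrac12|\eta|^2$, whose Hessian is bounded from above and below uniformly in $\delta$ on the relevant range of $\eta$; hence the local smoothing estimate of Lemma~\ref{locsmooth} controls $\mathcal T_{\delta_n}\psi_n^{\flat}$ in $L^2_{x_N}(\R;H^{1/2}_{x',\loc}(\R^{N-1}))$ uniformly in $n$, while the equation controls $\partial_{x_N}\mathcal T_{\delta_n}\psi_n^{\flat}$ in $L^2_{x_N}(\R;H^{-s}_{x',\loc}(\R^{N-1}))$ for a suitable $s$; an Aubin--Lions--Simon compactness argument then makes $\{\mathcal T_{\delta_n}\psi_n^{\flat}\}$ precompact in $L^2(K)$. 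Since $\widehat{\psi_n^{\flat}}\to\widehat{\psi_n}$ in a sense that makes $\psi_n^{\flat}\rightharpoonup0$, part (i) applied to $\psi_n^{\flat}$ shows $\mathcal T_{\delta_n}\psi_n^{\flat}\to0$ in $L^2(K)$. For the remainder one notes, via the change of variables behind \eqref{eq:bt}, that $\mathcal T_{\delta_n}\psi_n^{\sharp}$ is, up to a unimodular factor and an anisotropic rescaling by $\delta_n$, the Fourier extension $\check h_n$ of a function $h_n$ on $\Sph^{N-1}$ supported in the $O(\Lambda^{-1})$-neighbourhood of the equator, with $\|h_n\|_{L^2(\Sph^{N-1})}=\|\psi_n^{\sharp}\|\le C$; the uniform Stein--Tomas bound (Lemma~\ref{tbounded}) together with the fact that the rescaling pushes the region actually tested on $K$ out to spatial infinity then yields $\limsup_n\|\mathcal T_{\delta_n}\psi_n^{\sharp}\|_{L^2(K)}\to0$ as $\Lambda\to\ii$. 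Combining the two pieces gives $\mathcal T_{\delta_n}\psi_n\to0$ in $L^2(K)$, hence the proposition.

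I expect the high-frequency remainder $\psi_n^{\sharp}$ to be the main difficulty. The loss of compactness for the family $\mathcal T_{\delta_n}$ is not only the concentration mechanism that local smoothing is built for, but also the rapid oscillation $e^{\pm ix_N/\delta_n^2}$ and the parabolic anisotropic rescaling inherent in $\mathcal T_{\delta_n}$ as $\delta_n\to0$; consequently a bare Rellich/Aubin--Lions argument cannot work uniformly across all of frequency space, since the regularity gain of the local smoothing estimate degenerates once $|\xi|\gtrsim\delta_n^{-1}$. Handling that regime forces one to use the geometry near the equator of $\Sph^{N-1}$ and genuine oscillatory-integral (stationary-phase) estimates rather than crude Hölder inequalities, which is precisely the role that the quantitative local smoothing statement of Lemma~\ref{locsmooth}, the announced analogue of the local smoothing property of the free Schrödinger equation, is designed to play.
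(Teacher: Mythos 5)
Your overall plan (reduce to $L^2(K)$, split low/high frequencies) mirrors the paper, but the crucial details differ and the treatment of the high-frequency remainder has a genuine gap. The paper cuts at a \emph{fixed} frequency threshold $\Lambda$ (independent of $n$), and the local smoothing Lemma~\ref{locsmooth} alone disposes of the entire high-frequency piece: because
$$
\left\|\left(\tfrac{-\delta_n^2\Delta+1}{-\Delta}\right)^{1/4}P_\Lambda^\perp\right\|_{L^2\to L^2}
=\sup_{|\xi|>\Lambda}\bigl(|\xi|^{-2}+\delta_n^2\bigr)^{1/4}
=\bigl(\Lambda^{-2}+\delta_n^2\bigr)^{1/4},
$$
the norm $\|\chi_K\mathcal T_{\delta_n}P_\Lambda^\perp\psi_n\|_{L^2}$ is $O\bigl((\Lambda^{-2}+\delta_n^2)^{1/4}\bigr)$ uniformly in $n$, hence small for $\Lambda$ large. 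The low-frequency part $P_\Lambda\psi_n$ then requires no compactness device at all: for each fixed $x$, the kernel $\xi\mapsto\chi_{B_\Lambda}(\xi)\,e^{i(\cdots)}(1+\delta_n^2\xi^2)^{-N/4}$ converges strongly in $L^2_\xi$, so $\hat\psi_n\rightharpoonup 0$ gives pointwise convergence of $\mathcal T_{\delta_n}P_\Lambda\psi_n(x)$ to $0$, and the trivial bound $|B_\Lambda|^{1/2}\sup_n\|\psi_n\|$ lets dominated convergence upgrade this to $L^2(K)$. In other words, once the cut is fixed in $\xi$, the low-frequency piece is ``finite-dimensional enough'' that weak convergence already forces strong convergence.

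Your frequency cut at $\Lambda\delta_n^{-1}$ puts all of the difficulty into the remainder $\psi_n^\sharp$, and the argument you sketch there does not close. The anisotropic rescaling relating $\mathcal T_{\delta_n}\psi_n^\sharp$ to $\check h_n$ carries a Jacobian that exactly cancels the gain you expect from ``pushing $K$ to spatial infinity'': with $q=2(N+1)/(N-1)$, H\"older on the rescaled ball gives $\|\mathcal T_{\delta_n}\psi_n^\sharp\|_{L^2(K)}\lesssim_K\|\check h_n\|_{L^q}\lesssim\|h_n\|_{L^2}=\|\psi_n^\sharp\|_{L^2}$, with no extra power of $\delta_n$ or $\Lambda^{-1}$. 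For a general weakly null sequence $\psi_n$, $\|\psi_n^\sharp\|_{L^2}$ does \emph{not} go to zero as $\Lambda\to\infty$ uniformly in $n$, so the claimed $\limsup_n\|\mathcal T_{\delta_n}\psi_n^\sharp\|_{L^2(K)}\to0$ is not justified. (The support of $h_n$ being a thin slab near the equator gives you an $L^\infty$ gain of $\Lambda^{-1/2}$ on $\check h_n$, but that does not translate into the required $L^2(K)$ smallness.) The Aubin--Lions route for $\psi_n^\flat$ is workable in principle but far heavier than needed, and it is precisely the remainder you set aside that the paper's fixed-$\Lambda$ cut is designed to handle via Lemma~\ref{locsmooth}.
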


The key ingredient in the proof of this proposition is the following analogue of the local smoothing property of the Schr\"odinger equation.

\begin{lemma}\label{locsmooth}
Let $a\in\mathcal S(\R^{N-1})$ be radial. Then there is a constant $C_a$ such that for all $\psi\in L^2(\R^{N-1})$ and all $\delta>0$
$$
\int_{\R^N} a(x') \left| \mathcal T_\delta \left(\frac{-\Delta}{-\delta^2\Delta +1}\right)^{1/4}\psi \right|^2 \,dx \leq C_a \|\psi\|^2 \,.
$$
\end{lemma}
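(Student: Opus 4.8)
The plan is to reduce the estimate to a known local smoothing bound for the free Schr\"odinger flow by exploiting the factorization of $\mathcal T_\delta$ from Lemma \ref{tbounded} and then inserting the extra $\left(\frac{-\Delta}{-\delta^2\Delta+1}\right)^{1/4}$ symbol into the analysis. Recall that (up to unimodular factors and the isometries $\mathcal V_\delta$, $\mathcal U_\delta$) the operator $\mathcal T_\delta$ is the adjoint restriction operator to the northern hemisphere, precomposed with a $\delta$-dependent reparametrization of frequency space, and that the phase function appearing in \eqref{eq:deft} is $\xi\cdot x'\zeta_1(\delta|\xi|) - \tfrac12\xi^2 x_N \zeta_2(\delta|\xi|)$, which at $\delta=0$ degenerates to the Schr\"odinger phase $\xi\cdot x' - \tfrac12\xi^2 x_N$. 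The point of the weight $a(x')$, which depends only on the spatial variables $x'$ and not on the ``time'' variable $x_N$, is that integrating first in $x_N$ over all of $\R$ produces a $\delta$-function in frequency that collapses the double frequency integral to a single one, exactly as in the classical proof of local smoothing for $e^{it\Delta/2}$.

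Concretely, I would first write out $\left|\mathcal T_\delta\left(\frac{-\Delta}{-\delta^2\Delta+1}\right)^{1/4}\psi\right|^2$ as a double integral over $\xi,\eta\in\R^{N-1}$, carry out the $x_N$-integration over $\R$, and use the formula $\int_\R e^{-\tfrac i2 x_N(\xi^2\zeta_2(\delta|\xi|) - \eta^2\zeta_2(\delta|\eta|))}\,dx_N = 2\pi\,\delta\!\left(\tfrac12(\xi^2\zeta_2(\delta|\xi|) - \eta^2\zeta_2(\delta|\eta|))\right)$ in the distributional sense. Since $k\mapsto k^2\zeta_2(\delta k)/2 = (1-(1+\delta^2k^2)^{-1/2})/\delta^2$ is strictly increasing in $k=|\xi|\ge 0$ with a uniform lower bound on its derivative away from $0$ — in fact the derivative is $(1+\delta^2k^2)^{-3/2}k$ — the delta measure is supported on $|\xi|=|\eta|$ and, after integrating out the radial variable of $\eta$ against this delta, one is left with an integral over $|\xi|=|\eta|=:k$ and over the two spheres of radius $k$, weighted by $\hat a$ evaluated at the difference of the angular parts and by the symbols $|\xi|^{1/2}(1+\delta^2|\xi|^2)^{-1/4}(1+\delta^2|\xi|^2)^{-N/4}$ coming from the amplitude in \eqref{eq:deft} and the operator $\left(\frac{-\Delta}{-\delta^2\Delta+1}\right)^{1/4}$. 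I would then bound $|\hat a|$ by an integrable radial function (using $a\in\mathcal S$), apply Schur's test on the sphere $\{|\xi|=k\}$ (whose surface measure carries a factor $k^{N-2}$), and observe that the surviving power of $k$ together with the Jacobian $1/\big((1+\delta^2k^2)^{-3/2}k\big)$ from the delta and the symbol factors combine to $k^{N-2}\cdot k^{1/2}\cdot k^{-1}\cdot(1+\delta^2k^2)^{3/2}(1+\delta^2k^2)^{-1/4}(1+\delta^2k^2)^{-N/2}$; the crucial check is that the negative powers of $(1+\delta^2k^2)$ dominate so that the $k$-integral converges uniformly in $\delta$. One packages everything as $\int_{\R^{N-1}}|\hat\psi(\xi)|^2 \,m_\delta(\xi)\,d\xi$ with $m_\delta$ a bounded multiplier, uniformly in $\delta$, which is precisely $C_a\|\psi\|^2$ by Plancherel.

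A cleaner organization, which I would prefer to present, is to change radial frequency variable: set $s = (1-(1+\delta^2|\xi|^2)^{-1/2})/\delta^2$, so that $\zeta_1(\delta|\xi|) = \sqrt{1-2\delta^2 s}$ is a function of $s$ and the phase becomes $(\text{angular part of }\xi)\cdot x' \cdot \sqrt{1-2\delta^2 s}\,|\xi| - s x_N$ with $|\xi| = |\xi|(s)$. After this substitution the $x_N$-integral gives $\delta(s-s')$ directly and the whole computation looks uniformly like the $\delta=0$ case with a bounded perturbation $\sqrt{1-2\delta^2 s}$ of the identity in the spatial phase; the extra operator $\left(\frac{-\Delta}{-\delta^2\Delta+1}\right)^{1/4}$ was inserted precisely to make the Jacobian of this substitution disappear, which is why its symbol is $|\xi|^{1/2}(1+\delta^2|\xi|^2)^{-1/4}$. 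I would then invoke the standard $TT^*$ / Fourier-restriction computation underlying local smoothing (restricting $\hat\psi$ to spheres, convolving two spherical measures against $\hat a$) to get the bound with a constant depending only on $\|\hat a\|_{L^1}$ or a few Schwartz seminorms of $a$, uniformly in $\delta\in(0,\infty)$ (noting that for $\delta$ bounded away from $0$ the symbol $(1+\delta^2|\xi|^2)^{-N/4}$ already gives decay, so only small $\delta$ is delicate).

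The main obstacle I anticipate is not any single estimate but verifying that the change of variables $s = s(|\xi|)$ and the resulting spherical-measure convolution bounds are genuinely uniform as $\delta\to 0$ — i.e. controlling the large-frequency regime $\delta^2|\xi|^2 \gtrsim 1$, where the phase reparametrization $\zeta_1(\delta|\xi|),\zeta_2(\delta|\xi|)$ degenerates (both $\zeta_1$ and $\zeta_2$ tend to $0$). In that regime the oscillatory structure weakens, but the amplitude factor $(1+\delta^2|\xi|^2)^{-N/4}$ and the inserted symbol $(1+\delta^2|\xi|^2)^{-1/4}$ supply enough polynomial decay to close the estimate by a crude absolute-value bound rather than by stationary phase; making this dichotomy (small $\delta|\xi|$: reduce to $\delta=0$ local smoothing; large $\delta|\xi|$: brute force using the decaying symbol) precise, with all constants tracked, is the technical heart of the argument.
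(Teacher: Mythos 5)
Your plan is essentially the paper's argument: expand the square, integrate out $x_N$ (using that $a$ depends only on $x'$) to produce a delta function in the radial frequency variable, change radial variables via $\Phi_\delta(k)=k^2\zeta_2(\delta k)/2$ whose Jacobian is exactly cancelled by the inserted symbol $|\xi|^{1/2}(1+\delta^2|\xi|^2)^{-1/4}$, and run Schur's test on the resulting kernel using rapid decay of $\hat a$. The dichotomy you anticipate for the large-$\delta|\xi|$ regime turns out to be unnecessary --- after the bookkeeping the Schur quantity is identically equal to $1$, uniformly in $\delta$ --- and the only point your plan omits is the degenerate case $N=2$ (where $\Sph^{N-2}=\Sph^0$ is two points and the spherical-average decay estimate does not apply in the same form), which the paper disposes of separately via the trivial bound $\|\hat a\|_{L^\infty}<\infty$.
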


Let us show that this lemma implies the proposition.

\begin{proof}[Proof of Proposition \ref{aeconv}]
Let $K\subset\R^N$ be compact and let $a\in\mathcal S(\R^{N-1})$ be radial with $\inf_{x\in K} a(x')>0$ (for instance a Gaussian). Moreover, let $\Lambda>0$ and denote by $P_\Lambda$ multiplication by the characteristic function of $B_\Lambda$, the ball of radius $\Lambda$, in Fourier space. We decompose, with $P_\Lambda^\bot =1-P_\Lambda$,
$$
\chi_K \mathcal T_{\delta_n}\psi_n = \chi_K \mathcal T_{\delta_n}P_\Lambda \psi_n + \chi_K \mathcal T_{\delta_n}P_\Lambda^\bot \psi_n \,.
$$
According to Lemma \ref{locsmooth} we have
\begin{align*}
\left\| \chi_K \mathcal T_{\delta_n}P_\Lambda^\bot \psi_n \right\| 
& \leq \left( \inf_{x\in K} a(x') \right)^{-1} \left\| a \mathcal T_\delta \left(\frac{-\Delta}{-\delta_n^2\Delta +1} \right)^{1/4} \right\| \left\| \left(\frac{-\delta_n^2\Delta +1}{-\Delta} \right)^{1/4} P_\Lambda^\perp \right\| \|\psi_n\| \\
& \leq \left( \inf_{x\in K} a(x') \right)^{-1} C_{a^2}^{1/2} \left( \delta_n^2 + \Lambda^{-2} \right)^{1/4} \sup_n \|\psi_n\| \,.
\end{align*}
The right side can be made arbitrarily small by choosing $\Lambda$ large, uniformly for large $n$. Therefore it suffices to prove that $\chi_K \mathcal T_{\delta_n}P_\Lambda \psi_n$ tends to zero for each fixed $\Lambda$. We will deduce this using dominated convergence. In fact, we have for each fixed $x\in\R^N$,
$$
\chi_{B_\Lambda}(\xi) e^{i\left( \xi\cdot x' \zeta_1(\delta|\xi|) - \frac12 \xi^2 x_N \zeta_2(\delta|\xi|)\right)} (1+\delta^2\xi^2)^{-N/4} \to \chi_{B_\Lambda}(\xi) e^{i\left( \xi\cdot x' - \frac12 \xi^2 x_N \right)}
$$
\emph{strongly} in $L^2_\xi(\R^{N-1})$. (This can also be proved with the help of dominated convergence.) Thus, $\hat\psi_n\rightharpoonup 0$ implies that for any fixed $x\in\R^N$,
$$
\mathcal T_{\delta_n}P_\Lambda \psi_n (x) = \left( \chi_{B_\Lambda} e^{-i\left( \xi\cdot x' \zeta_1(\delta|\xi|) - \frac12 \xi^2 x_N \zeta_2(\delta|\xi|)\right)} (1+\delta^2\xi^2)^{-N/4}, \hat\psi_n \right) \to 0 \,.
$$
Moreover, we have
\begin{align*}
\left| \mathcal T_{\delta_n}P_\Lambda \psi_n (x) \right| & \leq \left\| \chi_{B_\Lambda} e^{-i\left( \xi\cdot x' \zeta_1(\delta|\xi|) - \frac12 \xi^2 x_N \zeta_2(\delta|\xi|)\right)} (1+\delta^2\xi^2)^{-N/4} \right\| \left\| \hat\psi_n \right\| \\
& \leq |B_\Lambda|^{1/2} \sup_n \|\psi_n\| \,.
\end{align*}
Thus, dominated convergence implies $\chi_K \mathcal T_{\delta_n}P_\Lambda \psi_n\to 0$ in $L^2(\R^N)$, which proves the first part of the proposition.

The second part follows by a standard diagonalization argument using a sequence of balls with diverging radii and the fact that an $L^1$ convergent sequence has an almost everywhere convergent subsequence.
\end{proof}

It remains to give the

\begin{proof}[Proof of Lemma \ref{locsmooth}]
Expanding the square, the left side of the term in the lemma reads
\begin{multline*}
2\pi \iint_{\R^{N-1}\times\R^{N-1}} \overline{\hat{\psi}(\xi)}\frac{|\xi|^{1/2}}{(1+\delta^2\xi^2)^{(N+1)/4}}\hat{a}(\xi \zeta_1(\delta|\xi|)-\xi' \zeta_1(\delta|\xi'|))\times\\
\times\delta\left(\frac{\xi^2}{2}\zeta_2(\delta|\xi|)-\frac{\xi'^2}{2}\zeta_2(\delta|\xi'|)\right)\frac{|\xi'|^{1/2}}{(1+\delta^2\xi'^2)^{(N+1)/4}} \hat{\psi}(\xi')\,d\xi\,d\xi'.
\end{multline*}
By the Schur test for boundedness, the lemma will follow if we can bound
\begin{align*}
\sup_{\xi}\int_{\R^{N-1}}\frac{|\xi|^{1/2}|\xi'|^{1/2}}{(1+\delta^2 \xi^2)^{(N+1)/4}(1+\delta^2 \xi'^2)^{(N+1)/4}} & |\hat{a}(\xi \zeta_1(\delta|\xi|)-\xi' \zeta_1(\delta|\xi'|))| \times \\
& \times \delta\left(\frac{\xi^2}{2}\zeta_2(\delta|\xi|)-\frac{\xi'^2}{2}\zeta_2(\delta|\xi'|)\right)\,d\xi'
\end{align*}
independently of $\delta$. In order to perform the $\xi'$ integral we write $\xi'=k\omega$ with $\omega\in\Sph^{N-2}$ and $k>0$. The functions $\Phi_\delta(k)= k^2\zeta_2(\delta k)/2 =\delta^{-2}\left( 1 - 1/\sqrt{1+\delta^2 k^2}\right)$ is a strictly increasing function, so we can change variables $\kappa = \Phi_\delta(k)$. We use the fact that
$$
\int_0^\infty F(k)\, \delta(\Phi_\delta(|\xi|)-\Phi_\delta(k))\,dk=\int_0^{\delta^{-2}} F(\Phi_\delta^{-1}(\kappa))\, \delta(\Phi_\delta(|\xi|)-\kappa)\frac{d\kappa}{|\Phi_\delta'(\Phi_\delta^{-1}(\kappa))|}=\frac{F(|\xi|)}{|\Phi_\delta'(|\xi|)|}
$$
with $\Phi_\delta'(k) = k(1+\delta^2 k^2)^{-3/2}$. So Schur's test amounts to estimating 
$$
\sup_\xi\frac{|\xi|^{N-2}}{(1+\delta^2\xi^2)^{(N-2)/2}} \int_{\Sph^{N-2}}|\hat{a}((\xi -|\xi|\omega)\zeta_1(\delta|\xi|))|\,d\omega \,.
$$
When $N=2$, this is equal to
$$
\sup_\xi \left( |\hat{a}(0)| + |\hat{a}(2\xi \zeta_1(\delta|\xi|))| \right),
$$
which is bounded since $\hat a$ is bounded.

In the remainder of the proof we assume $N\geq 3$. Since $a$ is assumed to be radial, by rotation invariance we may choose $\xi=|\xi|(0,\ldots,0,1)$ and then the integral over the sphere becomes
$$
|\Sph^{N-3}| \int_0^\pi|\hat{a}(2\sin(\theta/2) |\xi| \zeta_1(\delta|\xi|)) | (\sin\theta)^{N-3}\,d\theta \,.
$$
We distinguish between two regions: when $2\sin(\theta/2) |\xi| \zeta_1(\delta|\xi|)\geq 1$, then we estimate $|\hat{a}(\xi')|\le c_n|\xi'|^{-n}$ for any $n\in\N$ and obtain 
\begin{multline*}
  \int_{2\sin(\theta/2) |\xi| \zeta_1(\delta|\xi|)\geq 1} |\hat{a}(2\sin(\theta/2) |\xi| \zeta_1(\delta|\xi|)) | (\sin\theta)^{N-3}\,d\theta \\
  \le c_n (2|\xi|\zeta_1(\delta|\xi|))^{-n} \int_{\sin(\theta/2)\geq(2|\xi|\zeta_1(\delta|\xi|))^{-1}}\frac{(\sin\theta)^{N-3}}{(\sin(\theta/2))^n}\,d\theta,
\end{multline*}
Fix $n\geq N-4$. Then for large $|\xi|$, the integral blows up as 
\begin{align*}
\int_{\sin(\theta/2)\geq(2|\xi|\zeta_1(\delta|\xi|))^{-1}} \frac{(\sin\theta)^{N-3}}{(\sin(\theta/2))^n}\,d\theta& \sim 2^n \int_{\theta \geq(|\xi|\zeta_1(\delta|\xi|))^{-1}} \theta^{N-3-n}\,d\theta \\
& = \frac{2^n}{n-N+3} (|\xi| \zeta_1(\delta|\xi|))^{-(N-2-n)}.
\end{align*}
Thus, we find that 
$$
\int_{2\sin(\theta/2) |\xi| \zeta_1(\delta|\xi|)\geq 1} |\hat{a}(2\sin(\theta/2) |\xi| \zeta_1(\delta|\xi|)) | (\sin\theta)^{N-3}\,d\theta
\lesssim_n (|\xi| \zeta_1(\delta|\xi|))^{-(N-2)}.
$$
In the region $2\sin(\theta/2) |\xi| \zeta_1(\delta|\xi|)\leq 1$, we estimate $|\hat{a}|\leq c$ and obtain 
\begin{multline*}
  \int_{2\sin(\theta/2) |\xi| \zeta_1(\delta|\xi|)\leq 1} |\hat{a}(2\sin(\theta/2) |\xi| \zeta_1(\delta|\xi|)) | (\sin\theta)^{N-3}\,d\theta \\
  \le c \int_{\theta\leq(|\xi|\zeta_1(\delta|\xi|))^{-1}} |\theta|^{N-3}\,d\theta
  \lesssim (|\xi| \zeta_1(\delta|\xi|))^{-(N-2)}.
\end{multline*}

Inserting this bound into the supremum in Schur's test, we obtain
$$
\sup_\xi\frac{1}{(1+\delta^2\xi^2)^{(N-2)/2}}\frac{1}{\zeta_1(\delta|\xi|)^{N-2}} = 1 \,.
$$
This proves the lemma.
\end{proof}


\section{Compactness}\label{sec:comp}

In this section we prove a refinement of the Stein--Tomas inequality and deduce a compactness theorem modulo modulations and concentrations. We recall that the convergence $f_n\rightharpoonup_{\text{conc}}$ was introduced in terms of the operators $\mathcal B_{R,\delta}$ with $R\in\cO(N)$ and $\delta>0$ which identify pairs of functions on $L^2(\R^{N-1})$ with functions on $L^2(\Sph^{N-1})$. The parameter $R\in\cO(N)$ determines the equator along which we cut the function in $L^2(\Sph^{N-1})$ into two pieces. The parameter $\delta>0$ corresponds to a scaling in $L^2(\R^{N-1})$. The precise definition of these operators is given in \eqref{eq:defi-B}. The refined Stein--Tomas inequality is stated in Subsection \ref{sec:steintomref}, where we also use it to deduce the compactness theorem, and is proved in Subsection \ref{sec:steintomrefproof} (see also Appendix \ref{sec:strichref}).


\subsection{Refinement of the Stein--Tomas inequality}\label{sec:steintomref}

Our refined Stein--Tomas inequality depends on a parameter $\epsilon\in (0,1)$ that will be chosen small enough and that will not always be reflected in the notation. Given this parameter we consider for any $\theta\in\Sph^{N-1}$ the cap
$$
C(\theta) := \left\{ \omega\in\Sph^{N-1}:\ \theta\cdot\omega > \sqrt{1-\epsilon^2} \right\} \,,
$$
and we also pick an orthogonal matrix $R_\theta\in\cO(N)$ mapping the north pole to $\theta$:
$$R_\theta(0,\ldots,0,1)=\theta.$$
To formulate our refinement of the Stein--Tomas inequality we need an analogue of dyadic cubes on the sphere. Let $\mathcal D$ denotes the set of all dyadic cubes in $\R^{N-1}$, that is, the union over $j\in\Z$ of all cubes of side length $2^j$ with corners on $(2^j\Z)^{N-1}$. For $\theta\in\Sph^{N-1}$ and $Q\in\mathcal D$ we let
$$
L_\theta(Q) := R_\theta(L(Q)) \,,
$$
where $L$ stands for ``lift'' and
$$
L(Q) := \{ \omega\in\Sph^{N-1}:\ \omega'\in Q \,,\ \omega_N> 0 \}.
$$
Notice that 
$$L_\theta(Q)=\{ \omega\in\Sph^{N-1}:\ P_{\theta^\perp}(\omega)\in Q\,,\ \omega\cdot\theta>0\},$$
where $P_{\theta^\perp}:\R^N\to\R^N$ is the orthogonal projection on $\theta^\perp$. 
By compactness of the sphere, there is a finite $A\in\N$ and points $\theta_\alpha\in\Sph^{N-1}$, $\alpha=1,\ldots,A$, such that
$$
\bigcup_{\alpha=1}^A C(\theta_\alpha) = \Sph^{N-1} \,.
$$
Correspondingly, we choose non-negative continuous functions $\chi_\alpha$, $\alpha=1,\ldots,A$, with
$$
\sum_{\alpha=1}^A \chi_\alpha = 1
\qquad\text{and}\qquad
\supp\chi_\alpha \subset C(\theta_\alpha) 
\ \text{for all}\ \alpha =1,\ldots,A \,.
$$

\begin{proposition}\label{steintomref}
There are $\epsilon\in(0,1)$, $C>0$ and $\sigma\in (0,1)$ such that for any $f\in L^2(\Sph^{N-1})$,
\begin{equation}
\label{eq:steintomref}
\| \check f\|_{L^q(\R^N)} \leq C \left( \sup_\alpha \sup_{Q\in\mathcal D} |Q|^{-1/2} \left\| \left(\1_{L_{\theta_\alpha}(Q)} \chi_\alpha f \right)^\vee \right\|_{L^\infty(\R^N)} \right)^{1-\sigma} \|f\|^\sigma \,.
\end{equation}
\end{proposition}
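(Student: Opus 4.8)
The plan is to deduce the refined Stein--Tomas inequality \eqref{eq:steintomref} from a bilinear restriction estimate for the sphere, in the spirit of Tao's proof of the inverse Strichartz theorem \cite{Tao-09,Tao-03}. First I would reduce matters to a single cap: using the partition of unity $\sum_\alpha \chi_\alpha = 1$ and the triangle inequality, it suffices to bound $\|(\chi_\alpha f)^\vee\|_{L^q}$ for each $\alpha$, so after a rotation by $R_{\theta_\alpha}$ we may assume the function is supported in a fixed small cap around the north pole, which we can view (via a graph parametrization) as living on a piece of a curved hypersurface that is a small perturbation of a paraboloid. On such a piece, the two-dimensional Whitney-type decomposition applies: write $\Sph^{N-1}\times\Sph^{N-1}$ (restricted to the cap) as a union over dyadic scales of pairs of cubes $Q, Q'$ that are comparable in size and separated by a comparable distance, so that $|\check f|^2 = \sum_{Q\sim Q'} \overline{(\1_Q f)^\vee}\,(\1_{Q'} f)^\vee$ up to the diagonal.

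Next I would invoke Tao's sharp bilinear restriction theorem \cite{Tao-03} on each Whitney pair. For transversal caps $Q,Q'$ of size $\sim\mu$, Tao's estimate gives $\|(\1_Q f)^\vee (\1_{Q'} f)^\vee\|_{L^{q/2}(\R^N)} \lesssim \mu^{\beta} \|\1_Q f\|_2 \|\1_{Q'} f\|_2$ for an appropriate exponent, where the power of $\mu$ quantifies the gain from transversality; summing over the Whitney decomposition (using the almost-orthogonality of the cubes in $L^2$, i.e. $\sum_Q \|\1_Q f\|_2^2 = \|f\|_2^2$, together with Cauchy--Schwarz in the dyadic parameter, which converges because of the $\mu^\beta$ gain) recovers the full Stein--Tomas bound $\|\check f\|_q \lesssim \|f\|_2$. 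The refinement comes from interpolating this: bound one of the two factors $\|(\1_{Q'} f)^\vee\|$ in the $L^\infty$ norm appearing on the right-hand side of \eqref{eq:steintomref} — more precisely, replace $\|(\1_{Q'}f)^\vee\|_{L^{q'}}$–type contributions by a factor $(\sup_Q |Q|^{-1/2}\|(\1_Q f)^\vee\|_\infty)$ raised to a small power $1-\sigma$, picking up the compensating $\|f\|_2^\sigma$ from the remaining $L^2$ mass via Hölder/interpolation between $L^2$ and $L^\infty$ at the level of the lifted local pieces. The constraint is to do the interpolation so that the sum over dyadic scales still converges, which is exactly where $\sigma$ must be taken close to $1$ (equivalently $\epsilon$ small).

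I would organize the bookkeeping by noting that $|Q|^{-1/2}\|(\1_{L_{\theta_\alpha}(Q)}\chi_\alpha f)^\vee\|_\infty$ is, up to the fixed number $A$ of caps and the bounded multipliers $\chi_\alpha$, exactly the quantity controlled by Tao's single-cap analysis after the graph parametrization — the normalization $|Q|^{-1/2}$ is dictated by the $\mathcal B_\delta$ scaling \eqref{eq:bunitary} and makes the quantity scale-invariant, which is essential for summing dyadically. The proof thus amounts to: (i) reduce to one cap; (ii) Whitney-decompose the cap pair; (iii) apply Tao's bilinear estimate \cite{Tao-03} scale-by-scale; (iv) interpolate each bilinear piece against the sup-norm quantity and sum, using $L^2$ almost-orthogonality and the transversality gain for convergence.

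The main obstacle I anticipate is step (iii)–(iv): arranging the interpolation and the summation over dyadic scales simultaneously, so that the extracted factor is genuinely the scale-invariant $L^\infty$ quantity in \eqref{eq:steintomref} (not a scale-dependent proxy) while the remaining series in $\mu$ still converges with a positive power $\sigma$ of $\|f\|_2$ left over. A secondary technical point — but one that matters because the argument must work in all dimensions, not just even $q$ — is verifying that Tao's bilinear theorem applies to our cap, i.e. that a small graph piece of the sphere has the requisite curvature/transversality hypotheses uniformly; this is where the smallness of $\epsilon$ (so the cap is close to flat and genuinely paraboloid-like) is used, and it must be checked that all implicit constants are uniform over the finitely many caps $C(\theta_\alpha)$.
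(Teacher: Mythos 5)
Your outline captures the paper's high-level strategy (reduce to one cap via the partition of unity and a graph parametrization; perform a Whitney decomposition; use Tao's sharp bilinear estimate \cite{Tao-03}; interpolate with an $L^\infty$ quantity and sum dyadically). However, step (iv) as you describe it contains a real gap that would cause the argument to fail.

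If you interpolate each bilinear piece $\Psi_Q\Psi_{Q'}$ term by term — producing, after the scaling bookkeeping, a bound of the shape
$$
\left\|\Psi_Q\Psi_{Q'}\right\|_{L^{q/2}}\lesssim\left(\sup_{Q''}|Q''|^{-1/2}\|\Psi_{Q''}\|_{L^\infty}\right)^{2(1-\theta)}\left(\|\psi_Q\|_2\|\psi_{Q'}\|_2\right)^{\theta}
$$
with $\theta<1$ — and then sum using only the $\ell^1$ triangle inequality, the series $\sum_{Q\sim Q'}(\|\psi_Q\|_2\|\psi_{Q'}\|_2)^\theta$ diverges. (Take $\hat\psi$ spread equally over $K$ cubes at one scale: the sum is $\sim K\cdot K^{-\theta}\to\infty$.) The ``$L^2$ almost-orthogonality plus Cauchy--Schwarz'' you invoke does not repair this; it is the right picture for the unrefined Stein--Tomas bound, but the refinement genuinely needs more. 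The missing ingredient is an improved triangle inequality in the output space: the bilinear pieces $\Psi_Q\Psi_{Q'}$ have space-time Fourier transforms supported in nearly disjoint slabs, which yields an $\ell^{q^*}$-almost-orthogonality estimate (with $q^*=\min\{q/2,(q/2)'\}>1$)
$$
\Bigl\|\sum_{Q\sim Q'}\Psi_Q\Psi_{Q'}\Bigr\|_{L^{q/2}}^{q^*}\lesssim\sum_{Q\sim Q'}\|\Psi_Q\Psi_{Q'}\|_{L^{q/2}}^{q^*}\,,
$$
the content of the paper's Lemma \ref{lem:triangle-ineq-improved} (in the style of Tao--Vargas--Vega and \cite{KilVis-book}). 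Only with this extra room can one split the $\ell^{q^*}$-sum as $\sup^{q^*-r}\cdot\ell^r$ with $1<r<q^*$, apply the $L^\infty$--$L^p$ interpolation \emph{only to the supremum factor}, and control the remaining $\ell^r$ sum at full $L^2$ strength by a separate interpolation with the trivial $L^\infty$ bound plus a Besov-type dyadic summation lemma (the paper's Lemma \ref{lem:uniform-bound-sum-tao}, with $f=|\hat\psi|^s$). So the interpolation must be applied once, to a sup that has been extracted \emph{before} summing, not scale by scale. Finally, a minor point: your closing remark about $q$ being even is off-target here — the evenness of $q$ is what makes the Christ--Shao expansion approach in dimensions $2,3$ work, but the bilinear/Whitney route you (and the paper) take never uses it.
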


Note that this proposition implies the standard Stein--Tomas inequality: indeed, for any $\alpha\in\{1,\ldots,A\}$ and $Q\in\mathcal D$ we have
\begin{align*}
|Q|^{-1/2} \left\| \left(\1_{L_{\theta_\alpha}(Q)} \chi_\alpha f \right)^\vee \right\|_{L^\infty(\R^N)}
& \leq (2\pi)^{-N/2} |Q|^{-1/2} \left\|\1_{L_{\theta_\alpha}(Q)} \chi_\alpha f \right\|_{L^1(\Sph^{N-1})}\\
& \leq (2\pi)^{-N/2} |Q|^{-1/2} \left\|\1_{L_{\theta_\alpha}(Q)} \chi_\alpha \right\|_{L^2(\Sph^{N-1})} \left\| f \right\|_{L^2(\Sph^{N-1})}\\
& \leq C_N \left\| f \right\|_{L^2(\Sph^{N-1})} \,,
\end{align*}
so the right side of \eqref{eq:steintomref} is bounded by a constant times $\|f\|$.

Other refinements of the Stein--Tomas inequality can be found in \cite[Thm. 4.2]{MoyVarVeg-99} in the case $N=3$ or in \cite[Prop. 2]{Oliveira-14}, \cite[Prop. 4.1]{Shao-15} in the case $N=2$. These refinements involve $X_{p}$-norms on $f$, but it is not obvious how to deduce our compactness result (Corollary \ref{concentration}) from these $X_{p}$ estimates. The key feature of our refinement is the $L^\infty$ norm of the Fourier transform on the right side of \eqref{eq:steintomref}, leads almost immediately to Corollary \ref{concentration}. This is reminiscent of the route taken in \cite{Tao-09,KilVis-book} in connection with the Strichartz inequality, where also $L^\ii$ bounds on the Fourier transform are used instead of the original $X_p$-spaces approach of \cite{Bourgain-98,MerVeg-98,CarKer-07,BegVar-07}.

We also point out a certain similarity with the description of lack of compactness in homogeneous Sobolev spaces \cite{Ge}. In this case analogous bounds in terms of $L^\infty$ norms of the Fourier transform are due to \cite{GeMeOr} (see also \cite[Prop. 4.8]{KilVis-book}) and have been used to establish compactness results \cite{Ge} (see also \cite[Prop. 4.9]{KilVis-book}).

We defer the proof of Proposition \ref{steintomref} to Subsection \ref{sec:steintomrefproof} and use it now to deduce our compactness theorem. The relation between our convergence notion $f_n\rightharpoonup_{\text{conc}} 0$ and the norm appearing in Proposition \ref{steintomref} is clarified in the following lemma.

\begin{lemma}\label{lem:Linfty-estimate-Tao}
The following holds provided $\epsilon>0$ is sufficiently small. If $(f_n)$ is a bounded sequence in $L^2(\Sph^{N-1})$ with $f_n\rightharpoonup_{\text{conc}}0$, then
  \begin{equation}\label{eq:Linfty-estimate-Tao}
    \lim_{n\to\ii} \sup_\alpha \sup_{Q\in\mathcal D} |Q|^{-1/2} \left\| \left(\1_{L_{\theta_\alpha}(Q)} \chi_\alpha f_n \right)^\vee \right\|_{L^\infty(\R^N)} = 0.
  \end{equation}
\end{lemma}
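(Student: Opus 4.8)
\textbf{Proof plan for Lemma \ref{lem:Linfty-estimate-Tao}.}
The plan is to argue by contradiction. Suppose the limit in \eqref{eq:Linfty-estimate-Tao} does not hold; then there is $c>0$, a subsequence (still denoted $f_n$), indices $\alpha_n\in\{1,\ldots,A\}$ and dyadic cubes $Q_n\in\mathcal D$ with
$$
|Q_n|^{-1/2} \left\| \left(\1_{L_{\theta_{\alpha_n}}(Q_n)} \chi_{\alpha_n} f_n \right)^\vee \right\|_{L^\infty(\R^N)} \geq c > 0 \,.
$$
Since there are finitely many $\alpha$, we may pass to a further subsequence so that $\alpha_n=\alpha$ is constant, and by the rotation invariance built into $\rightharpoonup_{\text{conc}}$ (here is where keeping $R_n\in\cO(N)$ in Definition \ref{conc} is essential) we may rotate so that $\theta_\alpha$ is the north pole, i.e. $R_{\theta_\alpha}=\mathrm{Id}$, and $L_{\theta_\alpha}(Q_n)=L(Q_n)=\{\omega:\ \omega'\in Q_n,\ \omega_N>0\}$. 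Because $\chi_\alpha$ is supported in the cap $C(\theta_\alpha)$ and $\epsilon$ is small, the portion of the sphere we see is a small graph over $\R^{N-1}$, so writing $\omega=(\omega',\omega_N)$ with $\omega_N=\sqrt{1-|\omega'|^2}$ and $|\omega'|\leq\epsilon$, the quantity above is essentially an integral
$$
|Q_n|^{-1/2}\sup_{x\in\R^N}\left| \int_{Q_n} e^{ix\cdot\omega} \chi_\alpha(\omega)\, f_n(\omega)\, \frac{d\omega'}{\omega_N} \right| \geq c' > 0 \,.
$$

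Next I would unfold this in coordinates adapted to $Q_n$. Let $2^{j_n}$ be the side length of $Q_n$ and $y_n\in(2^{j_n}\Z)^{N-1}$ its corner (or center), so $Q_n = y_n + [0,2^{j_n})^{N-1}$. Set $\delta_n := 2^{j_n}$. Then, changing variables $\omega'= y_n+\delta_n\eta$ with $\eta$ ranging over the unit cube, and recording the point $x=(x',x_N)$ achieving (up to a factor $2$) the supremum, the estimate becomes, after absorbing the oscillatory factors $e^{ix'\cdot y_n/\delta_n}$ type phases, a statement that a certain oscillatory integral of $\mathcal B_{R_n,\delta_n}^{-1}(e^{-ia_n\cdot\omega}f_n)$ — or rather of its Fourier transform — tested against a fixed $L^2$ profile (the characteristic function of the unit cube times a smooth weight coming from $\chi_\alpha$ and the Jacobian $(1+\xi^2)^{N/4}$, which $\to$ the constant weight as $\delta_n\to 0$) stays bounded away from zero. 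The key points are: the scaling $\delta_n$ matches exactly the $\mathcal B_{\cdot,\delta_n}$ scaling in Definition \ref{conc}; the modulation phase $e^{ix_n\cdot\omega}$ at the maximizing point $x_n$ supplies the $(a_n)$ in that definition; and the rotation $R_n$ (which may include a rotation by a large angle $\sim \delta_n^{-1}$ seen at scale $\delta_n^{-1}$, needed to move the center $y_n/\delta_n$ back to the origin) supplies $(R_n)$. One also checks $\sup_n\delta_n<\infty$, which holds because the sphere has bounded diameter so only boundedly many scales $2^{j_n}$ with $2^{j_n}$ large contribute once $\chi_\alpha f_n$ is supported in a fixed cap; more precisely $Q_n$ can be taken with $|Q_n|\lesssim 1$ since caps are bounded, and if $|Q_n|\to 0$ we are in the concentration regime, if $|Q_n|$ stays bounded below we are in the non-concentration regime and the bound follows directly from weak convergence against the fixed test profile.

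Either way, pairing against the fixed profile $\1_{[0,1]^{N-1}}\cdot w(\eta)$ (with $w$ a fixed bounded function, the limit of the smooth weights) and using that $\mathcal B_{R_n,\delta_n}^{-1}(e^{-ia_n\cdot\omega}f_n)\rightharpoonup(0,0)$ in $L^2(\R^{N-1})^2$ by the hypothesis $f_n\rightharpoonup_{\text{conc}}0$, together with the fact that the smooth weights converge strongly in $L^2$ of the unit cube to $w$ (so the error from replacing the $n$-dependent weight by $w$ is $o(1)$ uniformly, by Cauchy--Schwarz and boundedness of $\mathcal B_{R_n,\delta_n}^{-1}(e^{-ia_n\cdot\omega}f_n)$), forces the left side to tend to $0$, contradicting the lower bound $c'>0$. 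The main obstacle is the bookkeeping in the middle paragraph: extracting, from an arbitrary dyadic cube $Q_n$ inside an arbitrary cap $C(\theta_\alpha)$ and an arbitrary near-optimal point $x_n\in\R^N$, precisely the triple $(a_n,R_n,\delta_n)$ that feeds Definition \ref{conc}, checking $\sup_n\delta_n<\infty$, and verifying that the $n$-dependent smooth cutoff/Jacobian weight converges strongly to a fixed $L^2$ profile so that weak convergence of the rescaled functions actually kills the pairing. The stationary-phase/graph approximation of the cap by a paraboloid, already used implicitly via the operators $\mathcal T_\delta$ and $\mathcal B_\delta$ in Section \ref{sec:top}, makes the oscillatory factors harmless, so no new hard analysis is needed beyond that identification.
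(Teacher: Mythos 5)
Your strategy (contradiction, rotation to the north pole, rescaling via $\mathcal B_{R_n,\delta_n}$ with the modulation $(a_n)$ coming from the extremizing point, the rotation $(R_n)$ moving the cube's center back to the origin, and the scaling $(\delta_n)$ matching the cube's side length, then pairing the weakly null rescaled sequence against a strongly convergent test profile) is exactly the paper's approach. Two of the ``bookkeeping'' steps you flag as obstacles are, however, genuinely important and not quite as cheap as your sketch suggests, so let me point them out. First, to ensure $\sup_n\delta_n<\infty$ one cannot simply say ``caps are bounded so $|Q_n|\lesssim 1$'': a dyadic cube $Q_n$ may be arbitrarily large, and the reduction to $|Q_n|\lesssim 1$ uses the replacement trick of passing to the smallest dyadic cube $\tilde Q_n$ with $\1_{L(\tilde Q_n)}\chi_\alpha = \1_{L(Q_n)}\chi_\alpha$, which only increases the quantity $|Q|^{-1/2}|(\1_{L(Q)}\chi_\alpha f_n)^\vee(x_n)|$ and forces $\tilde Q_n$ to live at scale $\cO(\epsilon)$. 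Second, the linear change of variables $\omega'=y_n+\delta_n\eta$ is not the change of variables underlying $\mathcal B_{R,\delta}$, which parametrizes the sphere by $\xi\mapsto \xi/\sqrt{1+\xi^2}$; in the correct $\xi$ variable the constraint $\omega'\in Q_n$ becomes a nonlinearly shifted cube condition (the shift is $\bigl(1-1/\sqrt{1+\delta_n^2|\xi|^2}\bigr)k_n$ up to the rotation), and establishing strong $L^2$ convergence of the test profile $h_n$ requires checking almost everywhere convergence (splitting on $\lim\delta_n>0$ versus $\lim\delta_n=0$) plus a uniform compact-support bound on $h_n$ that comes from the inequality $1-(1+x)^{-1/2}\le\min\{1,x\}$ together with $\delta_n|k_n|=\cO(\epsilon)$. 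These are exactly the points the paper's proof handles; once they are filled in, your argument is complete.
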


\begin{proof}
We argue by contradiction: assume that there exists $\epsilon'>0$, $\alpha\in\{1,\ldots,A\}$, sequences $(x_k)\subset\R^N$, $(Q_k)\subset\mathcal D$ and a subsequence $(f_{n_k})$ such that for all $k$
\begin{equation}
\label{eq:compest}
|Q_{k}|^{-1/2} \left|\left(\1_{L_{\theta_\alpha}(Q_{k})} \chi_\alpha f_{n_k} \right)^\vee(x_{k})\right|\ge\epsilon'.
\end{equation}
We show that the left side converges to zero, obtaining the desired contradiction. In the sequel, we forget about the subsequence and write $(f_n)_n$ instead of $(f_{n_k})_k$. We may also assume that $\theta_\alpha=(0,\ldots,0,1)$ up to replacing $f_n$ by $f_n\circ R_{\theta_\alpha}$, which does not change the assumption $f_n\rightharpoonup_{\text{conc}}0$. We thus write $L(Q)$ and $\chi$ instead of $L_{\theta_\alpha}(Q)$ and $\chi_\alpha$. We may assume that the sets $L(Q_n)$ intersect $\{\omega\subset\Sph^{N-1}:\ \omega_N > \sqrt{1-\epsilon^2}\}$ (which contains the support of $\chi$), for otherwise the left side of \eqref{eq:compest} vanishes, and therefore the cubes $Q_n$ all intersect $\{\xi\subset\R^{N-1}:\ |\xi|<\epsilon\}$. Let $\tilde Q_n$ be the smallest dyadic cube with $\1_{L(\tilde Q_n)} \chi = \1_{L(Q_n)} \chi$. Since
$$
|\tilde Q_n|^{-1/2}\left| \left(\1_{L(\tilde Q_n)} \chi f_n \right)^\vee(x_n) \right| \geq |Q_n|^{-1/2}\left| \left(\1_{L(Q_n)} \chi f_n \right)^\vee(x_n) \right| \,,
$$
it suffices to prove the convergence to zero with $\tilde Q_n$ in place of $Q_n$. From now on we will write again $Q_n$ instead of $\tilde Q_n$. Let $k_n\in\Z^{N-1}$ and $\delta_n\in2^\Z$ such that
$$
Q_n=\delta_nk_n+[0,\delta_n)^{N-1} \,,
$$
and note that $|Q_n|=\delta_n^{N-1}$. The above redefinition of $Q_n$ guarantees that the sequence $(\delta_n k_n)\subset\R^{N-1}$ belongs to a compact set (of diameter $\cO(\epsilon)$) and that the sequence $(\delta_n)\subset (0,\infty)$ is bounded (by $\cO(\epsilon)$). Thus, after passing to a subsequence if necessary, we may assume that $(\delta_nk_n)$ and $(\delta_n)$ converge.

For any $\theta\in C(0,\ldots,0,1)$, we define a rotation $O_\theta\in\cO(N)$ that sends $(0,\ldots,0,1)$ to $\theta$ in the following fashion: if $\theta=(0,\ldots,0,1)$, we take $O_\theta=\text{Id}$, and if $\theta\neq(0,\ldots,0,1)$, we take $O_\theta=\text{Id}$ on the orthogonal complement of $H=\text{span}((0,\ldots,0,1),\theta)$ and on $H$ we take 
$$O_\theta=\left(\begin{array}{cc}
                  \omega_N & - |\omega'| \\
                  |\omega'| & \omega_N
                 \end{array}\right)
$$
in the orthonormal basis $((0,\ldots,0,1),\omega'/|\omega'|)$ of $H$ (with the notation $\omega=\omega'+\omega_N(0,\ldots,0,1)$, $\omega'\in\R^{N-1}\times\{0\}$). This definition ensures that $\theta\mapsto O_\theta$ is continuous on $C(0,\ldots,0,1)$. Next, we define
$$
\theta_n:=(\delta_nk_n,\sqrt{1-\delta_n^2|k_n|^2})\in\Sph^{N-1}
$$
and 
$$
(\phi_n^+,\phi_n^-) := \mathcal B^{-1}_{O_{\theta_n},\delta_n}(e^{ix_n\cdot\omega}f_n) \,.
$$
By choosing $\epsilon>0$ small enough (depending only on $N$) we can guarantee that $\theta_n\cdot\omega>0$ for all $n$ and all $\omega\in\Sph^{N-1}$ with $\omega_N>\sqrt{1-\epsilon^2}$. We conclude that
\begin{align*}
\left(\1_{L(Q_n)} \chi f_n \right)^\vee(x_n)
& = (2\pi)^{-N/2} \int_{\Sph^{N-1}}e^{ix_n\cdot\omega}f_n(\omega)\chi(\omega)\1_{Q_n}(\omega') \,d\omega \\
& = (2\pi)^{-N/2} \int_{\R^{N-1}} \phi_n^+(\xi)h_n(\xi)\,d\xi
\end{align*}
with
\begin{multline*}
h_n(\xi):=(1+\delta_n^2|\xi|^2)^{-N/4} \, \chi\left(\frac{\theta_n+O_{\theta_n}(\delta_n\xi,0)}{\sqrt{1+\delta_n^2|\xi|^2}}\right) \, \1_{Q_n}\left( P\left(\frac{\theta_n+O_{\theta_n}(\delta_n\xi,0)}{\sqrt{1+\delta_n^2|\xi|^2}}\right)\right).
\end{multline*}
and with the projection $P:\R^N\to\R^{N-1}$ defined by $P(\eta',\eta_N):=\eta'$. 

Since $\phi_n^+\rightharpoonup0$ in $L^2(\R^{N-1})$ by assumption, our claim \eqref{eq:compest} will follow if we can prove that $(h_n)$ converges strongly in $L^2(\R^{N-1})$. To do so, we prove that $(h_n)$ converges almost everywhere and that $0\le h_n\le \1_B$ for a centered ball $B$ with (finite) radius independent of $n$.

We begin with the almost everywhere convergence. Since $(\theta_n)$ and $(\delta_n)$ converge and $\chi$ and $\theta\mapsto O_\theta$ are continuous, the sequence
$$
(1+\delta_n^2|\xi|^2)^{-N/4} \, \chi\left(\frac{\theta_n+O_{\theta_n}(\delta_n\xi,0)}{\sqrt{1+\delta_n^2|\xi|^2}}\right)
$$
converges for all $\xi$. If the limit of $(\delta_n)$ is positive, then the cube $Q_n$ converges towards a fixed cube, and thus the sequence 
$$
\1_{Q_n} \left(P\left(\frac{\theta_n+O_{\theta_n}(\delta_n\xi,0)}{\sqrt{1+\delta_n^2|\xi|^2}}\right)\right)
$$
converges almost everywhere in $\xi$. If $\lim_n \delta_n=0$, then we use the fact that
$$
P\left(\frac{\theta_n+O_{\theta_n}(\delta_n\xi,0)}{\sqrt{1+\delta_n^2|\xi|^2}}\right)\in Q_n
$$ 
if and only if 
\begin{equation}
\label{eq:constraintq}
P(O_{\theta_n}(\xi,0))\in\left(1-\frac{1}{\sqrt{1+\delta_n^2|\xi|^2}}\right)k_n+[0,1)^{N-1} \,.
\end{equation}
Since 
$$
\lim_{n\to\ii}\left(1-\frac{1}{\sqrt{1+\delta_n^2|\xi|^2}}\right)k_n=0 \,,
$$
 we also have almost everywhere convergence in the case $\lim_n \delta_n=0$.
 
Let us now show that $0\le h_n\le\1_B$ for a centered ball $B$ with (finite) radius independent of $n$. Since $O_\theta\to\text{Id}$ as $\theta\to(0,\ldots,0,1)$ and $|\theta_n-(0,\ldots,0,1)|=\cO(\epsilon)$ uniformly in $n$, we choose $\epsilon>0$ small enough such that for all $\xi\in\R^{N-1}$ and all $n$,
$$
|P O_{\theta_n}(\xi,0)|\geq\frac12|\xi| \,.
$$
Now assume that $\xi\in\supp h_n$. Then \eqref{eq:constraintq} and the fact that $1-(1+x)^{-1/2}\le \min\{1,x\}$ for all $x\geq 0$ implies that
\begin{align*}
\frac12|\xi| & \le|PO_{\theta_n}(\xi,0)|\le \left(1-\frac{1}{\sqrt{1+\delta_n^2|\xi|^2}}\right)|k_n|+\cO(1)\le \min\{1,\delta_n^2|\xi|^2\} \, |k_n| + \cO(1) \\
& = \min\{\delta_n^{-1}|\xi|^{-1},\delta_n|\xi|\} \, \delta_n |k_n||\xi| + \cO(1)
\le \delta_n |k_n||\xi| + \cO(1) \,,
\end{align*}
Recalling that $\delta_n|k_n|\leq C \epsilon$ and choosing $\epsilon<1/(2C)$, we conclude that $|\xi|=\cO(1)$ uniformly in $n$, which is what we want to prove. This concludes the proof of Lemma \ref{lem:Linfty-estimate-Tao}.
\end{proof}

Combining Proposition \ref{steintomref} with Lemma \ref{lem:Linfty-estimate-Tao} we obtain immediately the following compactness result.

\begin{corollary}\label{concentration}
Let $(f_n)\subset L^2(\Sph^{N-1})$ with $\|f_n\|=1$ satisfy $f_n\rightharpoonup_{\text{conc}} 0$. Then $\check f_n\to 0$ in $L^q(\R^N)$.
\end{corollary}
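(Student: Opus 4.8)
The plan is to chain together the two preceding results, since all the real work has already been done. First I would fix a single parameter $\epsilon\in(0,1)$ that is small enough for both Proposition \ref{steintomref} and Lemma \ref{lem:Linfty-estimate-Tao} to apply. This is legitimate because each of those two statements requires only that $\epsilon$ be below some threshold depending on $N$ alone, so the two thresholds can be met simultaneously; with this choice of $\epsilon$ we also obtain the constants $C>0$ and $\sigma\in(0,1)$ of Proposition \ref{steintomref}, together with the covering points $\theta_\alpha$ and the partition of unity $\chi_\alpha$ appearing in \eqref{eq:steintomref}.

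Next I would apply the refined Stein--Tomas inequality \eqref{eq:steintomref} to $f=f_n$ and use the normalization $\|f_n\|=1$ to get
$$
\|\check f_n\|_{L^q(\R^N)} \leq C \left( \sup_\alpha \sup_{Q\in\mathcal D} |Q|^{-1/2} \left\| \left(\1_{L_{\theta_\alpha}(Q)} \chi_\alpha f_n \right)^\vee \right\|_{L^\infty(\R^N)} \right)^{1-\sigma}.
$$
Since $(f_n)$ is bounded in $L^2(\Sph^{N-1})$ (indeed $\|f_n\|=1$) and $f_n\rightharpoonup_{\text{conc}} 0$ by hypothesis, Lemma \ref{lem:Linfty-estimate-Tao} tells us that the supremum on the right-hand side tends to $0$ as $n\to\infty$. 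Because $1-\sigma>0$, the whole right-hand side then tends to $0$, which gives $\check f_n\to 0$ in $L^q(\R^N)$, as claimed.

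I do not expect a genuine obstacle here: the argument is a direct concatenation of Proposition \ref{steintomref} (the compactness input) and Lemma \ref{lem:Linfty-estimate-Tao} (translating the $\rightharpoonup_{\text{conc}}$ hypothesis into decay of the relevant $L^\infty$-quantity). The only point needing a line of care is the one noted above, namely that the $\epsilon$ in the two ingredients may be taken equal, which is immediate since both smallness requirements depend only on the dimension.
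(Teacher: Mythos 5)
Your proof is correct and is exactly the argument the paper intends: the paper merely states that the corollary follows "immediately" from Proposition \ref{steintomref} and Lemma \ref{lem:Linfty-estimate-Tao}, and your concatenation of the two (together with the observation that a single $\epsilon$ can serve both, since each imposes only a dimension-dependent smallness condition on $\epsilon$, and the cap/partition data appearing in both quantities must of course be the same) is precisely the intended chain. No issues.
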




\subsection{Proof of Proposition \ref{steintomref}}\label{sec:steintomrefproof}

Our goal in this subsection is to prove the refined Stein--Tomas inequality \eqref{eq:steintomref}. We will deduce this inequality from a refinement of a `perturbed Strichartz inequality', which we state next. We use the notation
$$
T(E) := 1-\sqrt{1-E}
\qquad\text{for}\ 0\leq E\leq 1 \,.
$$
and define $\psi_Q$ by $\hat\psi_Q = \chi_Q \hat\psi$ for $Q\in\mathcal D$, the collection of all dyadic cubes. Moreover, it is more natural to write $d$ instead of $N-1$, so that $q=2+4/d$.

\begin{proposition}\label{stref}
There are $\epsilon\in(0,1)$, $C>0$ and $\sigma\in (0,1)$ such that for any $\psi\in L^2(\R^d)$ with $\supp\hat\psi\subset\{|\xi|\leq\epsilon\}$,
\begin{equation}
\label{eq:stref}
\|e^{-itT(-\Delta)}\psi\|_{L^q_{t,x}} \leq C \left( \sup_{Q\in\mathcal D} |Q|^{-1/2} \|e^{-itT(-\Delta)}\psi_Q\|_{L^\infty_{t,x}} \right)^{1-\sigma} \|\psi\|_{L^2_x}^\sigma \,.
\end{equation}
\end{proposition}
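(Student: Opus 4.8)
The plan is to prove \eqref{eq:stref} by interpolating a bilinear restriction estimate for the ``perturbed paraboloid'' $\{(\xi,T(|\xi|^2)):|\xi|\le\epsilon\}$ against a trivial bound, in the spirit of Tao's proof of the inverse Strichartz inequality \cite{Tao-09}. The phase $T(E)=1-\sqrt{1-E}$ satisfies $T(0)=0$, $T'(0)=\tfrac12$, and on the small ball $\{|\xi|\le\epsilon\}$ the hypersurface $(\xi,T(|\xi|^2))$ is a smooth, compact, strictly convex perturbation of the paraboloid $(\xi,|\xi|^2/2)$; in particular, after choosing $\epsilon$ small it has everywhere non-vanishing Gaussian curvature and its normals vary through a set of small diameter, so there are no pairs of antipodal normals. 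Hence Tao's bilinear restriction theorem \cite{Tao-03} applies: if $f,g\in L^2$ are supported on two dyadic cubes $Q,Q'\subset\{|\xi|\le\epsilon\}$ that are \emph{separated} (comparable sidelength, distance comparable to that sidelength), then
$$
\|\,\widehat{f\,d\sigma}\ \widehat{g\,d\sigma}\,\|_{L^{(d+3)/(d+1)}_{t,x}(\R^{d+1})} \lesssim \|f\|_{L^2}\|g\|_{L^2},
$$
uniformly (the implied constant depends only on $d$, $\epsilon$, and the separation constant, by a compactness/scaling argument on the perturbed surface). Writing $u:=e^{-itT(-\Delta)}\psi$ and $u_Q:=e^{-itT(-\Delta)}\psi_Q$, this is exactly a bilinear $L^{(d+3)/(d+1)}$ bound for $u_Q\,u_{Q'}$.

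The second ingredient is a Whitney-type decomposition: since $|u|^2=\sum_{Q,Q'}u_Q\overline{u_{Q'}}$ and the diagonal part can be absorbed, one has the standard identity $u^2=\sum_{Q\sim Q'} u_Q u_{Q'}$ where the sum runs over separated pairs at all dyadic scales (the off-diagonal part of $\R^d\times\R^d$ is partitioned by such Whitney pairs). One then estimates $\|u\|_{L^q_{t,x}}^2=\|u^2\|_{L^{q/2}_{t,x}}$ by summing the bilinear pieces. Here $q/2=1+2/d=(d+2)/d$, which is strictly larger than the bilinear exponent $(d+3)/(d+1)$; this gap is what lets the geometric $\ell^1$-summation over Whitney pairs converge after one spends a small power. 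Concretely, I would interpolate: bound each $\|u_Q u_{Q'}\|_{L^{q/2}}$ by
$$
\|u_Q u_{Q'}\|_{L^{q/2}} \le \|u_Q u_{Q'}\|_{L^{(d+3)/(d+1)}}^{1-\sigma'}\,\|u_Q u_{Q'}\|_{L^\infty}^{\sigma'}
$$
for the appropriate $\sigma'\in(0,1)$, use the bilinear bound on the first factor, bound $\|u_Q u_{Q'}\|_\infty\le \|u_Q\|_\infty\|u_{Q'}\|_\infty$ and pull out $\sup_Q |Q|^{-1/2}\|u_Q\|_\infty$, and then the remaining $L^2$-factors $|Q|^{1/2}\|u_Q\|_2^{?}$ together with the curvature-induced decay in the bilinear constant (gain of a power of the ratio of sidelengths for well-separated cubes at different scales) give a summable series controlled by $\|\psi\|_{L^2}$. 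Collecting the $\sup_Q$ factor to the power $(1-\sigma)$ and $\|\psi\|_2$ to the power $\sigma$ yields \eqref{eq:stref}. I would use $L^2$ orthogonality ($\sum_Q\|u_Q\|_2^2\lesssim\|\psi\|_2^2$ by Plancherel, and a space-time orthogonality/Córdoba-type argument, or simply Plancherel in $x$ fibered over $t$) to make the summation clean.

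The main obstacle, and the step deserving the most care, is making the bilinear restriction estimate genuinely \emph{uniform} — uniform in the scale and position of the separated dyadic pair $Q,Q'$, and robust under replacing the paraboloid by its small perturbation $(\xi,T(|\xi|^2))$. One cannot simply rescale as for the exact paraboloid because the perturbed surface is not dilation-invariant; instead one must invoke the version of Tao's theorem for general smooth convex hypersurfaces with non-vanishing curvature and small normal variation, and track that the constant is stable. Equivalently, and perhaps more cleanly, one parabolically rescales a pair of separated caps to unit scale, observes that the rescaled surface converges (in $C^\infty$ on compact sets, with uniform curvature bounds) to a flat paraboloid as the scale shrinks, and runs a compactness argument over the (precompact) family of such rescaled surfaces to extract a single constant. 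Once this uniform bilinear bound is in hand, the rest is the now-standard Tao/Whitney bookkeeping, and the passage from \eqref{eq:stref} back to the refined Stein--Tomas inequality \eqref{eq:steintomref} in Proposition \ref{steintomref} is a change of variables identifying a cap of $\Sph^{N-1}$ near $\theta_\alpha$ with a piece of such a perturbed paraboloid, the dyadic cubes $Q$ on the sphere-side corresponding (via $\mathcal B$) to dyadic cubes in frequency on the parabolic side.
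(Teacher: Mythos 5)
Your plan matches the paper's: parabolically rescale Tao's bilinear estimate to the perturbed phase $T$, form the Whitney decomposition $\Psi^2=\sum_{Q\sim Q'}\Psi_Q\Psi_{Q'}$, interpolate the bilinear bound against $L^\infty$ to extract the factor $\sup_Q|Q|^{-1/2}\|\Psi_Q\|_{L^\infty}$, and close by summing. However, the summation step --- which you dismiss as ``now-standard bookkeeping'' and describe in ways that do not actually work --- is where the genuine content lies, and as written your argument has a gap there.

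Two specific problems. First, you invoke ``geometric $\ell^1$-summation over Whitney pairs after spending a small power'' and ``curvature-induced decay in the bilinear constant (gain of a power of the ratio of sidelengths for well-separated cubes at different scales).'' Neither mechanism exists here: a Whitney pair $Q\sim Q'$ has \emph{equal} sidelengths, so there is no ratio of sidelengths to gain from, and the bilinear constant $|Q|^{1-\frac{d+2}{pd}}$ carries a \emph{negative} exponent for $p<\frac{d+2}{d}$, so it blows up rather than decays as the scale shrinks. Naive $\ell^1$ summation over the infinitely many scales diverges. What actually replaces it is an almost-orthogonality lemma (Lemma~\ref{lem:triangle-ineq-improved} in the paper): the space-time Fourier supports of $\Psi_Q\Psi_{Q'}$ lie in parallelepipeds $R(Q+Q')$ with bounded overlap, which via the Tao--Vargas--Vega/Killip--Visan square-function argument gives
$\bigl\|\sum_{Q\sim Q'}\Psi_Q\Psi_{Q'}\bigr\|_{L^{q/2}}^{q^*}\lesssim\sum_{Q\sim Q'}\|\Psi_Q\Psi_{Q'}\|_{L^{q/2}}^{q^*}$
with $q^*=\min\{q/2,(q/2)'\}>1$. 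Checking that this overlap property survives the passage from the exact paraboloid to the phase $T$ (where $R(Q'')$ must be built from Taylor expansions of $T$ rather than the exact parabolic identity) is a real verification, not bookkeeping.

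Second, your appeal to ``Plancherel in $x$ fibered over $t$'' or a ``C\'ordoba-type argument'' for $L^2$-orthogonality is not the right closing step: after the orthogonality lemma and pulling out the $L^\infty$ factor, one is left with $\sum_{Q\sim Q'}\|\Psi_Q\Psi_{Q'}\|_{L^{q/2}}^r$ for some $r\in(1,q^*]$, which the paper controls by a \emph{second} interpolation of the bilinear estimate --- this time against the trivial pointwise bound $\|\Psi_Q\Psi_{Q'}\|_\infty\lesssim\|\hat\psi_Q\|_{L^1}\|\hat\psi_{Q'}\|_{L^1}$ --- to produce $|Q|^{1-2/s}\|\hat\psi_Q\|_{L^s}\|\hat\psi_{Q'}\|_{L^s}$ for some $s\in(1,2)$, followed by a separate lemma (Lemma~\ref{lem:uniform-bound-sum-tao}) bounding $\sum_Q\bigl[|Q|^{1-2/s}\|\hat\psi_Q\|_{L^s}^2\bigr]^r$ by $\|\psi\|_{L^2}^{2r}$. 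Note that $\sum_Q\|\psi_Q\|_{L^2}^{2r}$ is \emph{not} controlled by $\|\psi\|_{L^2}^{2r}$ via Plancherel alone, since the dyadic cubes overlap across scales; the $L^s$ formulation with $s<2$ is essential. By contrast, the point you flag as ``the main obstacle'' --- uniformity of the bilinear constant under rescaling --- is handled cleanly by the parabolic-rescaling compactness argument you already sketch, just as in the paper. So the plan is right, but the two summation lemmas are the crux, and your proposal does not supply them.
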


This should be viewed as a perturbed Strichartz inequality since $T(\xi^2)\sim \xi^2/2$ as $\xi\to 0$. The analogue of Proposition \ref{stref} with $T(-\Delta)$ replaced by $-\Delta/2$ is essentially due to \cite{Tao-09} and appears in a slightly stronger form in \cite{KilVis-book}. (In this case the restriction on the support of $\hat\psi$ is not necessary.) Proposition \ref{stref} follows in the same way, but for the sake of completeness we provide the details in the appendix. As in \cite{Tao-09,KilVis-book} the crucial ingredient is Tao's bilinear restriction estimate \cite{Tao-03}.

With the refinement of the perturbed Strichartz inequality, Proposition \ref{stref}, at hand it is easy to give the

\begin{proof}[Proof of Proposition \ref{steintomref}]
We fix $\epsilon>0$ as given by Proposition \ref{stref}. Let $f\in L^2(\Sph^{N-1})$ have support in the cap $\{\omega:\ \omega_N> \sqrt{1-\epsilon^2}\}$ and define a function $\psi\in L^2(\R^{N-1})$ by
$$
\hat\psi(\xi) := \frac{f(\xi,\sqrt{1-\xi^2})}{\sqrt{1-\xi^2}} \,,
$$
so that
$$
\check f(x) = (2\pi)^{-1/2} e^{ix_N} \left(e^{-ix_N T(-\Delta)}\psi \right)(x')
$$
Since $\supp\hat\psi\subset\{|\xi|>\epsilon\}$ we can apply Proposition \ref{stref} and obtain
$$
\|\check{f}\|_{L^q(\R^N)} \leq (2\pi)^{-1/2} C \left( \sup_{Q\in\mathcal D} |Q|^{-1/2} \|e^{-itT(-\Delta)}\psi_Q\|_{L^\infty_{t,x}} \right)^{1-\sigma} \|\hat\psi\|_{L^2_\xi}^\sigma \,.
$$
We bound
$$
\|\hat\psi\|_{L^2_\xi}^2 \leq \frac{1}{\sqrt{1-\epsilon^2}} \|f\|^2
$$
and note that
$$
(\1_{L(Q)} f)^\vee(x) = (2\pi)^{-1/2} e^{ix_N} \left(e^{-ix_N T(-\Delta)}\psi_Q \right)(x') \,.
$$
Thus we conclude that
$$
\|\check f\|_{L^q(\R^N)} \leq C' \left( \sup_{Q\in\mathcal D} |Q|^{-1/2} \|(\1_{L(Q)} f)^\vee \|_{L^\infty(\R^N)} \right)^{1-\sigma} \|f\|_{L^2(\Sph^{N-1})}^\sigma \,.
$$
By rotation invariance of the sphere we obtain for $f\in L^2(\Sph^{N-1})$ with $\supp f\subset C(\theta_\alpha)$ the same inequality with $L(Q)$ replaced by $L_{\theta_\alpha}(Q)$.

Thus, for an arbitrary function $f\in L^2(\Sph^{N-1})$ we obtain
\begin{align*}
\|\check f\|_{L^q(\R^N)} & = \left\| \sum_{\alpha=1}^A (\chi_\alpha f)^\vee \right\|_{L^q(\R^N)} \leq \sum_{\alpha=1}^A \left\| (\chi_\alpha f)^\vee \right\|_{L^q(\R^N)} \\
& \leq C' \sum_{\alpha=1}^A \left( \sup_{Q\in\mathcal D} |Q|^{-1/2} \|(\1_{L_{\theta_\alpha}(Q)} \chi_\alpha f)^\vee \|_{L^\infty(\R^N)} \right)^{1-\sigma} \|\chi_\alpha f\|_{L^2(\Sph^{N-1})}^\sigma \\
& \leq C' \left( \sup_\alpha \sup_{Q\in\mathcal D} |Q|^{-1/2} \|(\1_{L_{\theta_\alpha}(Q)} \chi_\alpha f)^\vee \|_{L^\infty(\R^N)} \right)^{1-\sigma} \sum_{\alpha=1}^A \|\chi_\alpha f\|_{L^2(\Sph^{N-1})}^\sigma \\
& \leq C' \left( \sup_\alpha \sup_{Q\in\mathcal D} |Q|^{-1/2} \|(\1_{L_{\theta_\alpha}(Q)} \chi_\alpha f)^\vee \|_{L^\infty(\R^N)} \right)^{1-\sigma} A^{1-\sigma/2} \left( \sum_{\alpha=1}^A \|\chi_\alpha f\|_{L^2(\Sph^{N-1})}^2\right)^{\sigma/2} \\
& \leq C' \left( \sup_\alpha \sup_{Q\in\mathcal D} |Q|^{-1/2} \|(\1_{L_{\theta_\alpha}(Q)} \chi_\alpha f)^\vee \|_{L^\infty(\R^N)} \right)^{1-\sigma} A^{1-\sigma/2} \|f\|_{L^2(\Sph^{N-1})}^\sigma \,.
\end{align*}
This is the claimed inequality.
\end{proof}


\section{Equal profiles}\label{sec:equal}

Our goal in this section is to prove \eqref{eq:step3}, that is, we want to express the solution of the minimization problem $\tilde{\mathcal S}_d$ in terms of the solution of the minimization problem $\mathcal S_d$. This will follow from a general inequality that we describe next.

For $f,g\in L^q(\R^N)$ (in this section $q$ can be any number $\geq 2$) let
$$
\Phi_q(f,g) := \lim_{\lambda\to\infty} \int_{\R^N} \left| f(x) + e^{i\lambda x_N} g(x)\right|^q \,dx \,.
$$
It is easy to see that this limit exists and is given by
$$
\Phi_q(f,g) = \frac{1}{2\pi} \int_{\R^N} \int_{-\pi}^\pi \left| f(x) + e^{i\theta} g(x)\right|^q \,d\theta\,dx \,.
$$
A simple proof of this fact can be found, for instance, in \cite[Lem.~5.2]{Allaire-92}. Note that for fixed $x\in\R^N$, the function
$$
\theta\mapsto \left| f(x) + e^{i\theta} g(x)\right|^q = \left( |f(x)|^2 + |g(x)|^2 + 2 \re e^{i\theta} \overline{f(x)}g(x) \right)^{q/2}
$$
is continuous and has maximum $(|f(x)|+ |g(x)|)^q$. This maximum belongs to $L^1(\R^N)$ as function of $x$. Therefore, Allaire's result applies in the above setting.

\begin{lemma}\label{equal}
If $q\geq 2$, then
$$
\Phi_q(f,g) \leq \frac{2^{q/2}}{\sqrt\pi} \frac{\Gamma(\frac{q+1}{2})}{\Gamma(\frac{q+2}{2})} \left( \|f\|_q^2 + \|g\|_q^2 \right)^{q/2} \,.
$$
\end{lemma}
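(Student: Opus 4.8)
The plan is to reduce the stated inequality, via the formula for $\Phi_q$ recorded above, to a one-dimensional estimate. First, absorbing the phase of $\overline{f(x)}\,g(x)$ into the angular variable and using Fubini (everything is nonnegative), one rewrites
\[
\Phi_q(f,g)=\int_{\R^N} G_q\bigl(|f(x)|,|g(x)|\bigr)\,dx,
\qquad
G_q(a,b):=\frac{1}{2\pi}\int_{-\pi}^{\pi}\bigl(a^2+b^2+2ab\cos\theta\bigr)^{q/2}\,d\theta .
\]
Thus the lemma is equivalent to the assertion that $\int_{\R^N}G_q(|f|,|g|)\,dx\le C_q\,(\|f\|_q^2+\|g\|_q^2)^{q/2}$, where $C_q:=\tfrac{2^{q/2}}{\sqrt\pi}\,\Gamma(\tfrac{q+1}{2})/\Gamma(\tfrac{q+2}{2})$.

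The core of the argument is the pointwise bound $G_q(a,b)\le C_q\,(a^2+b^2)^{q/2}$ for all $a,b\ge 0$. Granting this, the conclusion is immediate: integrating in $x$ gives $\Phi_q(f,g)\le C_q\,\bigl\||f|^2+|g|^2\bigr\|_{L^{q/2}(\R^N)}^{q/2}$, and since $q/2\ge 1$ the triangle inequality in $L^{q/2}$ yields $\bigl\||f|^2+|g|^2\bigr\|_{L^{q/2}}\le \bigl\||f|^2\bigr\|_{L^{q/2}}+\bigl\||g|^2\bigr\|_{L^{q/2}}=\|f\|_q^2+\|g\|_q^2$.

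To prove the pointwise bound I would exploit that $G_q$ is homogeneous of degree $q$, reducing to the case $a^2+b^2=1$; then $s:=2ab$ ranges over $[0,1]$ (since $2ab\le a^2+b^2$) and $G_q(a,b)=\tfrac{1}{2\pi}I(s)$ with $I(s):=\int_{-\pi}^{\pi}(1+s\cos\theta)^{q/2}\,d\theta$. The key point is that $I$ is nondecreasing on $[0,1]$: differentiating under the integral, using the symmetry $\theta\mapsto-\theta$ and the substitution $\theta\mapsto\pi-\theta$ on $[\pi/2,\pi]$, one obtains
\[
I'(s)=q\int_0^{\pi/2}\cos u\,\Bigl[(1+s\cos u)^{q/2-1}-(1-s\cos u)^{q/2-1}\Bigr]\,du\ \ge\ 0 ,
\]
the sign being forced by $\cos u\ge 0$ on $[0,\pi/2]$, by $1\pm s\cos u\ge 0$, and --- crucially --- by $q/2-1\ge 0$, which is exactly where the hypothesis $q\ge 2$ is used. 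Hence $G_q(a,b)\le\tfrac{1}{2\pi}I(1)=\tfrac{1}{2\pi}\int_{-\pi}^{\pi}(1+\cos\theta)^{q/2}\,d\theta$; the half-angle identity $1+\cos\theta=2\cos^2(\theta/2)$ and the Beta integral $\int_0^{\pi/2}\cos^q u\,du=\tfrac{\sqrt\pi}{2}\,\Gamma(\tfrac{q+1}{2})/\Gamma(\tfrac{q+2}{2})$ identify this last quantity as precisely $C_q$, completing the proof of the pointwise bound.

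The only nonroutine step is the monotonicity of $I$; everything else is elementary bookkeeping. It is worth noting that this argument also pins down the equality cases: equality in the pointwise bound forces $s=1$, i.e.\ $a=b$, so equality in the lemma holds exactly when $|f|=|g|$ almost everywhere (in particular when $f=g$), which is the ``equal profiles'' phenomenon after which the section is named.
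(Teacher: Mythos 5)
Your proof is correct and is essentially the paper's proof, recast as a pointwise bound $G_q(a,b)\le C_q\,(a^2+b^2)^{q/2}$ rather than in terms of the paper's function $\phi$ of $\alpha(x)=2\overline{f}g/(|f|^2+|g|^2)$; the two are literally the same once one notes $G_q(a,b)/(a^2+b^2)^{q/2}=\phi\!\left(2ab/(a^2+b^2)\right)$, and the monotonicity argument, the $L^{q/2}$ triangle inequality, and the Beta-integral evaluation all match. One small correction to your closing remark: the monotonicity of $I$ is strict only for $q>2$ (when $q=2$ one has $I(s)\equiv 2\pi$ and $C_2=1$, so the pointwise bound is an identity), so the equality characterization $|f|=|g|$ a.e.\ is valid only for $q>2$, as the paper is careful to state.
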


The importance of the constant on the right side is that we get equality if $|f|=|g|$. In fact, the proof below shows that if $q>2$, then the inequality is strict unless $|f|=|g|$ almost everywhere.

\begin{proof}
Let us write the above formula for $\Phi_q(f,g)$ as
$$
\Phi_q(f,g) = \int_{\R^N} \left( |f(x)|^2 + |g(x)|^2 \right)^{q/2} \phi(|\alpha(x)|) \,dx
$$
with
$$
\alpha(x) := \frac{2 \overline{f(x)} g(x)}{|f(x)|^2 + |g(x)|^2} \,.
$$
and, for $t\in [0,1]$,
$$
\phi(t) = \frac{1}{\pi} \int_0^\pi \left( 1 + t \cos\theta \right)^{q/2} \,d\theta \,.
$$
We claim that $\phi$ is increasing in $[0,1]$. In fact,
\begin{align*}
\phi'(t) & = \frac{q}{2} \frac{1}{\pi} \int_0^\pi \left( 1 + t \cos\theta \right)^{(q-2)/2}\cos\theta \,d\theta \\
& = \frac{q}{2} \frac{1}{\pi} \int_0^{\pi/2} \left( \left( 1 + t \cos\theta \right)^{(q-2)/2} - \left( 1 - t \cos\theta \right)^{(q-2)/2} \right) \cos\theta \,d\theta \,.
\end{align*}
For $q\geq 2$, the integrand on the right side is pointwise non-negative, which proves the monotonicity.

Since $|\alpha(x)|\leq 1$, we deduce that
$$
\Phi_q(f,g) \leq \phi(1) \int_{\R^N} \left( |f(x)|^2 + |g(x)|^2 \right)^{q/2} \,dx
$$
and therefore, by the triangle inequality in $L^{q/2}$,
\begin{align*}
\Phi_q(f,g)^{2/q} & \leq \phi(1)^{2/q} \left\| |f|^2 + |g|^2 \right\|_{q/2} \leq \phi(1)^{2/q} \left( \left\||f|^2\right\|_{q/2} + \left\||g|^2 \right\|_{q/2} \right) \\
& = \phi(1)^{2/q} \left( \left\|f\right\|^2_q + \left\|g \right\|_q^2 \right).
\end{align*}
Thus, to complete the proof of the lemma it remains to compute the value of $\phi(1)$. Using the integral representation of the beta function, we find
\begin{align}
\label{eq:gammafcn}
\phi(1) & = \frac1\pi \int_0^\pi \left( 1 + \cos\theta\right)^{q/2}\,d\theta = \frac{2^{q/2}}{\pi} \int_0^\pi \cos^q(\theta/2)\,d\theta = \frac{2^{(q+2)/2}}{\pi} \int_0^{\pi/2} \cos^q\theta\,d\theta \notag\\
& = \frac{2^{q/2}}{\pi} B(\frac12,\frac{q+1}2) = \frac{2^{q/2}}{\sqrt\pi} \frac{\Gamma(\frac{q+1}{2})}{\Gamma(\frac{q+2}{2})} \,.
\end{align}
This completes the proof.
\end{proof}

\begin{corollary}\label{step3}
$\tilde{\mathcal S}_d = \frac{2^{q/2}}{\sqrt\pi} \frac{\Gamma(\frac{q+1}{2})}{\Gamma(\frac{q+2}{2})}\ \mathcal S_d$ with $q=2+4/d$.
\end{corollary}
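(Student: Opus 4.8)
The plan is to establish the two inequalities $\tilde{\mathcal S}_d\le c_q\,\mathcal S_d$ and $\tilde{\mathcal S}_d\ge c_q\,\mathcal S_d$ separately, where $c_q:=\frac{2^{q/2}}{\sqrt\pi}\frac{\Gamma(\frac{q+1}{2})}{\Gamma(\frac{q+2}{2})}$; by \eqref{eq:gammafcn} this is exactly $\phi(1)$. The starting point is the observation that, given $\psi^+,\psi^-\in L^2(\R^d)$, the functions $f(t,x):=(e^{it\Delta/2}\psi^+)(x)$ and $g(t,x):=(e^{-it\Delta/2}\psi^-)(x)$ belong to $L^q(\R\times\R^d)$ by the Strichartz inequality, so $(|f|+|g|)^q\in L^1$ and hence the numerator in the definition of $\tilde{\mathcal S}_d$ equals $\Phi_q(f,g)$ in the sense of Section \ref{sec:equal} (the relevant limit exists and is given by the averaging formula, cf.\ the discussion before Lemma \ref{equal}). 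Thus Lemma \ref{equal} applies directly; recall also $(d+2)/d=q/2$.

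For the upper bound I would apply Lemma \ref{equal} to get $\Phi_q(f,g)\le c_q(\|f\|_q^2+\|g\|_q^2)^{q/2}$ and then estimate the two $L^q$ norms individually. The change of variables $t\mapsto-t$ gives $\|g\|_q=\|e^{it\Delta/2}\psi^-\|_{L^q(\R\times\R^d)}$, so the definition of $\mathcal S_d$ yields $\|f\|_q^q\le(2\pi)^{q/2}\mathcal S_d\,\|\psi^+\|^q$ and $\|g\|_q^q\le(2\pi)^{q/2}\mathcal S_d\,\|\psi^-\|^q$, and therefore $(\|f\|_q^2+\|g\|_q^2)^{q/2}\le(2\pi)^{q/2}\mathcal S_d(\|\psi^+\|^2+\|\psi^-\|^2)^{q/2}$. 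Plugging this back in, dividing by $(\|\psi^+\|^2+\|\psi^-\|^2)^{q/2}$, and multiplying by the factor $(2\pi)^{-q/2}$ from the definition of $\tilde{\mathcal S}_d$, the resulting quotient is $\le c_q\,\mathcal S_d$ for every admissible pair, so $\tilde{\mathcal S}_d\le c_q\,\mathcal S_d$.

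For the lower bound I would use the equality case of Lemma \ref{equal}, noted in the remark following it: equality holds when $|f|=|g|$. This is achieved by choosing $\psi^-=\overline{\psi^+}$, since then $g=e^{-it\Delta/2}\overline{\psi^+}=\overline{e^{it\Delta/2}\psi^+}=\overline f$, so $|f|=|g|$ and $\Phi_q(f,\overline f)=c_q(2\|f\|_q^2)^{q/2}=c_q\,2^{q/2}\|f\|_q^q$. Specializing the supremum defining $\tilde{\mathcal S}_d$ to pairs $(\psi,\overline\psi)$ with $\psi\in L^2(\R^d)$, for which $\|\psi^+\|^2+\|\psi^-\|^2=2\|\psi\|^2$, the corresponding quotient equals
$$
(2\pi)^{-q/2}\,\frac{c_q\,2^{q/2}\,\|e^{it\Delta/2}\psi\|_{L^q(\R\times\R^d)}^q}{(2\|\psi\|^2)^{q/2}} = c_q\,(2\pi)^{-q/2}\,\frac{\|e^{it\Delta/2}\psi\|_{L^q(\R\times\R^d)}^q}{\|\psi\|^q} \,,
$$
and taking the supremum over $\psi$ recovers exactly $c_q\,\mathcal S_d$. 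Hence $\tilde{\mathcal S}_d\ge c_q\,\mathcal S_d$, and combined with the upper bound this proves the corollary.

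The computation is essentially bookkeeping of the constants $(2\pi)^{\pm q/2}$, $2^{q/2}$ and the $L^2$-norm factors. The only point that needs a little care is the lower bound, where one must verify that the test pair $(\psi,\overline\psi)$ genuinely realizes the equality case of Lemma \ref{equal}; this reduces to the conjugation/time-reversal identity $e^{-it\Delta/2}\overline\psi=\overline{e^{it\Delta/2}\psi}$, so there is no essential obstacle.
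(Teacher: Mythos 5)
Your argument is correct and matches the paper's proof essentially step for step: the upper bound applies Lemma \ref{equal} to $f=e^{it\Delta/2}\psi^+$, $g=e^{-it\Delta/2}\psi^-$ followed by two uses of the Strichartz inequality, and the lower bound specializes to $\psi^-=\overline{\psi^+}$ and invokes the equality case $|f|=|g|$ of Lemma \ref{equal} via the identity $e^{-it\Delta/2}\overline{\psi}=\overline{e^{it\Delta/2}\psi}$. The only difference is that you spell out the bookkeeping of the $(2\pi)^{\pm q/2}$ and $2^{q/2}$ factors that the paper leaves implicit.
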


\begin{proof}
Let $\psi^+,\psi^-\in L^2(\R^d)$. By the lemma (with $N=d+1$) and the Strichartz inequality,
\begin{align*}
& \lim_{\lambda\to\infty} \iint_{\R\times\R^d} \left| e^{it\Delta/2}\psi^+(x) + e^{i\lambda x_N}e^{-it\Delta/2}\psi^-(x)\right|^q\,dx\,dt \\
& \qquad \leq \frac{2^{q/2}}{\sqrt\pi} \frac{\Gamma(\frac{q+1}{2})}{\Gamma(\frac{q+2}{2})} \left( \|e^{it\Delta/2}\psi^+\|_q^2 + \|e^{-it\Delta/2}\psi^-\|_q^2 \right)^{q/2} \\
& \qquad \leq \frac{2^{q/2}}{\sqrt\pi} \frac{\Gamma(\frac{q+1}{2})}{\Gamma(\frac{q+2}{2})} (2\pi)^{(d+2)/d} \mathcal S_d \left( \|\psi^+\|^2 + \|\psi^-\|^2 \right)^{q/2} \,.
\end{align*}
This proves the inequality $\leq$ in the corollary. The opposite inequality follows by choosing $\psi^+=\overline{\psi^-}$ to be almost maximizers for $\mathcal S_d$ and recalling that equality holds in Lemma \ref{equal} if $f=\overline g$.
\end{proof}


\section{Perturbative analysis}\label{sec:pert}

In this section we prove Proposition \ref{pert} which verifies the main assumption of Theorem \ref{main} provided Conjecture \ref{conj} holds. Let
$$
\psi_G(x):=e^{-x^2/2}
$$
and
$$
\mathcal S_d^G :=(2\pi)^{-(d+2)/d} \,\frac{\iint_{\R\times\R^d}|e^{it\Delta/2}\psi_G(x)|^{2+4/d}\,dx\,dt}{\|\psi_G\|^{2+4/d}} \,,
$$
so that Conjecture \ref{conj} is equivalent to the identity $\mathcal S_d = \mathcal S_d^G$. In view of this identity, Proposition \ref{pert} is an immediate consequence of Proposition \ref{prop:expansion-gaussian} below.

As explained in Remark \ref{gapnotstrict}, the \emph{non-strict} analogue of inequality \eqref{eq:gap} is obtained by glueing two Gaussians on the sphere that concentrate on two antipodal points. We now compute the next order of the `energy' of this trial function. Thus, for any $\epsilon>0$, consider the trial function
$$
f_\epsilon(\omega):=\chi(\omega_N)e^{-\frac{1-\omega_N}{\epsilon^2}}+\chi(-\omega_N)e^{-\frac{1+\omega_N}{\epsilon^2}} \quad\forall\omega\in\Sph^{N-1},
$$
where $\chi\in C_c^\ii(\R)$ is such that $\chi\equiv1$ in a neighborhood of $1$ and $\chi\equiv0$ in a neighborhood of $(-\infty,0]$. As $\epsilon\to0$, the functions $(f_\epsilon)$ concentrate on the north and south pole and the limiting profiles are, indeed, Gaussians.

\begin{proposition}\label{prop:expansion-gaussian}
We have
 \begin{equation}
  \log\frac{\int_{\R^N}|\check{f_\epsilon}|^q\,dx}{\|f_\epsilon\|^q}=\log\left( \frac{2^{q/2}}{\sqrt{\pi}}\frac{\Gamma\left(\frac{q+1}{2}\right)}{\Gamma\left(\frac{q+2}{2}\right)}\,\cS_{d}^G \right) +\frac{1}{4}\epsilon^2+o_{\epsilon\to0}(\epsilon^2) \,.
 \end{equation}
In particular, for all sufficiently small $\epsilon>0$,
$$
\frac{\int_{\R^N}|\check{f_\epsilon}|^q\,dx}{\|f_\epsilon\|^q}
> \frac{2^{q/2}}{\sqrt{\pi}}\frac{\Gamma\left(\frac{q+1}{2}\right)}{\Gamma\left(\frac{q+2}{2}\right)}\,\cS_{d}^G \,.
$$ 
\end{proposition}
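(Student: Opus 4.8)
\emph{Reduction and rescaled profiles.} The plan is to expand numerator and denominator to relative order $\epsilon^2$ as $\epsilon\to0$, reducing to a single concentration bump via the antipodal symmetry, and then to read off the $\epsilon^2$--coefficient. Write $f_\epsilon=b_\epsilon+\tilde b_\epsilon$ with $b_\epsilon(\omega):=\chi(\omega_N)e^{-(1-\omega_N)/\epsilon^2}$ and $\tilde b_\epsilon(\omega):=b_\epsilon(-\omega)$. Since $\supp\chi\subset(0,\infty)$, the functions $\omega\mapsto\chi(\omega_N)$ and $\omega\mapsto\chi(-\omega_N)$ have disjoint supports, so $\|f_\epsilon\|^2=2\|b_\epsilon\|^2$ exactly; and as $b_\epsilon$ is real, $(\tilde b_\epsilon)^\vee=\overline{\check b_\epsilon}$, hence $\check f_\epsilon=2\re\check b_\epsilon$. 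Parametrize $\supp b_\epsilon$ by $\omega'\in\R^{N-1}$, $\omega_N=\sqrt{1-|\omega'|^2}$, substitute $\omega'=\epsilon\xi$, and in $\check b_\epsilon$ rescale $x=(y'/\epsilon,y_N/\epsilon^2)$; the region $|\omega'|\gtrsim1$ contributes $O(e^{-c/\epsilon^2})$ and on the rest $\chi(\omega_N)\equiv1$ for small $\epsilon$. Taylor expanding $1-\omega_N=\tfrac{\epsilon^2|\xi|^2}{2}+\tfrac{\epsilon^4|\xi|^4}{8}+\cdots$, the measure density $(1-|\omega'|^2)^{-1/2}=1+\tfrac{\epsilon^2|\xi|^2}{2}+\cdots$, and the phase $\tfrac{y_N}{\epsilon^2}\sqrt{1-\epsilon^2|\xi|^2}=\tfrac{y_N}{\epsilon^2}-\tfrac{y_N|\xi|^2}{2}-\tfrac{\epsilon^2y_N|\xi|^4}{8}+\cdots$, the $\xi$--integral gives, uniformly on compact sets and with dominated $o(\epsilon^2)$--errors,
\[
\epsilon^{-(N-1)}\check b_\epsilon(y'/\epsilon,y_N/\epsilon^2)=e^{iy_N/\epsilon^2}\bigl(u_0(y)+\epsilon^2u_2(y)+o(\epsilon^2)\bigr),
\]
where $u_0(y)=(2\pi)^{-1/2}(e^{iy_N\Delta/2}\psi_G)(y')=(2\pi)^{-1/2}(1+iy_N)^{-(N-1)/2}e^{-|y'|^2/(2(1+iy_N))}$ and $u_2=-\tfrac12\Delta_{y'}u_0-\tfrac{1+iy_N}{8}\Delta_{y'}^2u_0$ (equivalently $u_2=i\,\partial_{y_N}u_0+\tfrac{1+iy_N}{2}\partial_{y_N}^2u_0$, using $\partial_{y_N}u_0=\tfrac i2\Delta_{y'}u_0$). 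The same expansion of $\|b_\epsilon\|^2$ gives $\|f_\epsilon\|^2=2\epsilon^{N-1}(A_0+\epsilon^2A_2+o(\epsilon^2))$ with $A_0=\int_{\R^{N-1}}e^{-|\xi|^2}\,d\xi=\|\psi_G\|^2$ and $A_2=\int_{\R^{N-1}}e^{-|\xi|^2}(\tfrac{|\xi|^2}{2}-\tfrac{|\xi|^4}{4})\,d\xi$; since $(N-1)q/2=N+1$ this yields $\|f_\epsilon\|^q=2^{q/2}\epsilon^{N+1}A_0^{q/2}(1+\tfrac{qA_2}{2A_0}\epsilon^2+o(\epsilon^2))$.

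\emph{Averaging the rapid phase.} Using $q(N-1)-(N+1)=N+1$, the change of variables above gives
\[
\int_{\R^N}|\check f_\epsilon|^q\,dx=2^q\epsilon^{N+1}\int_{\R^N}\bigl|\re\bigl(e^{iy_N/\epsilon^2}W_\epsilon(y)\bigr)\bigr|^q\,dy,\qquad W_\epsilon:=u_0+\epsilon^2u_2+o(\epsilon^2).
\]
Write $W_\epsilon=|W_\epsilon|e^{i\beta_\epsilon}$ and Fourier--expand $|\cos\theta|^q=\sum_ma_me^{2im\theta}$. The $m=0$ term produces the mean $\tfrac1\pi\int_0^\pi|\cos\theta|^q\,d\theta=2^{-q/2}\phi(1)$ with $\phi(1)=\tfrac{2^{q/2}}{\sqrt\pi}\tfrac{\Gamma((q+1)/2)}{\Gamma((q+2)/2)}$ as in \eqref{eq:gammafcn}; for $m\neq0$ one is left with an oscillatory integral in $y_N$ with phase $e^{2imy_N/\epsilon^2}$ whose amplitude $|W_\epsilon|^qe^{2im\beta_\epsilon}=(|W_\epsilon|^2)^{(q-2m)/2}W_\epsilon^{2m}$ is $C^2$ (here one uses that $u_0$, an explicit complex Gaussian, never vanishes, so that $W_\epsilon$ vanishes at worst on a set of measure $O(e^{-c/\epsilon})$, where $q\geq2$ makes the amplitude $C^2$ in any case) and, with its first two $y_N$--derivatives, integrable by the dispersive decay of $u_0$; two integrations by parts and summation over $m$ (convergent since $q>2$) bound this by $O(\epsilon^4)$. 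Combined with $\|W_\epsilon\|_q^q=\|u_0\|_q^q+\epsilon^2q\re\!\int_{\R^N}|u_0|^{q-2}\overline{u_0}\,u_2\,dy+o(\epsilon^2)$, we get
\[
\int_{\R^N}|\check f_\epsilon|^q\,dx=2^{q/2}\phi(1)\,\epsilon^{N+1}\Bigl(\|u_0\|_q^q+\epsilon^2q\re\!\int_{\R^N}|u_0|^{q-2}\overline{u_0}\,u_2\,dy+o(\epsilon^2)\Bigr).
\]

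\emph{Conclusion.} Dividing by $\|f_\epsilon\|^q$, the factors $\epsilon^{N+1}$ and $2^{q/2}$ cancel and the leading constant is $\phi(1)\|u_0\|_q^q/A_0^{q/2}$. A short computation gives $\|u_0\|_q^q/A_0^{q/2}=\cS_d^G$ ($d=N-1$): indeed $\|u_0\|_q^q=(2\pi)^{-q/2}\iint_{\R\times\R^{N-1}}|e^{it\Delta/2}\psi_G|^q\,dx\,dt$, $A_0^{q/2}=\|\psi_G\|^q$, and $(2\pi)^{-q/2}(2\pi)^{(d+2)/d}=1$ because $q/2=(d+2)/d$. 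Hence
\[
\log\frac{\int_{\R^N}|\check f_\epsilon|^q\,dx}{\|f_\epsilon\|^q}=\log\!\Bigl(\tfrac{2^{q/2}}{\sqrt\pi}\tfrac{\Gamma((q+1)/2)}{\Gamma((q+2)/2)}\,\cS_d^G\Bigr)+\Bigl(q\,\frac{\re\int|u_0|^{q-2}\overline{u_0}u_2\,dy}{\|u_0\|_q^q}-\frac{qA_2}{2A_0}\Bigr)\epsilon^2+o(\epsilon^2).
\]
It then remains to evaluate the $\epsilon^2$--coefficient. Substituting the explicit Gaussian $u_0$ (and $u_2$ as a differential operator applied to it), every $y'$--integral is Gaussian and every $y_N$--integral rational, and a direct, if somewhat lengthy, computation shows that all $N$--dependence cancels and the coefficient equals $\tfrac14$. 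The ``in particular'' statement follows since $\exp(\tfrac14\epsilon^2+o(\epsilon^2))>1$ for all sufficiently small $\epsilon>0$.

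\emph{Main obstacle.} The decisive step is the last one: collecting all three second--order contributions --- from the Gaussian exponent, from the surface measure, and from the curvature of the phase --- without error, and carrying out the ensuing Gaussian and rational integrals to verify that the answer is exactly $\tfrac14$, independent of the dimension. The analytic points (negligibility of the far region, uniform domination for the $o(\epsilon^2)$ remainders, and the $O(\epsilon^4)$ control of the phase averaging via the non--vanishing of the Gaussian profile) are routine but need to be stated carefully.
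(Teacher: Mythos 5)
Your outline follows essentially the same route as the paper's proof: split $f_\epsilon$ into the north and south bumps, rescale and Taylor-expand the oscillatory Fourier integral to get a profile $u_0+\epsilon^2u_2+o(\epsilon^2)$ (the paper's $\phi_\epsilon$), average out the rapid phase $e^{iy_N/\epsilon^2}$ by Fourier-expanding in the angle to isolate the zero mode (producing the $\Gamma$-ratio) with an $O(\epsilon^4)$ error from the non-zero modes via two integrations by parts, and reduce the $\epsilon^2$-coefficient to Gaussian and rational integrals. The paper structures this as a dedicated single-bump lemma (Lemma~\ref{singlebump}) combined with the averaging step (equation~\eqref{eq:singlebump2}); you do both in one sweep. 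Your identification of $u_2=-\tfrac12\Delta_{y'}u_0-\tfrac{1+iy_N}{8}\Delta_{y'}^2u_0$, the $\epsilon$-power bookkeeping, and the reduction $\|u_0\|_q^q/A_0^{q/2}=\cS_d^G$ are all correct.

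There is, however, a genuine gap, which you flag yourself: you do not actually carry out the final computation showing that the $\epsilon^2$-coefficient equals $\tfrac14$. That computation is the substance of the proposition --- it is where the three second-order contributions (from the Gaussian exponent, from the surface measure density, and from the curvature of the phase) must cancel against the $-\tfrac{q}{2}\tfrac{A_2}{A_0}$ term in a dimension-independent way --- and it is exactly what the paper executes in the proof of Lemma~\ref{singlebump}, arriving at $\tfrac12-\tfrac{d^2}{16}-\tfrac{q}{2}\cdot\tfrac{d(2-d)}{16}=\tfrac14$. Asserting the answer does not discharge the proof. A smaller point of rigor: in the phase-averaging step you write the $m\neq0$ amplitude as $(|W_\epsilon|^2)^{(q-2m)/2}W_\epsilon^{2m}$ and then need it $C^2$ in $y_N$. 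For $m\ge2$ and $N\ge4$ one has $q-2m<0$, so this factorization is singular wherever $W_\epsilon$ vanishes; your parenthetical hedge about $u_0$ being nowhere zero doesn't fully patch this, since $W_\epsilon=u_0+\epsilon^2u_2+o(\epsilon^2)$ rather than $u_0$. The paper avoids the issue by bounding $\|\partial_\theta^2 a(x,\cdot)\|_{L^\infty_\theta}\le C|\phi_\epsilon(x)|^q$ pointwise (equation~\eqref{eq:homoproof}) and then estimating the Fourier coefficients $c_n(x)$ directly, which is cleaner and does not require a factorization with possibly negative powers of $|W_\epsilon|$.
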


An ingredient in the proof of this proposition is the following result about the simpler trial function
\begin{equation}
\label{eq:trialsingle}
g_\epsilon(x) := \chi(\omega_N)e^{-\frac{1-\omega_N}{\epsilon^2}} \quad\forall\omega\in\Sph^{N-1} \,,
\end{equation}
which concentrates only at the north pole. Similar results appear in \cite{ChrSha-12a,Shao-15} for $N=2,3$.

\begin{lemma}\label{singlebump}
We have
 \begin{equation}
  \log\frac{\int_{\R^N}|\check{g_\epsilon}|^q\,dx}{\|g_\epsilon\|^q}=\log \cS_{d}^G + \frac{1}{4}\epsilon^2+o_{\epsilon\to0}(\epsilon^2) \,.
 \end{equation}
\end{lemma}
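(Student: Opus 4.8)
The plan is to perform the change of variables that simultaneously turns $g_\epsilon$ into a perturbation of the Gaussian $\psi_G$ and the sphere into a perturbation of the paraboloid, and then to Taylor-expand every quantity appearing in the quotient to order $\epsilon^2$. First, since $\chi$ vanishes near $(-\infty,0]$, the function $g_\epsilon$ is supported in the open upper hemisphere, which we parametrise by $\omega=(\omega',\sqrt{1-|\omega'|^2})$, $\omega'\in\R^{N-1}$; write $d=N-1$. Using $1-\omega_N=1-\sqrt{1-|\omega'|^2}=T(|\omega'|^2)$ and $d\omega=d\omega'/\sqrt{1-|\omega'|^2}$, express $\check g_\epsilon$ and $\|g_\epsilon\|^2$ as integrals over $\{|\omega'|<1\}$ and rescale $\omega'=\epsilon\xi$, $x'=y/\epsilon$, $x_N=\tau/\epsilon^2$. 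With $T(\epsilon^2|\xi|^2)/\epsilon^2=|\xi|^2 m(\epsilon^2|\xi|^2)$, where $m(u)=(1+\sqrt{1-u})^{-1}$ (so $m(0)=\frac12$, $m'(0)=\frac18$, and $m\ge\frac12$ on $[0,1]$), one arrives at
\begin{equation*}
\frac{\int_{\R^N}|\check g_\epsilon|^q\,dx}{\|g_\epsilon\|^q}=(2\pi)^{-Nq/2}\,\frac{\iint_{\R\times\R^{d}}|F_\epsilon(y,\tau)|^q\,dy\,d\tau}{\bigl(\int_{\R^{d}}H_\epsilon(\xi)\,d\xi\bigr)^{q/2}},
\end{equation*}
\begin{equation*}
F_\epsilon(y,\tau)=\int_{|\xi|<1/\epsilon} e^{iy\cdot\xi-i\tau|\xi|^2 m(\epsilon^2|\xi|^2)}\,\frac{\chi(\sqrt{1-\epsilon^2|\xi|^2})}{\sqrt{1-\epsilon^2|\xi|^2}}\,e^{-|\xi|^2 m(\epsilon^2|\xi|^2)}\,d\xi,
\end{equation*}
and $H_\epsilon(\xi)=\chi(\sqrt{1-\epsilon^2|\xi|^2})^2(1-\epsilon^2|\xi|^2)^{-1/2}e^{-2|\xi|^2 m(\epsilon^2|\xi|^2)}$. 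All the $\epsilon$-dependence enters through $\epsilon^2$, and the powers of $\epsilon$ cancel because $dq/2=d+2$ — the identity that makes the Strichartz functional scale-invariant. At $\epsilon=0$ one has $m\equiv\frac12$, $F_0=(2\pi)^{d/2}e^{i\tau\Delta/2}\psi_G$, and $H_0=e^{-|\xi|^2}$ with $\int H_0=\|\psi_G\|^2$, so the quotient equals $(2\pi)^{-q/2}\iint|e^{i\tau\Delta/2}\psi_G|^q\,dy\,d\tau/\|\psi_G\|^q=\cS_d^G$ (using $q/2=(d+2)/d$); this yields the term $\log\cS_d^G$.

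For the $\epsilon^2$-term I would differentiate $\nu\mapsto\log\bigl(\iint|F_{\sqrt\nu}|^q\bigr)-\frac q2\log\bigl(\int H_{\sqrt\nu}\bigr)$ at $\nu=0$. The $H$-part is elementary: $\partial_\nu H_\epsilon|_{\nu=0}=(\frac12|\xi|^2-\frac14|\xi|^4)e^{-|\xi|^2}$, and evaluating Gaussian moments gives $-\frac q2\,\partial_\nu\log\int H_\epsilon\big|_{0}=\frac{d^2-4}{16}$. For the $F$-part, $m'(0)=\frac18$ gives
\begin{equation*}
\partial_\nu F_{\sqrt\nu}\big|_{0}=\int e^{iy\cdot\xi-i\tau|\xi|^2/2}\Bigl[\bigl(\tfrac12|\xi|^2-\tfrac18|\xi|^4\bigr)-\tfrac{i\tau}{8}|\xi|^4\Bigr]e^{-|\xi|^2/2}\,d\xi,
\end{equation*}
and the closed form $e^{i\tau\Delta/2}\psi_G(y)=(1+i\tau)^{-d/2}e^{-|y|^2/(2(1+i\tau))}$ lets one rewrite each polynomial-weighted integral through $F_0$ (apply $-\Delta_y$, resp.\ $\partial_s$ at $s=1$ to $e^{-s|\xi|^2/2}$), giving
\begin{equation*}
\partial_\nu F_{\sqrt\nu}\big|_{0}=F_0\Bigl(\frac{d(2-d)}{8(1+i\tau)}+\frac{d\,|y|^2}{4(1+i\tau)^2}-\frac{|y|^4}{8(1+i\tau)^3}\Bigr).
\end{equation*}
Taking the real part against $|F_0|^{q-2}\overline{F_0}$, carrying out the $y$-integral (Gaussian moments) and then the $\tau$-integral (which reduces to $\int(1+\tau^2)^{-2}\,d\tau=\int\tau^2(1+\tau^2)^{-2}\,d\tau=\pi/2$), one finds $q\,\iint|F_0|^{q-2}\re\bigl(\overline{F_0}\,\partial_\nu F_{\sqrt\nu}|_0\bigr)\big/\iint|F_0|^q=\frac{8-d^2}{16}$. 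Adding the two contributions gives $\frac{8-d^2}{16}+\frac{d^2-4}{16}=\frac14$, which is the asserted (and, notably, $d$-independent) coefficient.

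Two points need care. First, $\chi$ and the truncation $\{|\xi|<1/\epsilon\}$ contribute only errors exponentially small in $\epsilon^{-2}$: since $m\ge\frac12$ on $[0,1]$ and $\chi$ is supported away from $0$, the amplitude in $F_\epsilon$ is bounded by $C e^{-|\xi|^2/2}\1_{\{|\xi|\le c/\epsilon\}}$ uniformly in $\epsilon$ for some $c<1$, so all tails are dominated by a fixed Gaussian and the cutoffs may be removed. Second — the genuine technical obstacle — one must justify differentiation under the $dy\,d\tau$-integral uniformly in the \emph{unbounded} space-time variables, because $\partial_\nu F_{\sqrt\nu}$ carries an extra factor $\tau$. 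Here I would invoke the uniform-in-$\delta$ bound for the operators $\mathcal T_\delta$ from Lemma \ref{tbounded} (of which $F_\epsilon$ is essentially an instance) together with the dispersive decay $\|e^{i\tau\Delta/2}\psi_G\|_{L^\infty_y}\lesssim(1+|\tau|)^{-d/2}$, which makes $|F_0|^q$ integrable with room to spare, to control the Taylor remainder; writing $F_\epsilon=F_0+\epsilon^2\partial_\nu F_{\sqrt\nu}|_0+\epsilon^4 R_\epsilon$ with $\sup_\epsilon\|R_\epsilon\|_{L^q(\R\times\R^d)}<\infty$ then closes the argument. (The same scheme also yields Proposition \ref{prop:expansion-gaussian}: the cross terms between the north and south bumps oscillate at frequency $\sim\epsilon^{-2}\to\infty$ and average out exactly as in the definition of $\Phi_q$, producing the prefactor $\frac{2^{q/2}}{\sqrt\pi}\Gamma(\frac{q+1}{2})/\Gamma(\frac{q+2}{2})$ via Lemma \ref{equal} while leaving the $\frac14\epsilon^2$ untouched.)
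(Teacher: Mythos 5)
Your computation reproduces the paper's own proof of Lemma~\ref{singlebump} almost line by line: your $F_\epsilon$ equals $(2\pi)^{N/2}\phi_\epsilon$ with $\phi_\epsilon$ as in \eqref{eq:phiepsilon}; your expansions of $m$ and the amplitude are \eqref{eq:expansion1}--\eqref{eq:expansion2}; your closed form for $\partial_\nu F_{\sqrt\nu}|_0$ is exactly the paper's $\epsilon^2$-term in the pointwise expansion of $\phi_\epsilon$; your $H$-computation is \eqref{eq:expansion-L2norm}; and the Gaussian and arctangent integrals producing $\tfrac{8-d^2}{16}+\tfrac{d^2-4}{16}=\tfrac14$ are the same ones the paper uses. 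So the approach and all the arithmetic agree.

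There is, however, a genuine gap in your justification of differentiability, which is precisely the point the paper flags as ``the crucial point.'' You propose to write $F_\epsilon=F_0+\epsilon^2\,\partial_\nu F_{\sqrt\nu}|_0+\epsilon^4 R_\epsilon$ with $\sup_\epsilon\|R_\epsilon\|_{L^q(\R\times\R^d)}<\infty$, invoking Lemma~\ref{tbounded} and the dispersive decay of $F_0$. Neither gives you this: Lemma~\ref{tbounded} bounds $\|F_\epsilon\|_{L^q}$ uniformly, which says nothing about the Taylor quotient, and the dispersive decay is a property of $F_0$, not of $R_\epsilon$. The remainder involves second $\nu$-derivatives of the phase at intermediate $\nu>0$, which pull down factors of $\tau^2$; the closed Gaussian form that gives your $\partial_\nu F_{\sqrt\nu}|_0$ its extra $(1+i\tau)^{-1}$ decay is only available at $\nu=0$, and the uniform-in-$\nu$ stationary-phase bound is of the form $|\partial_\nu\phi_\epsilon|\lesssim|x_N|(1+|x|)^{-d/2+\sigma}$. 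That extra power of $|x_N|$ is fatal for $L^q$-integrability (after the $y$-integral, $|F_0|^q\sim(1+\tau^2)^{-1}$ — just barely integrable, with essentially no room to absorb extra powers of $\tau$). The paper circumvents this by not Taylor-expanding $\phi_\epsilon$ in $L^q$ at all; it instead differentiates the scalar $\int|\phi_\epsilon|^q$ directly, writing $\partial_{\epsilon^2}\int|\phi_\epsilon|^q=q\int|\phi_\epsilon|^{q-2}\re(\overline{\phi_\epsilon}\,\partial_{\epsilon^2}\phi_\epsilon)$, and dominates the integrand using the cancellation that occurs only inside the real part: $|\re(\overline{\phi_\epsilon}\,\partial_{\epsilon^2}\phi_\epsilon)|\lesssim(1+|x|)^{-d+2\sigma}$, whereas the modulus of the product grows by an extra $|x_N|$. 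You need this cancellation (or a substitute for it); as written, your remainder bound is unsupported and is exactly what the $|x_N|$-growth obstructs.
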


Before proving the lemma, let us use it to give the

\begin{proof}[Proof of Proposition \ref{prop:expansion-gaussian}]
With $g_\epsilon$ from \eqref{eq:trialsingle} we shall show that
\begin{equation}
\label{eq:singlebump}
\frac{\int_{\R^N}|\check{f_\epsilon}|^q\,dx}{\|f_\epsilon\|^q}
= \frac{2^{q/2}}{\sqrt{\pi}}\frac{\Gamma\left(\frac{q+1}{2}\right)}{\Gamma\left(\frac{q+2}{2}\right)}
\frac{\int_{\R^N}|\check{g_\epsilon}|^q\,dx}{\|g_\epsilon\|^q}
+ \cO(\epsilon^4)\,. 
\end{equation}
This, together with Lemma \ref{singlebump}, implies the proposition.

Clearly, we have
\begin{equation}
\label{eq:singlebump1}
\| f_\epsilon \|^q = 2^{q/2} \| g_\epsilon \|^q \,.
\end{equation}
We also note the rough bound
\begin{equation}
\label{eq:singlebump1a}
\| g_\epsilon \|^q \geq c \epsilon^{d+2} \,.
\end{equation}
(We will prove something much more precise in the proof of Lemma \ref{singlebump}.) Moreover, let
\begin{equation}
\label{eq:phiepsilon}
\phi_\epsilon(x) := \frac{1}{(2\pi)^{N/2}} \int_{\R^d}e^{ix'\cdot\eta-\epsilon^{-2}(1-\sqrt{1-\epsilon^2|\eta|^2})(1+ix_N)}\frac{\chi(\sqrt{1-\epsilon^2|\eta|^2})}{\sqrt{1-\epsilon^2|\eta|^2}}\,d\eta
\end{equation}
and note that
$$
\epsilon^{-d}\check{f_\epsilon}(x'/\epsilon,x_N/\epsilon^2) = 2\re\left( e^{ix_N/\epsilon^2}\phi_\epsilon(x) \right)
\quad\text{and}\quad
\epsilon^{-d}\check{g_\epsilon}(x'/\epsilon,x_N/\epsilon^2) = e^{ix_N/\epsilon^2} \phi_\epsilon(x) \,.
$$
We claim that
\begin{equation}
\label{eq:singlebump2}
\int_{\R^N}\left|\re\left(e^{ix_N/\epsilon^2} \phi_\epsilon(x)\right) \right|^q\,dx=\frac{1}{2\pi}\int_{\R^d} \int_{-\pi}^{\pi}\left|\re \left(e^{i\theta}\phi_\epsilon(x)\right)\right|^q\,d\theta\,dx+\cO(\epsilon^4) \,.
\end{equation}
Since, as in \eqref{eq:gammafcn}, for any $a\in\C$,
$$
\frac{1}{2\pi} \int_0^{2\pi}\left|\re \left(e^{i\theta} a\right)\right|^q\,d\theta = \frac{1}{2\pi} \int_{-\pi}^{\pi} |\cos\theta|^q \,d\theta\ |a|^q = \frac{1}{\sqrt\pi} \frac{\Gamma(\frac{q+1}{2})}{\Gamma(\frac{q+2}{2})} \, |a|^q \,,
$$
we infer from \eqref{eq:singlebump2} that after scaling
$$
\int_{\R^N}|\check{f_\epsilon}|^q\,dx = \frac{2^q}{\sqrt\pi} \frac{\Gamma(\frac{q+1}{2})}{\Gamma(\frac{q+2}{2})}\int_{\R^N}|\check{g_\epsilon}|^q\,dx + \mathcal O(\epsilon^{4+(d+2)}) \,.
$$
This, together with \eqref{eq:singlebump1} and \eqref{eq:singlebump1a}, implies \eqref{eq:singlebump}.

Let us prove \eqref{eq:singlebump2}. We introduce the function
$$
a(x,\theta)= \left|\re \left(e^{i\theta}\phi_\epsilon(x)\right)\right|^q
\quad\forall (x,\theta)\in\R^N\times[-\pi,\pi] \,.
$$
Differentiating in $\theta$, we find that there is a $C>0$ such that for all $x\in\R^N$ and all $\epsilon>0$,
\begin{equation}
\label{eq:homoproof}
\|a(x,\cdot)\|_{L^\ii_\theta}+\|\partial_\theta^2 a(x,\cdot)\|_{L^\ii_\theta}\le C|\phi_\epsilon(x)|^q \,.
\end{equation}
As a consequence, we may expand $a$ as an absolutely convergent Fourier series
$$
a(x,\theta)=\sum_{n\in\Z}c_n(x)e^{in\theta},\quad\forall(x,\theta)\in\R^N\times[-\pi,\pi]
$$
with
$$
c_n(x):=\int_{-\pi}^{\pi}a(x,\theta)e^{-in\theta}\,\frac{d\theta}{2\pi} \,.
$$
By integration by parts and \eqref{eq:homoproof}, we find the bound
$$
|c_n(x)|\le\frac{C}{1+n^2}|\phi_\epsilon(x)|^q \,.
$$
By standard stationary phase arguments one can show that $\phi_\epsilon$ is bounded in $L^q(\R^N)$ uniformly for small $\epsilon>0$ and we obtain
$$
\int_{\R^N}\left|\re\left(e^{ix_N/\epsilon^2} \phi_\epsilon(x)\right)\right|^q dx
= \int_{\R^N} a(x,x_N/\epsilon^2) \,dx
=\sum_{n\in\Z}\int_{\R^N}e^{inx_N/\epsilon^2}c_n(x)\,dx \,.
$$
Hence, in order to prove \eqref{eq:singlebump2} we will prove
\begin{equation}
\label{eq:homogoal}
\sum_{n\neq0}\int_{\R^N}e^{inx_N/\epsilon^2}c_n(x)\,dx=\cO(\epsilon^4) \,.
\end{equation}
Integrating by parts, we have
$$
\int_{\R^N}e^{inx_N/\epsilon^2}c_n(x)\,dx=-\frac{\epsilon^4}{n^2}\int_{\R^N}e^{inx_N/\epsilon^2}\partial_{x_N}^2c_n(x)\,dx \,,
$$
and thus it is sufficient to bound $\|\partial_{x_N}^2c_n\|_{L^1_x}$ uniformly in $n$ and $\epsilon$. This bound again follows from stationary phase arguments, which imply that $\phi_\epsilon$, $\partial_{x_N}\phi_\epsilon$, and $\partial_{x_N}^2\phi_\epsilon$ are bounded in $L^q(\R^N)$, uniformly for small $\epsilon>0$. In this way we obtain \eqref{eq:homogoal} and therefore \eqref{eq:singlebump2} and \eqref{eq:singlebump}.
\end{proof}

Finally, we prove Lemma \ref{singlebump}. We will make repeated use of the Gaussian integrals
\begin{align}
\label{eq:gaussian-integral}
\int_{\R^d}e^{ix'\cdot\eta-\frac{s}{2}|\eta|^2}\,d\eta
& =\left(\frac{2\pi}{s}\right)^{d/2}e^{-\frac{1}{2s}|x'|^2} \,,\\
\label{eq:gaussian-integral2}
\int_{\R^d}e^{ix'\cdot\eta-\frac{s}{2}|\eta|^2}|\eta|^2\,d\eta
& =\left[\frac{d}{s}-\frac{|x'|^2}{s^2}\right]\left(\frac{2\pi}{s}\right)^{d/2}e^{-\frac{1}{2s}|x'|^2} \,, \\
\label{eq:gaussian-integral3}
\int_{\R^d}e^{ix'\cdot\eta-\frac{s}{2}|\eta|^2}|\eta|^4\,d\eta
& =\left[\frac{d(d+2)}{s^2}-\frac{2(d+2)|x'|^2}{s^3}+\frac{|x'|^4}{s^4}\right]\left(\frac{2\pi}{s}\right)^{d/2}e^{-\frac{1}{2s}|x'|^2} \,,
\end{align}
as well as the identities
\begin{equation}\label{eq:integrals-arctan}
\int_\R\frac{dx_N}{1+x_N^2}=\pi
\qquad\text{and}\qquad
\int_\R\frac{dx_N}{(1+x_N^2)^2}=\frac\pi2 \,.
\end{equation}

\begin{proof}[Proof of Lemma \ref{singlebump}]
With $\phi_\epsilon$ from \eqref{eq:phiepsilon} we note that
$$
\Psi(\epsilon^2):=\log\frac{\int_{\R^N}|\check{g_\epsilon}|^q\,dx}{\|g_\epsilon\|^q}=\log\frac{\int_{\R^N}|\phi_\epsilon|^q\,dx}{\|\epsilon^{-d/2} g_\epsilon\|^q} \,.
$$

We begin by studying $\|\epsilon^{-d/2} g_\epsilon\|^q$. Expanding
\begin{equation}
\label{eq:expansion1}
\epsilon^{-2}(1-\sqrt{1-\epsilon^2|\eta|^2})=\frac12|\eta|^2+\frac18\epsilon^2|\eta|^4+\cO(\epsilon^4|\eta|^6)
\end{equation}
and
\begin{equation}
\label{eq:expansion2}
\frac{\chi(\sqrt{1-\epsilon^2|\eta|^2})}{\sqrt{1-\epsilon^2|\eta|^2}}=1+\frac12\epsilon^2|\eta|^2+\cO(\epsilon^4|\eta|^4)
\end{equation}
(with the same expansion when $\chi$ is replaced by $\chi^2$), we obtain
\begin{align*}
\epsilon^{-d} \int_{\Sph^d}|g_\epsilon(\omega)|^2\,d\omega
& = \int_{\R^d}e^{-2\epsilon^{-2}(1-\sqrt{1-\epsilon^2|\eta|^2})}\frac{\chi(\sqrt{1-\epsilon^2|\eta|^2})^2}{\sqrt{1-\epsilon^2|\eta|^2}}\,d\eta \\
& = \int_{\R^d}e^{-|\eta|^2}\,d\eta+\epsilon^2\int_{\R^d}e^{-|\eta|^2}\left[\frac12 |\eta|^2-\frac14|\eta|^4\right]d\eta+\cO(\epsilon^4) \,.
\end{align*}
Using the formulas for Gaussian integrals \eqref{eq:gaussian-integral}, \eqref{eq:gaussian-integral2} and \eqref{eq:gaussian-integral3} we find that
\begin{equation}\label{eq:expansion-L2norm}
\epsilon^{-d}\int_{\Sph^d}|g_\epsilon(\omega)|^2\,d\omega=\pi^{d/2}+\frac{d(2-d)}{16}\pi^{d/2}\epsilon^2+\cO(\epsilon^4) \,.
\end{equation}
Note that the leading term coincides with
\begin{equation}
\label{eq:gaussian-L2norm}
\int_{\R^d} |\psi_G(x)|^2\,dx = \int_{\R^d} e^{-x^2}\,dx = \pi^{d/2} \,.
\end{equation}

Next, we discuss the asymptotics of $\|\phi_\epsilon\|_q$. Using expansions \eqref{eq:expansion1} and \eqref{eq:expansion2} and routine stationary phase arguments we obtain
\begin{align*}
\phi_\epsilon(x) & =\frac{1}{(2\pi)^{N/2}}\int_{\R^d}e^{ix'\cdot\eta-\frac12|\eta|^2(1+ix_N)}\,d\eta+o_{L^q_x(\R^N)}(1) \\
& = \frac{1}{(2\pi)^{1/2}}\left(\frac{1}{1+ix_N}\right)^{d/2}e^{-\frac{|x'|^2}{2(1+ix_N)}} +o_{L^q_x(\R^N)}(1)\,.
\end{align*}
The last identity used again \eqref{eq:gaussian-integral}. Thus,
\begin{align}
    \lim_{\epsilon^2\to0} \int_{\R^N}|\phi_\epsilon(x)|^q\,dx & =\frac{1}{(2\pi)^{q/2}} \, \int_{\R^N}(1+x_N^2)^{-dq/4}e^{-\frac{q|x'|^2}{2(1+x_N^2)}}\,dx\notag\\
    &=\frac{1}{(2\pi)^{q/2}} \, \iint_{\R\times\R^d}\left|\left(e^{it\Delta/2}\psi_G\right)(y)\right|^q \,dy\,dt \,, \label{eq:exact-expression-Lqnorm}
\end{align}
where the last identity used the Gaussian integral \eqref{eq:gaussian-integral}. Note that \eqref{eq:expansion-L2norm}, \eqref{eq:gaussian-L2norm} and \eqref{eq:exact-expression-Lqnorm} imply that
\begin{equation*}
  \lim_{\epsilon^2\to0}\Psi(\epsilon^2)=\log \cS_{d}^G \,,
\end{equation*}
which gives us the leading term in the lemma.

We claim that $\epsilon^2\mapsto \int_{\R^N}|\phi_\epsilon(x)|^q\,dx$ is differentiable at $\epsilon^2=0$ and that
\begin{equation}
\label{eq:compqnormderivative}
\partial_{\epsilon^2}\left(\int_{\R^N}|\phi_\epsilon|^q\,dx\right)|_{\epsilon^2=0} 
= \frac{1}{(2\pi)^{q/2}}\left(\frac{2\pi}{q}\right)^{d/2} \frac{\pi q d}{2(d+2)}\left[\frac12-\frac{d^2}{16}\right].
\end{equation}
We will discuss this below in some detail. Once this claim is shown, it is easy to complete the proof of the lemma. In fact, we note that
\begin{equation}
\label{eq:compderivative}
\partial_{\epsilon^2}\Psi(\epsilon^2)|_{\epsilon^2=0}=\frac{\partial_{\epsilon^2}\left(\int_{\R^N}|\phi_\epsilon|^q\,dx\right)|_{\epsilon^2=0}}{\lim_{\epsilon^2\to0}\int_{\R^N}|\phi_\epsilon|^q\,dx}
- \frac q2 \frac{\partial_{\epsilon^2}\left(\|\epsilon^{-d/2} g_\epsilon\|^2\right)|_{\epsilon^2=0}}{\lim_{\epsilon^2\to0}\|\epsilon^{-d/2} g_\epsilon\|^2} \,,
\end{equation}
and we recall from \eqref{eq:expansion-L2norm} that
\begin{equation}
\label{eq:compderivative2}
  \frac{\partial_{\epsilon^2}\left(\| \epsilon^{-d/2} g_\epsilon\|^2\right)|_{\epsilon^2=0}}{\lim_{\epsilon^2\to0}\|\epsilon^{-d/2} g_\epsilon\|^2}=\frac{d(2-d)}{16} \,.
\end{equation}
By the first identity in \eqref{eq:integrals-arctan} and the Gaussian integral \eqref{eq:gaussian-integral}, we compute from \eqref{eq:exact-expression-Lqnorm}
\begin{equation}
\lim_{\epsilon^2\to0}\int_{\R^N}|\phi_\epsilon|^q\,dx= \frac{1}{(2\pi)^{q/2}} \left(\frac{2\pi}{q}\right)^{d/2}\pi \,,
\end{equation}
which, combined with \eqref{eq:compqnormderivative}, gives
\begin{equation}
\label{eq:compderivative3}
  \frac{\partial_{\epsilon^2}\left(\int_{\R^N}|\phi_\epsilon|^q\,dx\right)|_{\epsilon^2=0}}{\lim_{\epsilon^2\to0}\int_{\R^N}|\phi_\epsilon|^q\,dx}=\frac{q}{2}\frac{d}{d+2}\left[\frac12-\frac{d^2}{16}\right]=\frac12-\frac{d^2}{16} \,.
\end{equation}
Inserting \eqref{eq:compderivative2} and \eqref{eq:compderivative3} into \eqref{eq:compderivative} leads to
  $$
  \partial_{\epsilon^2}\Psi(\epsilon^2)|_{\epsilon^2=0}=\frac12-\frac{d^2}{16}-\frac{q}{2}\frac{d(2-d)}{16}=\frac12-\frac{d^2}{16}-\frac{4-d^2}{16}=\frac14 \,,
  $$
which is the result stated in the lemma.
  
Thus, it remains to justify the claim \eqref{eq:compqnormderivative}. Using stationary phase arguments one can show that, for any $\sigma>0$,
$$
|\phi_\epsilon(x)| \leq C_\sigma (1+|x|)^{-d/2+\sigma}
\qquad\text{for all}\ x\in\R^N\,,\ \epsilon>0
$$
and
$$
\left|\re(\overline{\phi_\epsilon} \partial_{\epsilon^2} \phi_\epsilon(x))\right| \leq C_\sigma (1+|x|)^{-d+2\sigma}
\qquad\text{for all}\ x\in\R^N\,,\ \epsilon>0 \,.
$$
(The crucial point here is the real part which leads to a cancellation. Without the real part one can only obtain a similar bound with an additional factor of $|x_N|$, which is not good enough to prove differentiability.) These bounds imply by dominated convergence that $\epsilon^2\mapsto \int_{\R^N}|\phi_\epsilon(x)|^q\,dx$ is differentiable at any $\epsilon^2\geq 0$ and that
\begin{equation}
\label{eq:qnormderivative}
\partial_{\epsilon^2}\left(\int_{\R^N}|\phi_\epsilon|^q\,dx\right) = q\int_{\R^N} |\phi_\epsilon|^{q-2}\re\left(\overline{\phi_\epsilon}\partial_{\epsilon^2}\phi_\epsilon\right) dx \,.
\end{equation}
Recalling \eqref{eq:phiepsilon}, \eqref{eq:expansion1} and \eqref{eq:expansion2} we expand pointwise
\begin{align*}
\phi_\epsilon(x)& = \frac{1}{(2\pi)^{N/2}}\int_{\R^d}e^{ix'\cdot\eta-\frac12|\eta|^2(1+ix_N)}\,d\eta\\
  & \quad +\frac{\epsilon^2}{(2\pi)^{N/2}}\int_{\R^d}e^{ix'\cdot\eta-\frac12|\eta|^2(1+ix_N)}\left[\frac12 |\eta|^2-\frac18|\eta|^4(1+ix_N)\right]\,d\eta+o(\epsilon^2)\\
  &= \frac{1}{(2\pi)^{1/2}}\left(\frac{1}{1+ix_N}\right)^{d/2}e^{-\frac{|x'|^2}{2(1+ix_N)}} \\
&\quad +\frac{\epsilon^2}{(2\pi)^{1/2}}\left[\frac{d(2-d)}{8(1+ix_N)}+\frac{d|x'|^2}{4(1+ix_N)^2}-\frac{|x'|^4}{8(1+ix_N)^{3}}\right]\left(\frac{1}{1+ix_N}\right)^{d/2}e^{-\frac{|x'|^2}{2(1+ix_N)}} \\
 &\quad +o(\epsilon^2) \,.
\end{align*}
Here we used the Gaussian integral formulas \eqref{eq:gaussian-integral}, \eqref{eq:gaussian-integral2} and \eqref{eq:gaussian-integral3}. We obtain
\begin{align*}
|\phi_\epsilon|^{q-2}\re\left(\overline{\phi_\epsilon}\partial_{\epsilon^2}\phi_\epsilon \right)|_{\epsilon^2=0}
& = \left| \frac{1}{(2\pi)^{1/2}}\left(\frac{1}{1+ix_N}\right)^{d/2}e^{-\frac{|x'|^2}{2(1+ix_N)}} \right|^q \\
& \qquad \times \re \left[\frac{d(2-d)}{8(1+ix_N)}+\frac{d|x'|^2}{4(1+ix_N)^2}-\frac{|x'|^4}{8(1+ix_N)^{3}}\right] \\
& = \frac{1}{(2\pi)^{q/2}} \left(\frac1{1+x_N^2}\right)^{dq/4} e^{-\frac{q|x'|^2}{2(1+x_N^2)}} \\
& \qquad \times\left[\frac{d(2-d)}{8(1+x_N^2)}+\frac{d|x'|^2(1-x_N^2)}{4(1+x_N^2)^2}-\frac{|x'|^4(1-3x_N^2)}{8(1+x_N^2)^3}\right] .
\end{align*}
Finally, we integrate this identity over $x\in\R^d$ and recall \eqref{eq:qnormderivative}. We change variables $x'=(1+x_N^2)^{1/2}y$, compute Gaussian integrals and use \eqref{eq:integrals-arctan} to obtain
\begin{align*}
\partial_{\epsilon^2}\left(\int_{\R^N}|\phi_\epsilon|^q\,dx\right)|_{\epsilon^2=0}& =\frac{q}{(2\pi)^{q/2}}\iint_{\R\times\R^{N-1}}\,dx_N\,dy\,(1+x_N^2)^{-2}e^{-\frac q2 |y|^2}\times\\
& \qquad \times\left[\frac{d(2-d)}{8}+\frac{d|y|^2(1-x_N^2)}{4}-\frac{|y|^4(1-3x_N^2)}{8}\right] \\
& = \frac{q}{(2\pi)^{q/2}}\left(\frac{2\pi}{q}\right)^{d/2}\int_{\R}\,dx_N (1+x_N^2)^{-2}\times\\
& \qquad \times\left[\frac{d(2-d)}{8}+\frac{d^2(1-x_N^2)}{4q}-\frac{d(d+2)(1-3x_N^2)}{8q^2}\right] \\
& = \frac{1}{(2\pi)^{q/2}}\left(\frac{2\pi}{q}\right)^{d/2}\times\frac{\pi q d}{2(d+2)}\left[\frac12-\frac{d^2}{16}\right].
\end{align*}
This proves \eqref{eq:compqnormderivative}.
\end{proof}


\appendix

\section{Refinement of a perturbed Strichartz inequality}\label{sec:strichref}

In this appendix we show that the method from \cite{Tao-09,KilVis-book} can be used to prove the refinement of the perturbed Strichartz inequality in Proposition \ref{stref}. We actually prove it in the setting of elliptic-type phases as defined in \cite{TaoVarVeg-98}, thus we do not restrict ourselves to the case of the sphere with the function $T$. Instead, let $\Phi$ a smooth real function defined on a neighborhood of the origin in $\R^d$, satisfying $\text{Hess}\,\Phi(0)=\text{Id}$. We consider the general phase $\xi\mapsto\Phi(\xi)$ instead of $\xi\mapsto T(|\xi|^2)=1-\sqrt{1-|\xi|^2}$. We also recall that we denote the dimension by $d\geq 1$ and that
$$
q= 2+ 4/d \,.
$$

As we mentioned before, the main ingredient is a deep bilinear restriction estimate due to Tao. To state this result we introduce the notation
$$
Q\sim Q'
$$
for two dyadic cubes $Q,Q'\in\mathcal D$ to mean that they have the same side length and are not adjacent (i.e., their closures do not intersect), but their parents are adjacent. In the sequel, we use the shortcut notation for any $\psi\in L^2_x(\R^d)$ and any $(t,x)\in\R\times\R^d$,
$$
\Psi_Q(t,x) := \left( e^{-it\Phi(-i\nabla)}\psi_Q \right)(x),
$$
where we recall that $\hat{\psi_Q}:=\1_Q\hat{\psi}$.

\begin{theorem}\label{thm:bilinear-estimate}
Let $\frac{d+3}{d+1}<p<\frac{d+2}{d}$. There are $\epsilon>0$ and $C>0$ such that for all $\psi\in L^2(\R^d)$ with $\supp\hat\psi\subset \{|\xi|\leq\epsilon\}$ and for all $Q\sim Q'$ we have
\begin{equation}\label{eq:bilinear-estimate}
\left\| \Psi_Q\Psi_{Q'} \right\|_{L^p_{t,x}}\leq C |Q|^{1-\frac{d+2}{pd}}\|\psi_Q\|_{L^2_x}\|\psi_{Q'}\|_{L^2_x} \,.
\end{equation}
\end{theorem}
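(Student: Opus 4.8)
The plan is to deduce \eqref{eq:bilinear-estimate} from Tao's bilinear restriction theorem \cite{Tao-03} at \emph{unit} frequency scale — in the form valid for phases of elliptic type, cf.\ \cite{TaoVarVeg-98} — by a parabolic rescaling that turns the pair $Q\sim Q'$ into a pair of unit cubes. First I would dispose of the trivial case: if $\psi_Q\equiv0$ or $\psi_{Q'}\equiv0$ both sides of \eqref{eq:bilinear-estimate} vanish, so we may assume that $Q$ and $Q'$ both meet $\{|\xi|\le\epsilon\}$. Since $Q\sim Q'$ they share a common side length $\ell$ (write $|Q|=\ell^d$) and $\dist(Q,Q')\in[\ell,2\ell]$; because both meet a ball of radius $\epsilon$ this forces $\ell\le 2\epsilon$ and $Q\cup Q'\subset B(\xi_0,C\ell)$ for some $\xi_0$ with $|\xi_0|\le C\epsilon$. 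This is the only place the support hypothesis on $\hat\psi$ is used, and it is essential.

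Next comes the rescaling. Substituting $\xi=\xi_0+\ell\eta$ and setting $\hat g(\eta):=\ell^{d/2}\,\hat\psi_Q(\xi_0+\ell\eta)$ (so that $\|g\|_{L^2}=\|\psi_Q\|_{L^2}$ and $\hat g$ is supported in the unit cube $R:=\ell^{-1}(Q-\xi_0)$), and defining $g'$ analogously with $\hat g'$ supported in $R':=\ell^{-1}(Q'-\xi_0)$, one has $R\sim R'$ and $R\cup R'\subset B(0,C)$. Expanding $\Phi$ about $\xi_0$, absorbing the constant term into a modulation and the linear term $\ell\,\nabla\Phi(\xi_0)\cdot\eta$ into a Galilean boost $x\mapsto x-t\,\nabla\Phi(\xi_0)$, a direct computation gives, with $u(s,y):=(e^{-is\tilde\Phi(-i\nabla)}g)(y)$, $u'(s,y):=(e^{-is\tilde\Phi(-i\nabla)}g')(y)$ and
$$
\tilde\Phi(\eta):=\ell^{-2}\bigl(\Phi(\xi_0+\ell\eta)-\Phi(\xi_0)-\ell\,\nabla\Phi(\xi_0)\cdot\eta\bigr),
$$
the pointwise identities
$$
|\Psi_Q(t,x)|=\ell^{d/2}\,\bigl|u\bigl(\ell^2 t,\ell(x-t\,\nabla\Phi(\xi_0))\bigr)\bigr|,\qquad |\Psi_{Q'}(t,x)|=\ell^{d/2}\,\bigl|u'\bigl(\ell^2 t,\ell(x-t\,\nabla\Phi(\xi_0))\bigr)\bigr|,
$$
where the common point $\xi_0$, side length $\ell$ and boost velocity $\nabla\Phi(\xi_0)$ are used for both. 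The key point is that $\tilde\Phi$ is again a phase of elliptic type \emph{with bounds uniform over all admissible pairs}: $\text{Hess}\,\tilde\Phi(0)=\text{Hess}\,\Phi(\xi_0)=\text{Id}+O(\epsilon)$ by continuity of the Hessian, and every higher derivative of $\tilde\Phi$ on $\{|\eta|\le 2\}$ is bounded by the supremum of the corresponding derivative of $\Phi$ on a fixed small neighbourhood of the origin (since $\xi_0+\ell\eta$ stays in such a neighbourhood when $\ell\le 2\epsilon$). Hence, for $\epsilon$ small enough that in particular $\text{Hess}\,\tilde\Phi$ is uniformly positive definite, Tao's bilinear theorem at unit scale applies to $\tilde\Phi$ with a constant depending only on $d$ and $p$, yielding $\|uu'\|_{L^p_{s,y}}\le C\,\|g\|_{L^2}\|g'\|_{L^2}$.

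Finally I would undo the change of variables: the map $(t,x)\mapsto(s,y)=(\ell^2 t,\ell(x-t\,\nabla\Phi(\xi_0)))$ is linear with Jacobian $\ell^{d+2}$, so combining the two displayed identities,
$$
\|\Psi_Q\Psi_{Q'}\|_{L^p_{t,x}}=\ell^{\,d-(d+2)/p}\,\|uu'\|_{L^p_{s,y}}\le C\,\ell^{\,d-(d+2)/p}\,\|g\|_{L^2}\|g'\|_{L^2},
$$
and since $\|g\|_{L^2}=\|\psi_Q\|_{L^2}$, $\|g'\|_{L^2}=\|\psi_{Q'}\|_{L^2}$ and $\ell^{\,d-(d+2)/p}=|Q|^{\,1-(d+2)/(pd)}$, this is exactly \eqref{eq:bilinear-estimate}. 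The main obstacle is securing the \emph{uniformity} of the constant $C$: one must verify that all the rescaled phases $\tilde\Phi$ arising from admissible pairs $Q\sim Q'$ lie in a single class of elliptic-type phases to which Tao's theorem applies with one fixed constant, and this is precisely what $\supp\hat\psi\subset\{|\xi|\le\epsilon\}$ provides, by forcing $|\xi_0|$ and $\ell$ to be $O(\epsilon)$ so that $\tilde\Phi$ remains a small, smoothly controlled perturbation of $\tfrac12|\eta|^2$. Everything else — the Galilean and parabolic changes of variable, and the bookkeeping of Jacobians and $L^2$ normalizations — is routine but should be carried out carefully so that moduli are preserved; Tao's induction-on-scales argument itself is not reproduced, only its conclusion at unit scale is invoked.
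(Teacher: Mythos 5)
Your argument is correct and follows essentially the same route as the paper's: a parabolic rescaling centered at a point near $Q\cup Q'$, subtraction of the constant and linear Taylor terms of $\Phi$ (a modulation plus a Galilean boost), the observation that the resulting rescaled phases form a uniformly controlled family of elliptic-type phases because $\ell$ and $|\xi_0|$ are $O(\epsilon)$, and a single application of Tao's bilinear theorem from \cite{Tao-03,TaoVarVeg-98} at unit scale. The only cosmetic difference is that the paper anchors the rescaling at the corner $\delta k$ of $Q$, whereas you use a generic nearby $\xi_0$; the Jacobian bookkeeping and the resulting exponent $|Q|^{1-(d+2)/(pd)}$ agree.
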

 
This theorem follows by a rather standard parabolic rescaling argument from Tao's sharp bilinear estimates on the paraboloid \cite{Tao-03} and from earlier bilinear estimates due to Tao--Vargas--Vega \cite{TaoVarVeg-98}. We present this derivation for the sake of completeness. We also remark that the assumption $p>\frac{d+3}{d+1}$ is sharp, but that for our purpose the inequality with any $p$ satisfying $p<\frac{d+2}{d}$ would be sufficient.

\begin{proof}
  Let $Q=\delta k+[0,\delta)^d$ and $Q'=\delta k'+[0,\delta)^d$ with $\delta\in 2^\Z$, $k,k'\in\Z^d$, $0<|k-k'|=\cO(1)$ and $\delta k=\cO(\epsilon)$, $\delta k'=\cO(\epsilon)$, $\delta=\cO(\epsilon)$. The parabolic rescaling leads to 
  $$\delta^{-d/2}\Psi_Q(t/\delta^2,x/\delta)=(2\pi)^{-d/2}\int_{\R^d}e^{ix\cdot\xi-it\delta^{-2}\Phi(\delta(k+\xi))}u_Q(\xi)\,d\xi,$$
  $$\delta^{-d/2}\Psi_{Q'}(t/\delta^2,x/\delta)=(2\pi)^{-d/2}\int_{\R^d}e^{ix\cdot\xi-it\delta^{-2}\Phi(\delta(k+\xi))}u_{Q'}(\xi)\,d\xi,$$
  where
  $$u_Q(\xi):=\delta^{d/2}\hat{\psi}(\delta(k+\xi))\1_{[0,1)^d}(\xi),$$
  $$u_{Q'}(\xi):=\delta^{d/2}\hat{\psi}(\delta(k+\xi))\1_{k'-k+[0,1)^d}(\xi).$$
  As a consequence, we may write 
  $$\left\|\Psi_Q\Psi_{Q'}\right\|_{L^p_{t,x}}=\delta^{d-\frac{d+2}{p}}\|Tu_QTu_{Q'}\|_{L^p_{t,x}},$$
  where
  $$Tg(t,x)=\int_{Q_0} e^{ix\cdot\xi-it\Phi_{\delta,k}(\xi)}g(\xi)\,d\xi,$$
  $Q_0$ is some big cube independent of $Q$ and $Q'$ containing both $[0,1)^d$ and $k'-k+[0,1)^d$, and
  $$\Phi_{\delta,k}(\xi)=\delta^{-2}\left[\Phi(\delta(k+\xi))-\Phi(\delta k)-\delta\nabla\Phi(\delta k)\cdot\xi\right]. 
  $$
  By a Taylor formula and the fact that $\delta k=\cO(\epsilon)$, $\delta=\cO(\epsilon)$, all the smooth norms of $\Phi_{\delta,k}$ are bounded uniformly in $(\delta,k)$ on $Q_0$. Furthermore,
  $$\text{Hess}\,\Phi_{\delta,k}-\text{Id}=\cO(\epsilon),$$
  also uniformly in $(\delta,k)$ on $Q_0$. We are thus in the setting the bilinear estimates of Tao \cite{Tao-03} (see the third remark at the end of the article), for elliptic-type compact surfaces as defined in \cite[Sec. 2]{TaoVarVeg-98}. We
   deduce that if $\epsilon>0$ is small enough, there exists $C>0$ independent of $Q$, $Q'$ and $g$ such that
  $$
  \left\|\Psi_Q\Psi_{Q'}\right\|_{L^p_{t,x}}\le C\delta^{d-\frac{d+2}{p}}\|u_Q\|_{L^2(\R^d)}\|u_{Q'}\|_{L^2(\R^d)}.
  $$
  Undoing all the change of variables that we performed, we find that
  $$
  \|u_Q\|_{L^2(\R^d)}\|u_{Q'}\|_{L^2(\R^d)}=\|\psi_Q\|_{L^2_x}\|\psi_{Q'}\|_{L^2_x},
  $$
  which implies the desired estimate.  
 \end{proof}

The next ingredient in the proof of Proposition \ref{steintomref} is the following improvement over the triangle inequality.

\begin{lemma}\label{lem:triangle-ineq-improved}
For $\epsilon>0$ small enough, there is a $C>0$ such that for all $\psi\in L^2(\R^d)$ with $\supp\hat\psi\subset\{|\xi|\leq \epsilon\}$,
\begin{equation}\label{eq:triangle-ineq-improved}
\left\|\sum_{Q\sim Q'} \Psi_Q\Psi_{Q'} \right\|_{L^{q/2}_{t,x}}^{q^*}
\leq C \sum_{Q\sim Q'}\left\|\Psi_Q\Psi_{Q'}\right\|_{L^{q/2}_{t,x}}^{q^*}
\end{equation}
with $q^*:=\min\{q/2,(q/2)'\}$.
\end{lemma}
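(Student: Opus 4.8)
The plan is to deduce \eqref{eq:triangle-ineq-improved} from a general almost-orthogonality principle: if finitely many functions have space-time Fourier supports with $O(1)$ overlap, then their sum obeys the improved triangle inequality in $L^r$ with exponent $r^*=\min(r,r')$ in place of $1$. Here $r=q/2=(d+2)/d$, so $r'=(d+2)/2$ and $r^*$ is exactly the $q^*$ of the statement; thus the whole content is to show that the family $\{\widehat{\Psi_Q\Psi_{Q'}}\}_{Q\sim Q'}$ has boundedly overlapping supports.

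First I would record the geometry. Since $\Psi_Q$ has space-time Fourier transform supported on $\{(\xi,-\Phi(\xi)):\xi\in\bar Q\}$, the product $\Psi_Q\Psi_{Q'}$ has Fourier support in
\[
E_{Q,Q'}:=\bigl\{(\xi+\xi',-\Phi(\xi)-\Phi(\xi')):\ \xi\in\bar Q,\ \xi'\in\bar Q'\bigr\}\subset\R^d\times\R .
\]
For a Whitney pair $Q\sim Q'$ of common side length $\ell\in2^\Z$, non-adjacency forces $|\xi-\xi'|\gtrsim\ell$ and adjacency of the parents forces $|\xi-\xi'|\lesssim\ell$, so $|\xi-\xi'|\asymp\ell$ throughout $E_{Q,Q'}$. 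The heart of the matter is that a given point $(\eta,\tau)$ lies in only $O(1)$ of the $E_{Q,Q'}$: if $(\eta,\tau)\in E_{Q,Q'}$ then $\xi+\xi'=\eta$ and $\Phi(\xi)+\Phi(\xi')=-\tau$ for some $\xi\in\bar Q,\ \xi'\in\bar Q'$, and introducing $F(\xi):=\Phi(\xi)+\Phi(\eta-\xi)$ --- which has Hessian $\approx 2\,\mathrm{Id}$ on the relevant neighbourhood of $0$ because $\mathrm{Hess}\,\Phi(0)=\mathrm{Id}$ and $\supp\hat\psi\subset\{|\xi|\le\epsilon\}$ --- a Taylor expansion about the minimiser $\xi=\eta/2$ gives
\[
-\tau=F(\xi)=2\Phi(\eta/2)+\tfrac14\,|\xi-\xi'|^2\,(1+O(\epsilon)) .
\]
Hence $-\tau-2\Phi(\eta/2)$, a quantity depending only on $(\eta,\tau)$, equals $\tfrac14|\xi-\xi'|^2(1+O(\epsilon))\asymp\ell^2$, so for $\epsilon$ small the point $(\eta,\tau)$ pins the scale $\ell$ down to $O(1)$ dyadic values; and for each such $\ell$ the constraints $\eta\in\bar Q+\bar Q'$ and $|c_Q-c_{Q'}|\asymp\ell$ confine the center $c_Q$ to a ball of radius $O(\ell)$ about $\eta/2$, leaving only $O(1)$ choices for $\{Q,Q'\}$. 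This gives the overlap bound.

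With the overlap bound in hand I would run the standard passage to $L^r$. Each $E_{Q,Q'}$ lies in a slab of thickness $\asymp\ell^2$ over the cube $Q+Q'$, which after an $O(1)$ shear becomes a box $R_{Q,Q'}$ of dimensions $\asymp\ell\times\cdots\times\ell\times\ell^2$, and the same argument as above shows that the constant dilates $CR_{Q,Q'}$ still have $O(1)$ overlap. For sequences $(g_{Q,Q'})$ with $\hat g_{Q,Q'}$ supported in $R_{Q,Q'}$ one has $\|\sum g_{Q,Q'}\|_{L^2}\lesssim(\sum\|g_{Q,Q'}\|_{L^2}^2)^{1/2}$ by Plancherel and the overlap bound, while $\|\sum g_{Q,Q'}\|_{L^1}\le\sum\|g_{Q,Q'}\|_{L^1}$ and $\|\sum g_{Q,Q'}\|_{L^\infty}\le\sum\|g_{Q,Q'}\|_{L^\infty}$ are trivial. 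Interpolating the $L^2$ estimate against whichever of the two trivial endpoints lies nearer to $r=q/2$ --- using smooth Fourier multipliers adapted to $2R_{Q,Q'}$ so that the vector-valued complex interpolation between the appropriate $\ell^s(L^p)$ spaces respects the support constraints --- gives
\[
\Bigl\|\sum_{Q\sim Q'}g_{Q,Q'}\Bigr\|_{L^{q/2}}^{q^*}\lesssim\sum_{Q\sim Q'}\|g_{Q,Q'}\|_{L^{q/2}}^{q^*},
\]
and specializing to $g_{Q,Q'}=\Psi_Q\Psi_{Q'}$ yields \eqref{eq:triangle-ineq-improved}. This almost-orthogonality lemma is essentially from \cite{TaoVarVeg-98}; see also \cite{KilVis-book}.

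The main obstacle is the scale-recovery step: one must squeeze enough out of the single time-frequency coordinate $\tau$ to reconstruct the dyadic scale $\ell$ of the Whitney pair, and this is precisely where ellipticity of $\Phi$ together with $\supp\hat\psi\subset\{|\xi|\le\epsilon\}$ enter --- they make the departure of $\Phi$ from the paraboloid contribute only a relative $O(\epsilon)$ error in the identity linking $\tau$ to $|\xi-\xi'|^2$, negligible once $\epsilon$ is small. The remaining delicate point, legitimizing the interpolation under the Fourier-support constraints, is routine after passing to the enclosing parallelepipeds and using adapted smooth multipliers.
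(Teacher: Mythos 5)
Your argument is essentially the paper's proof: both show, via a Taylor expansion of $\Phi(\eta)+\Phi(\eta')$ about the midpoint (yielding $-\tau=2\Phi(\eta/2)+\tfrac14|\xi-\xi'|^2(1+O(\epsilon))$), that the space-time Fourier supports of $\Psi_Q\Psi_{Q'}$ have $O(1)$ overlap, with ellipticity of $\Phi$ and the small-support hypothesis keeping the error relative $O(\epsilon)$, and then invoke the Tao--Vargas--Vega / Killip--Visan almost-orthogonality lemma by interpolating $L^2$ against the nearer trivial endpoint. The only difference is cosmetic: the paper packages the supports into explicit parallelepipeds $R(Q+Q')$ before applying the overlap count, whereas you argue directly that a point $(\eta,\tau)$ pins down the scale and center; the substance is the same.
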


 \begin{proof}[Proof of Lemma \ref{lem:triangle-ineq-improved}]
 We apply the result of Tao--Vargas--Vega \cite{TaoVarVeg-98}, or more precisely the version of \cite[Lem.~A.9 \& Proof of Prop.~4.24]{KilVis-book}. The space-time Fourier transform $\F_{t,x}$  of $\Psi_Q\Psi_{Q'}$ satisfies
 $$\supp\,\F_{t,x}[\Psi_Q\Psi_{Q'}]\subset\{(\eta+\eta',\Phi(\eta)+\Phi(\eta')),\ \eta\in Q,\ \eta'\in Q'\}.$$
 We include this last set into a similar parallelepided as in Killip-Visan [Proof of Prop. 4.24], which is then enough to obtain orthogonality. Taylor expansions leads to the formula
 $$\Phi(\eta)+\Phi(\eta')=2\Phi\left(\frac{\eta+\eta'}{2}\right)+a(\eta,\eta')|\eta-\eta'|^2,$$
 \begin{multline*}
    \Phi\left(\frac{\eta+\eta'}{2}\right)=\Phi\left(\frac{c(Q+Q')}{2}\right)+\frac{1}{2}\nabla\Phi\left(\frac{c(Q+Q')}{2}\right)\cdot(\eta+\eta'-c(Q+Q'))\\
    +b(\eta+\eta',c(Q+Q'))|\eta+\eta'-c(Q+Q')|^2,
 \end{multline*}
 where $c(Q+Q')$ denotes the center of the cube $Q+Q'$, for two functions $a$ and $b$ satisfying
 $$3/8\le a(\eta,\eta')\le1/8,\quad 3/16\le b(\eta+\eta',c(Q+Q'))\le 1/16,$$
 assuming that $\epsilon>0$ is small enough. We deduce that 
 $$\supp\,\F_{t,x}[\Psi_Q\Psi_{Q'}]\subset R(Q+Q'),$$
 where
 $$R(Q'')=\left\{(\eta,\omega),\ \eta\in Q'',\ \frac{869}{64}\le\frac{\omega-2\Phi\left(\frac12 c(Q'')\right)-\nabla\Phi\left(\frac12 c(Q'')\right)\cdot(\eta-c(Q''))}{(\text{diam}\, Q'')^2}\le\frac12\right\}.$$
 Again by a Taylor formula, we have 
 $$\frac34|c(Q'')-\eta|^2\le\Phi\left(\frac12 c(Q'')\right)-\Phi\left(\frac12 \eta\right)-\nabla\Phi\left(\frac12 c(Q'')\right)\cdot\frac12(c(Q'')-\eta)\le\frac14|c(Q'')-\eta|^2.$$
 We deduce that for any $(\eta,\omega)\in R(Q'')$, we have
 $$\left(\frac{869}{64}+\frac32\right)(\text{diam}\, Q'')^2\le \omega-\frac12\Phi\left(\frac12 \eta\right)\le (\text{diam}\, Q'')^2.$$
 This means that if two pairs of close cubes $Q\sim Q'$, $\tilde{Q}\sim\tilde{Q}'$ are such that $R(Q+Q')$ and $R(\tilde{Q}+\tilde{Q}')$ intersect, they must have a similar diameter. The same holds for the dilates $(1+\alpha) R(Q+Q')$ for some small $\alpha$, by the same argument. If the diameters are in a finite number, the cubes are also in a finite number since their centers verify 
 $$|c(Q'')-c(\tilde{Q}'')|\le|c(Q'')-\eta|+|c(\tilde{Q}'')-\eta|\le\text{diam}\, Q''.$$
 We are thus in the same situation as in Killip-Visan [Proof of Prop. 4.24], and Lemma \ref{lem:triangle-ineq-improved} follows.
\end{proof} 

As a final ingredient in the proof of Proposition \ref{steintomref} we cite a bound of sums of local norms over dyadic cubes in terms a global norm. For a simple proof we refer to \cite[Proof of Thm. A.1]{Tao-09}; see also \cite[Thm. 1.3]{BegVar-07} and \cite[Proof of Prop. 4.24]{KilVis-book}.

\begin{lemma}\label{lem:uniform-bound-sum-tao}
Let $d\geq 1$ and $1<\mu<\nu$. Then there is a constant $C_{d,\mu,\nu}$ such that for all $f\in L^\mu(\R^d)$,
$$
\left( \sum_{Q\in\mathcal D} |Q|^{-\nu/\mu'} \|f\|_{L^1(Q)}^\nu \right)^{1/\nu} \leq C_{d,\mu,\nu} \|f\|_{L^\mu(\R^d)} \,.
$$
\end{lemma}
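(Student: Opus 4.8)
The final statement to prove is Lemma~\ref{lem:uniform-bound-sum-tao}: for $1<\mu<\nu$ and $f\in L^\mu(\R^d)$,
\[
\left( \sum_{Q\in\mathcal D} |Q|^{-\nu/\mu'} \|f\|_{L^1(Q)}^\nu \right)^{1/\nu} \leq C_{d,\mu,\nu} \|f\|_{L^\mu(\R^d)} \,.
\]

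\medskip

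The plan is to recognize this as a statement about the boundedness of a sublinear operator between Lebesgue spaces, and to prove it by real interpolation (Marcinkiewicz), after identifying the two endpoint estimates. First I would, for each $j\in\Z$, group the dyadic cubes by their common side length $2^j$ and introduce the ``dyadic averaging'' operators: for a cube $Q$ of side length $2^j$, by H\"older $|Q|^{-1/\mu'}\|f\|_{L^1(Q)} = |Q|^{-1}\|f\|_{L^1(Q)}\cdot|Q|^{1/\mu}= \left(\fint_Q |f|\right)|Q|^{1/\mu} \le \left(\fint_Q|f|^\mu\right)^{1/\mu}|Q|^{1/\mu} = \|f\|_{L^\mu(Q)}$. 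Hence the left side is dominated by $\big(\sum_Q \|f\|_{L^\mu(Q)}^\nu\big)^{1/\nu}$, and since $\nu>\mu$ and the cubes of a fixed generation partition $\R^d$, for that fixed generation $\sum_{|Q|=2^{jd}}\|f\|_{L^\mu(Q)}^\nu \le \big(\sum_{|Q|=2^{jd}}\|f\|_{L^\mu(Q)}^\mu\big)^{\nu/\mu} = \|f\|_{L^\mu(\R^d)}^\nu$. This reduces matters to summing over $j\in\Z$, where naive summation diverges; the real content is a gain as $j\to\pm\infty$.

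\medskip

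The efficient route is to define the sublinear maximal-type operator $(Mf)_Q := |Q|^{-1/\mu'}\|f\|_{L^1(Q)}$ indexed by $Q\in\mathcal D$, viewed as a map from functions on $\R^d$ to functions (sequences) on the countable set $\mathcal D$ equipped with counting measure, and to establish the two endpoint bounds $M: L^1(\R^d)\to \ell^{1,\infty}(\mathcal D)$ and $M: L^\infty(\R^d)\to \ell^{\infty}(\mathcal D)$ weighted appropriately — more precisely one works with the rescaled quantities so that the target space is a fixed $\ell^p$. Concretely: for $f\in L^1$, the number of dyadic cubes $Q$ with $|Q|^{-1/\mu'}\|f\|_{L^1(Q)} > \lambda$ is controlled, because for each generation $j$ the cubes contributing must have $\|f\|_{L^1(Q)} > \lambda 2^{jd/\mu'}$, and summing the mass over a generation gives $\|f\|_{L^1(\R^d)} \ge (\#\text{such }Q)\cdot \lambda 2^{jd/\mu'}$; summing the resulting geometric-type bounds over $j$ yields a weak-type $(1,1)$-style estimate in the $\ell$-variable. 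For $f\in L^\infty$ one has the pointwise bound $|Q|^{-1/\mu'}\|f\|_{L^1(Q)} \le |Q|^{1/\mu}\|f\|_\infty$, which is not bounded in $Q$, so one must instead truncate/normalize — i.e. prove the estimate directly for the rescaled operator $f\mapsto (|Q|^{-1/\mu'}\|f\|_{L^1(Q)})_Q$ mapped into a space with a built-in $|Q|$-weight, and interpolate between $L^1\to\ell^{1,\infty}(w_1)$ and $L^\infty\to\ell^{\infty}(w_\infty)$ with weights chosen so the interpolated target is exactly the unweighted $\ell^\nu(\mathcal D)$ at the value $1/\mu = \theta/1 + (1-\theta)/\infty$, i.e. $\theta = 1/\mu$, and $1/\nu$ the correspondingly interpolated exponent on the sequence side. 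The Marcinkiewicz interpolation theorem for sublinear operators then gives the desired $L^\mu(\R^d)\to\ell^\nu(\mathcal D)$ bound for the stated range $1<\mu<\nu$, with a constant depending only on $d,\mu,\nu$.

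\medskip

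The main obstacle is bookkeeping the three scaling exponents ($\mu$ on the source, $\nu$ on the target sequence space, and the cube-volume weight $|Q|^{-1/\mu'}$) so that they match up correctly under interpolation — in particular verifying that the endpoint weak-type bound at $\mu=1$ really holds with the weight $|Q|^{-1/\mu'}=|Q|^0=1$ collapsing correctly, and that the $L^\infty$ endpoint is captured after the right normalization. An alternative, more self-contained route that avoids abstract interpolation is to argue directly: split $f = f\cdot\1_{\{|f|>t\}} + f\cdot\1_{\{|f|\le t\}}$ with a threshold $t$ depending on the generation $j$ (or on the cube), bound the contribution of the large part using $\|f\|_{L^1}$-type sums and of the small part using $\|f\|_{L^\mu}$-type sums, and optimize in $t$; this is precisely how the cited proof in \cite[Proof of Thm.~A.1]{Tao-09} proceeds. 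Either way the estimate is elementary once the exponent arithmetic $1<\mu<\nu$ (equivalently $0<1/\nu<1/\mu<1$) is pinned down, and no harmonic analysis beyond Hölder, the layer-cake formula, and summation of geometric series is needed.
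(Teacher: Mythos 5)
Your preliminary observations are sound: H\"older gives $|Q|^{-1/\mu'}\|f\|_{L^1(Q)}\le\|f\|_{L^\mu(Q)}$, and at each fixed generation the $\ell^\nu$ sum is controlled by $\|f\|_{L^\mu}^\nu$; the entire content is the summation over generations, where $\nu>\mu$ must supply the gain. However, the interpolation scheme you propose to capture this gain has a concrete flaw. The weak-type $L^1(\R^d)\to\ell^{1,\infty}(\mathcal D)$ endpoint you assert is \emph{false}. For each generation $j$ your count bound is $\#_j\le\|f\|_{L^1}\lambda^{-1}2^{-jd/\mu'}$, and since $1/\mu'>0$ the series $\sum_{j\in\Z}2^{-jd/\mu'}$ diverges as $j\to-\infty$: an $L^1$ bound on $f$ caps the large-cube generations but gives no control over the arbitrarily fine ones. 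Indeed, $f(x)=|x|^{-\alpha}\1_{|x|\le1}$ with $d/\mu<\alpha<d$ lies in $L^1$ yet for any fixed $\lambda$ the cubes $[0,2^j]^d$ satisfy $|Q|^{-1/\mu'}\|f\|_{L^1(Q)}>\lambda$ for all $j$ sufficiently negative, so the distribution count is already infinite. Moreover, even granting an $\ell^{1,\infty}$ endpoint, interpolating $L^1\to\ell^{1,\infty}$ against $L^\infty\to\ell^\infty$ at $\theta=1/\mu$ would land on $L^\mu\to\ell^\mu$, not $\ell^\nu$ --- the desired exponent never enters. The correct endpoint is of the form $L^1\to\ell^{\beta,\infty}$ on the weighted sequence space $\ell(\mathcal D,|Q|^\beta\,d\#)$ with $\beta=\nu/\mu>1$; this $\beta$ is precisely where the strict inequality $\nu>\mu$ is consumed, and your sketch never introduces it.

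Your second route is closer to what actually works, but the description is imprecise in a way that matters. The threshold $t$ must depend on \emph{both} the level $\lambda$ in the layer-cake and the generation $j$: taking $t_j=\tfrac12\lambda 2^{-jd/\mu}$, the bounded piece $h=f\1_{|f|\le t_j}$ contributes \emph{no} cubes at generation $j$ above level $\lambda/2$ (not a separate ``$L^\mu$-type sum''), while the tall piece $g=f\1_{|f|>t_j}$ contributes at most $2\lambda^{-1}2^{-jd/\mu'}\int_{\{|f|>t_j\}}|f|$ cubes. Summing over $j$ and using Fubini plus $1+\mu/\mu'=\mu$ yields the weak-type bound $\#\{Q:|Q|^{-1/\mu'}\|f\|_{L^1(Q)}>\lambda\}\lesssim\lambda^{-\mu}\|f\|_{L^\mu}^\mu$; the strong $\ell^\nu$ estimate then follows from the embedding $\ell^{\mu,\infty}(\mathcal D)\hookrightarrow\ell^\nu(\mathcal D)$, valid for $\nu>\mu$ on a discrete measure space. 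Both the generation-dependent threshold and the final $\ell^{\mu,\infty}\hookrightarrow\ell^\nu$ step are missing from your sketch, and neither is a routine bookkeeping detail --- without the former the sum over $j$ diverges, and without the latter you never reach the claimed $\ell^\nu$ norm.
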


After these preliminaries we are in position to give the

\begin{proof}[Proof of Proposition \ref{stref}]
We follow rather closely Tao's arguments \cite[Proof of Thm. A.1]{Tao-09}; see also Killip-Visan \cite[proof of Prop. 4.24]{KilVis-book}. We observe that for any $\xi,\xi'\in\R^d$ there is a pair of cubes $Q,Q'\in\mathcal D$ with $Q\sim Q'$ such that $\xi\in Q$ and $\xi'\in Q'$. Consequently, if we let
$$
\Psi(t,x) := \left( e^{-it\Phi(-i\nabla)}\psi\right)(x),
$$
we find
$$
\Psi^2=\sum_{Q\sim Q'} \Psi_Q \Psi_{Q'} \,.
$$
Therefore Lemma \ref{lem:triangle-ineq-improved} yields
\begin{align}\label{eq:strefproof}
\left\| \Psi \right\|_{L^q_{t,x}}^2 & = \left\| \Psi^2 \right\|_{L^{q/2}_{t,x}}
= \left\| \sum_{Q\sim Q'} \Psi_Q \Psi_{Q'} \right\|_{L^{q/2}_{t,x}} \leq C^{1/q^*} \left( \sum_{Q\sim Q'} \left\|\Psi_Q \Psi_{Q'} \right\|_{L^{q/2}_{t,x}}^{q^*} \right)^{1/q^*} \notag \\
& \leq C^{1/q^*} \sup_{Q\sim Q'} \left\|\Psi_Q \Psi_{Q'} \right\|_{L^{q/2}_{t,x}}^{q^*-r} \left( \sum_{Q\sim Q'} \left\|\Psi_Q \Psi_{Q'} \right\|_{L^{q/2}_{t,x}}^r \right)^{1/q^*}
\end{align}
for every $r\leq q^*$. We will later choose $r>1$. We now estimate $\left\|\Psi_Q \Psi_{Q'} \right\|_{L^{q/2}_{t,x}}$ in two different ways. They both rely on the bilinear estimate from Theorem \ref{thm:bilinear-estimate}. Since
$$
\|\psi_Q\|_{L^2_x}\leq \|\psi\|_{L^2_x},
$$
the bilinear estimate \eqref{eq:bilinear-estimate} implies that for all $\frac{N+2}{N}<p<\frac{N+1}{N-1}$ and all $Q\sim Q'$,
\begin{align*}
\left\|\Psi_Q \Psi_{Q'} \right\|_{L^{q/2}_{t,x}}
& \leq \left(|Q|^{-1} \left\|\Psi_Q \Psi_{Q'} \right\|_{L^\infty_{t,x}}\right)^{1-\frac{2p}{q}} \left( |Q|^{q/(2p)-1} \left\|\Psi_Q \Psi_{Q'} \right\|_{L^{p}_{t,x}} \right)^{\frac{2p}{q}}\nonumber \\
& \leq C_{N,p,\epsilon}^{\frac{2p}{q}} \left( |Q|^{-1/2} \left\|\Psi_Q\right\|_{L^{\ii}_{t,x}}\right)^{1-\frac{2p}{q}} \left(|Q'|^{-1/2} \left\| \Psi_{Q'}\right\|_{L^{\ii}_{t,x}}\right)^{1-\frac{2p}{q}} \|\psi\|_{L^2_x}^{\frac{4p}{q}} \,.
 \end{align*}
This bound implies
\begin{equation}
\label{eq:Linfty-estimate-bilinear}
\sup_{Q\sim Q'} \left\|\Psi_Q \Psi_{Q'} \right\|_{L^{q/2}_{t,x}}
\leq C_{N,p,\epsilon}^{\frac{2p}{q}}  \left( \sup_{Q''\in\mathcal D}  |Q''|^{-1/2} \left\|\Psi_{Q''}\right\|_{L^{\ii}_{t,x}}\right)^{2(1-\frac{2p}{q})} \|\psi\|_{L^2_x}^{\frac{4p}{q}} \,.
\end{equation}

On the other hand, one can also interpolate the bilinear estimate \eqref{eq:bilinear-estimate} with the trivial estimate
$$
\|\Psi_Q \Psi_{Q'}\|_{L^\ii_{t,x}}\leq (2\pi)^{-d} \|\hat\psi_Q\|_{L^1_\xi}\|\hat\psi_{Q'}\|_{L^1_\xi}
$$
to obtain 
\begin{equation*}
\|\Psi_Q \Psi_{Q'}\|_{L^{q/2}_{t,x}}\leq C_{N,\epsilon}' |Q|^{1-\frac2s}\|\hat\psi_Q\|_{L^s_\xi}\|\hat\psi_{Q'}\|_{L^s_\xi}
\end{equation*}
for some $1<s<2$ (whose value is not important here). This implies that 
\begin{align}\label{eq:second-estimate-Lq-norm}
\sum_{Q\sim Q'}\left\|\Psi_Q \Psi_{Q'} \right\|_{L^{q/2}_{t,x}}^r
& \leq C\sum_{Q\sim Q'}\left[|Q|^{1-\frac2s}\|\hat\psi_Q\|_{L^s_\xi}\|\hat\psi_{Q'}\|_{L^s_\xi}\right]^r 
\leq  C\sum_{Q\sim Q'}\left[|Q|^{1-\frac2s}\|\hat\psi_Q\|_{L^s_\xi}^2\right]^r \notag \\
& = C' \sum_{Q}\left[|Q|^{1-\frac2s}\|\hat\psi_Q\|_{L^s_\xi}^2\right]^r \,.
\end{align}
In the last equality we used the fact that the number of $Q'\in\mathcal D$ satisfying $Q'\sim Q$ is finite and independent of $Q$. Finally, according to Lemma \ref{lem:uniform-bound-sum-tao} (with $f=|\hat\psi|^s$, $\mu=2/s$ and $\nu=2r/s$; note $1<\mu<\nu$ since $s<2$ and $r>1$), the right side of \eqref{eq:second-estimate-Lq-norm} is bounded by a constant times $\|\hat\psi\|_{L^2_\xi}^{2r} = \|\psi\|_{L^2_x}^{2r}$. Combining this with \eqref{eq:strefproof} and \eqref{eq:Linfty-estimate-bilinear} completes the proof of Proposition \ref{stref}.
\end{proof}


\end{document}